\documentclass[reqno, a4paper]{amsart}
\usepackage{amssymb, mathdots}
\usepackage{mathabx}
\usepackage{mathrsfs}
\usepackage[utf8]{inputenc}
\usepackage{amsmath,  graphicx, tensor}
%%%%%%%
\usepackage{tikz}
\usetikzlibrary{matrix, arrows.meta}
\usetikzlibrary{cd}
%%%%%%%%%%%%%%

%%%%%%%% 
\usepackage{enumerate}
\usepackage{hyperref}
%%%%%%%%%
\DeclareSymbolFont{bbold}{U}{bbold}{m}{n}
\DeclareSymbolFontAlphabet{\mathbbold}{bbold}
%

%\topmargin -3.2cm 
%MACRO F-R BILDER 
%
%\def\picture#1by#2(#3){
%\vbox to #2 {
%    \hrule width #1 height 0pt depth 0pt \vfill \special{picture #3}}
%}
%
%\parindent 8pt
%\parskip 6pt
%%%%%%%%%%%%%%%%%%%%%%%%%%%%%%%%%%%%%%%%%
%%%%%%%%

\def\qmod#1#2{{\hbox{}^{\displaystyle{#1}}}\!\big/\!\hbox{}_{
\displaystyle{#2}}}

%restrictia la o submultime

%derivata intr-un punct 

 %%%% pre-superscript  
 \def\psp#1#2%
  {\mathop{}%
   \mathopen{\vphantom{#2}}^{#1}%
   \kern-\scriptspace%
    \hskip -0.3mm{#2}} 
%%%%%%%
 %%%% pre-subscript  
 \def\psb#1#2%
  {\mathop{}%
   \mathopen{\vphantom{#2}}_{#1}%
   \kern-\scriptspace%
    \hskip -0.0mm{#2}} 
%%%%%%%
 %%%% pre super and subscript 
 \def\pscr#1#2#3%
  {\mathop{}%
   \mathopen{\vphantom{#3}}^{#1}_{#2}%
   \kern-\scriptspace%
    \hskip -0.3mm{#3}} 
%%%%%%%
%%%%% E with complex sub and superscripts

%

%%%%%%%%%%%%%%%%%%

% BLACKBOARD BOLD:

\def\C{{\mathbb C}}

\def\H{{\mathbb H}}

\def\N{{\mathbb N}}

\def\P{{\mathbb P}}

\def\R{{\mathbb R}}

\def\Z{{\mathbb Z}}

     %  black box at right of page

%sageata urmarind un text scris deasupra

\def\textmap#1{\mathop{\vbox{\ialign{
                                  ##\crcr
      ${\scriptstyle\hfil\;\;#1\;\;\hfil}$\crcr
      \noalign{\kern 1pt\nointerlineskip}
      \rightarrowfill\crcr}}\;}}
%%%%%%%%%%%%%%%
\def\bigtextmap#1{\mathop{\vbox{\ialign{
                                  ##\crcr
      ${\hfil\;\;#1\;\;\hfil}$\crcr
      \noalign{\kern 1pt\nointerlineskip}
      \rightarrowfill\crcr}}\;}}
      
%%%%%%%%%%%%%

\newcommand{\cal}{\mathcal}
\def\textlmap#1{\mathop{\vbox{\ialign{
                                  ##\crcr
      ${\scriptstyle\hfil\;\;#1\;\;\hfil}$\crcr
      \noalign{\kern-1pt\nointerlineskip}
      \leftarrowfill\crcr}}\;}}

\def\fg{{\mathfrak f}}

\def\jg{{\mathfrak j}}

\def\lg{{\mathfrak l}}
\def\mg{{\mathfrak m}}
\def\ng{{\mathfrak n}}

\def\sg{{\mathfrak s}}
\def\tg{{\mathfrak t}}

\def\Jg{{\mathfrak J}}

\def\Ng{{\mathfrak N}}

\def\Qg{{\mathfrak Q}}

\def\Wg{{\mathfrak W}}

\theoremstyle{remark}
\newtheorem{ex}{Example}[section]

\newtheorem{sz}{Satz}[section]
\theoremstyle{plain}
\newtheorem{thry}[sz]{Theorem}
\newtheorem{pr}[sz]{Proposition}
\newtheorem{co}[sz]{Corollary}
\newtheorem{dt}[sz]{Definition}
\newtheorem{lm}[sz]{Lemma}
\newtheorem{re}[sz]{Remark}

%%%%%%%%%%%%%%%%%%%%%%%%%%%%%%%%%%%%%%%%%%%%%%%%%%%%%%%%%%%
%%%%%%%%%%%%%

\def\End{\mathrm {End}}
\def\Aut{\mathrm {Aut}}
\def\Spin{\mathrm {Spin}}

\def\SU{\mathrm {SU}}
\def\SO{\mathrm {SO}}

\def\GL{\mathrm {GL}}

\def\Pic{\mathrm {Pic}}

\def\Hom{\mathrm{Hom}}

\def\id{ \mathrm{id}}
\def\im{\mathrm{im}}

\def\Ah{\mathrm{Ah}}
\newcommand\smvee{{\hskip -0.1ex \raise 0.2ex\hbox{$\scriptscriptstyle\vee$}}\hskip -0,3ex}

%%%%%%%%%%%%%%%%%%%%%%%%%%
%%%%%%%%%%%%%%%%%%%
%

%%%%%%%%%%%%%%
% teste si deja defini

% teste si deja defini

\def\edf{\coloneq}

\begin{document}

\title{Real structures on primary Hopf surfaces}

\author{Zahraa KHALED}
\address{Aix Marseille Univ, CNRS, I2M, Marseille, France.}
\email{zahraa.khaled@univ-amu.fr}
\begin{abstract}
The first goal of this article is to give a complete classification (up to Real biholomorphisms) of Real primary Hopf surfaces $(H,s)$, and, for any such pair,  to describe in detail the following naturally associated objects : the group $\mathrm{Aut}_h(H,s)$ of Real automorphisms, the   Real Picard group $(\mathrm{Pic}(H),\hat s^*)$, and the Picard group of Real holomorphic line bundles $\mathrm{Pic}_{\mathbb{R}}(H)$. 

Our second goal:   the classification of Real primary Hopf surfaces up to equivariant diffeomorphisms, which  will allow us to describe explicitly in each case the real locus  $H(\mathbb{R})=H^s$ and the quotient $H/\langle s\rangle$.   \end{abstract}

 \subjclass[2020]{32J15, 32M18}

\maketitle

\tableofcontents

 \section*{Acknowledgements} 
 
I am indebted to my PhD advisor Andrei Teleman for suggesting me the problems treated in this article and for  many helpful ideas. I also thank Karl Oeljeklaus and Anne Pichon for their advices and their interest in my results. 

\section{Introduction}\label{intro}

Let $X$ be a complex manifold, and let $J$ be the (integrable) almost holomorphic structure on its underlying differentiable manifold $X$ defining its complex structure. We will denote by $\bar X$ the complex manifold defined by $-J$. Note that the data of an anti-holomorphic isomorphism $X\to X$ is  equivalent to the data of a biholomorphism $X\to \bar X$. 

A Real structure (in the sense of Atiyah) on $X$  is an anti-holomorphic involution of $X$ \cite{At}, \cite{S}, \cite{GH}. A Real complex manifold is a pair $(X,s)$ consisting of a complex manifold and a Real structure on it.

The theory of Real complex manifolds originates from algebraic geometry, see \cite[section 1]{At}: a smooth projective variety $X\subset \P^n_\C$ defined by a system of homogeneous polynomial equations with real coefficients has a natural Real structure induced by the conjugation $\P^n_\C\to\P^n_\C$. An ample literature is dedicated to this theory in  algebraic geometric framework (see for instance \cite{S}, \cite{GH}). On the other hand not much is known on the classification of Real structures on non-algebraic manifolds. An important contribution in this direction is Paola Frediani's article \cite{Fr}, which is dedicated to the (holomorphic and topological) classification of Real  Kodaira surfaces. In this article we treat similar problems but for primary Hopf surfaces. 
\vspace{2mm}

The real locus of a Real complex manifold $(X,s)$ is just the fixed point locus $X^s$ (also denoted $X(\R)$ if $s$ has been fixed)  of its Real structure. 

Let $(X,s)$, $(Y,\sigma)$ be Real complex manifolds.  A biholomorphism $f:X\to Y$ is called Real (or compatible with the Real structures) if $\sigma\circ f=f\circ s$. The fundamental problem of the theory  is the classification of Real complex manifolds up to Real biholomorphisms.   

The group of real biholomorphisms of a Real complex manifold  $(X,s)$ is the subgroup 
$$\Aut(X,s)\coloneq \{f:X\to X|\ f \hbox{ biholomorphism, }f\circ s=s\circ f\}$$
 of the biholomorphism  group  $\Aut_h(X)$. \vspace{2mm}
 
 Let $(M,s)$ be a differentiable manifold endowed with an involution $s$ and $E$ be a complex vector bundle on $M$. We recall \cite[Section 1]{At} that   
 \begin{dt}\label{RealBdlsmooth}
 A Real structure on $E$ is a fiberwise anti-linear $s$-covering isomorphic involution $\varphi:E\to E$. A Real bundle on  $(M,s)$  is pair $(E,\phi)$ consisting of a complex bundle $E$ on $M$ and a Real structure $\phi$ on $E$.	
 \end{dt}

  Let $(X,s)$ be a Real complex manifold. 
 
 \begin{dt} \label{RealBdlsHol} A Real holomorphic bundle on $X$ is a pair $(E,\phi)$, where $E$ is a holomorphic bundle on $X$ and $\phi$ an anti-holomorphic Real structure on $E$. 
 \end{dt}

 Let $E$  be a holomorphic bundle of rank $r$ on $X$. The pull-back $s^*(\bar E)$ has a natural structure of a holomorphic bundle on $X$ (see for instance \cite[section 1.2]{OT}): it is just the pull-back of $\bar E$, regarded as a holomorphic bundle on $\bar X$, via the holomorphic map $s:X\to \bar X$.  The map $[E]\mapsto [s^*(\bar E)]$ defines a natural involution on the set of isomorphism classes of holomorphic bundles on $X$. In particular, for $r=1$, we obtain an involution 
 $$\bar s^*:\Pic(X)\to \Pic(X),\ \bar s^*([L])\coloneq [s^*(\bar L)]$$
   on the Picard group of $X$; this involution  is an anti-holomorphic group isomorphism, so $(\Pic(X),\bar s^*)$ becomes a Real complex Lie group.
 
 The definitions above allow us to associate to a compact, connected Real complex manifold two natural invariants constructed using holomorphic line bundles: 
 \begin{itemize}
 	\item The group $\Pic_\R(X)$ of isomorphism classes of Real holomorphic line bundles on $X$.
 	\item The Real complex Lie group $(\Pic(X),\bar s^*)$. \end{itemize}  
 
 Note that one has an obvious comparison real Lie group morphism 
 $$\Pic_\R(X)\to \Pic(X)(\R),\ [L,\phi]\mapsto [L],$$
  which is always injective and is an isomorphism when $X(\R)\ne\emptyset$.\\

  The goals of this article are:
  \begin{enumerate}[(G1)]
  \item \label{G1} To give complete classification of Real primary Hopf surfaces (up to Real biholomorphisms) with an explicit description of the set of isomorphism classes. 
  \item To describe explicitly, for any Real primary Hopf surface $(H,s)$, the following naturally associated objects: 
  \begin{enumerate}
  \item its automorphism group $\Aut(H,s)\subset \Aut_h(H)$.
  \item its Real Picard group $(\Pic(H),\bar s^*)$ of holomorphic line bundles, and   its Picard group $\Pic_\R(H)$ of Real holomorphic line bundles.
    \end{enumerate}
  \item  To classify differential-topologically  the Real primary Hopf surfaces, and, for any  Real primary Hopf surface $(H,s)$, to describe explicitly  the fixed point (real) locus $H^s$ and the quotient $H/\langle s\rangle$.
 \end{enumerate}
 \vspace{2mm} 
For (G\ref{G1}), recall first that \cite{BHPV}:
\begin{dt}
A primary Hopf surface is a compact complex surface $H$ whose universal covering is biholomorphic to $W\coloneq \C^2\setminus\{0\}$, and whose fundamental group is isomorphic to $\Z$.
\end{dt}
From this definition, it follows that any primary Hopf surface is biholomorphic to a quotient of the form 
$$H_f={\qmod{W}{\langle f\rangle}} \ ,$$
where $\langle f\rangle$ is the cyclic group generated by a biholomorphism $f\in \Aut_h(W).$ 
By a fundamental theorem of Kodaira \cite{Ko1}, it follows that any primary Hopf surface is biholomorphic to $ {W}/{\langle f\rangle}$ where $f$ is a biholomorphism of the form

$$f(z,w)=(\alpha z+\lambda w^n,\beta w)$$

where 
$$0<\vert\alpha\vert \leq\vert\beta\vert<1 , \ n\in \N  ,\ \lambda(\alpha-\beta^n)=0.   $$
If the coefficients of $f$ are real, the standard conjugation $c:W\to W$ will obviously descend to a Real structure on $H_f$. We will see that there exists interesting classes of Real primary Hopf surfaces which are  not of this type. Moreover, there exists  Real primary Hopf surfaces defined by holomorphic contractions $f$ whose coefficients are not real.

Note first that Kodaira's  theorem does not give a precise classification of primary Hopf surfaces, because it is not clear under which conditions the surfaces associated with two 4-tuples $(\alpha,\beta,\lambda,n)$, $(\alpha',\beta',\lambda',n')$ as above are biholomorphic.
 Following  \cite{We} we introduce five classes of holomorphic contractions:
\begin{equation}\label{WehlerCLasses}
\begin{split}
{IV}\coloneq& \left\{f:W\to W|\ f\begin{pmatrix}
z\\w 	
\end{pmatrix}
=\begin{pmatrix}\alpha z\\ \alpha w\end{pmatrix}\vline\ 0< |\alpha| <1 \right\},
\\	
 {III}\coloneq& \left\{f:W\to W|\ f\begin{pmatrix}
z\\w 	
\end{pmatrix}=\begin{pmatrix}\delta^r z\\ \delta w\end{pmatrix}\vline\ \ r\in \N_{\geq 2},\ 0<|\delta|<1  \right\},
\\
 {II}_a\coloneq &\left\{f:W\to W|\ f\begin{pmatrix}
z\\w 	
\end{pmatrix}=\begin{pmatrix}\delta^r z+w^r\\ \delta w\end{pmatrix}\vline\  r\in \N_{\geq 2},\ 0<|\delta|<1  \right\},
\\
 {II}_b\coloneq &\left\{f:W\to W|\ f\begin{pmatrix}
z\\w 	
\end{pmatrix}=\begin{pmatrix}\alpha z+w\\ \alpha w\end{pmatrix}\vline\   \ 0<|\alpha|<1  \right\},
\\
 {II}_c\coloneq &\left\{f:W\to W|\ f\begin{pmatrix}
z\\w 	
\end{pmatrix}=\begin{pmatrix}\alpha z\\ \delta w\end{pmatrix}\vline\  \begin{array}{c} 0<|\alpha|<1\\ 0< |\delta|<1\end{array},\ \alpha\ne \delta^r\, \forall r\in\N  \right\}.
\end{split}
\end{equation}
The map  
\begin{equation}\label{maptoisotypes} IV\cup III\cup II_a\cup II_b\cup II_c\ni f\mapsto [H_f] 
\end{equation}
which assigns to a holomorphic contraction $f$ the biholomorphism class of the corresponding Hopf surface $H_f$ is surjective, but {\it not} injective. Indeed, the contractions   $f$, $f'\in II_c$ associated with the pairs $(\alpha,\delta)$, $(\alpha',\delta')=(\delta,\alpha)$ are biholomorphic. Note that this exception to injectivity is not mentioned in \cite{We}. In fact in \cite{We} the class $II_c$ is defined by imposing the additional condition $|\alpha|<|\delta|$. Unfortunately with this restrictive definition of   $II_c$ {\it one loses the surjectivity of the map} (\ref{maptoisotypes}), because biholomorphism types of Hopf surfaces associated with pairs $(\alpha,\delta)$ satisfying 
\begin{equation}\label{subclIIc}
0<|\alpha|=|\delta|<1,\ \alpha\ne \delta^r\, \forall r\in\N
\end{equation}
will not belong to its image. This remark is important for our purposes because precisely in the subclass of $II_c$ defined by (\ref{subclIIc}) -- the subclass which is omitted in \cite{We} -- we will find contractions $f$ for which $H_f$ admits Real structures although the coefficients of $f$ are {\it not} real.   

Our first step in the classification of Real structures on primary Hopf surfaces is to divide them in two classes:   a  Real structure $\phi$ on $H_f$ will be called {\it even} ({\it odd}) if it admits a lift $\hat \phi:W\to W$ with   $\phi^2=\id_W$ (respectively   $\phi^2=f$). The even (odd) Real structures are classified by Theorem \ref{ClassEven} (respectively Theorem \ref{ClassOdd}).
\vspace{2mm}
   
Concerning (G2) we will give explicit descriptions of the automorphism group $\Aut(X,s)$ of all Real primary Hopf surface. For instance, when $f\in IV$ with negative coefficient $\alpha$ we obtain $\Aut(H_f,\sg_f)\simeq\Spin^c(3)$, where $\sg_f$ denotes  the canonical odd Real structure  on $H_f$ (see Corollary \ref{Spinc(3)}).

Our results concerning the Real complex group $(\Pic(X),\bar s^*)$  and the group $\Pic_\R(X)$  of a Real primary Hopf surface $(X,s)$ are (see Proposition \ref{Pic(X)R}): 
\begin{enumerate}
\item $(\Pic(X),\bar s^*)$ is 	always isomorphic to $(\C^*,\bar{\ } )$.
\item The canonical monomorphism $\Pic_\R(X)\to \Pic(X)(\R)=\R^*$ is an isomorphism if $(X,s)$ is even and identifies $\Pic_\R(X)$ with $\R_{>0}$ if $(X,s)$ odd. 
\end{enumerate}
Our results for  the goal (G3) give a complete differential topological classification of Real primary Hopf surfaces (see Theorems \ref{ClassDiffEven}, \ref{ClassDiffOdd} and Remark \ref{mu-mu0}).  The final result is: 
\begin{itemize}
\item 	Any even Real primary Hopf surface is equivariantly diffeomorphic to either 
$$\big(S^1\times  S^3, (\zeta,(u,v))\mapsto (\zeta,(\bar u,\bar v))\big),$$
 or 
 $$\big(S^1\times  S^3, (\zeta,(u,v))\mapsto (\zeta,(\bar u,\zeta\bar v))\big).$$
\item Any odd Real primary Hopf surface is equivariantly diffeomorphic to  
$$\big (S^1\times  S^3, (\zeta,Z)\mapsto (-\zeta,Z)\big).$$
\end{itemize}

Taking into account the results of section \ref{RealLocusSection}, this shows that the equivariant differential topological type of a Real primary Hopf surface is determined by the type (even or odd) and the orientability of the real locus.

The main idea in the proof of this classification result  is:   for a contraction $f\in IV\cup III\cup II_a\cup II_b\cup II_c$ with Real coefficients and positive diagonal coefficients, we  construct 1-parameter group of diffeomorphisms  $(f^t)_{t\in\R}$  of $W$ acting freely on $W$ such that $f=f^1$.   Moreover we also construct a compact 3-dimensional submanifold $\Sigma\subset W$ which is transversal to the orbits of this group and can be identified to $S^3$  via a diffeomeorphism which commutes with the conjugation and the involutions $(z,w)\mapsto (\pm z,\pm w)$.  
\vspace{1mm} 

Finally will show that: \begin{itemize}
	\item The real locus $X^s$ of an even Real primary Hopf surface $(X,s)$ is either a torus, or a Klein bottle, whereas the real locus of an odd Real primary Hopf surface is always empty.
	\item     The quotient $X/\langle s\rangle$ associated with a Real primary Hopf surface $(X,s)$ is always homeomorphic to $S^1\times S^3$, and we describe the position of the fixed point locus $X^s$ in this quotient.
	
\end{itemize}

Note that, by the equivariant slice theorem,  for any Real complex surface $(X,s)$, the quotient $X/\langle s\rangle$ is a topological 4-manifold.

\section{Holomorphic and anti-holomorphic automorphisms}\label{sect1}

A fundamental role in this article will be played by the results of Wehler on the classification of primary Hopf surfaces and their automorphism group. In this section we review these results and we continue with the classification of the anti-holomorphic automorphisms of primary Hopf surfaces.

\subsection{Wehler's classification of primary Hopf surfaces}\label{WehlClass}

A precise classification of primary Hopf surfaces -- with explicit descriptions of the automorphism groups -- has been given by Wehler \cite{We}. His result can be formulated as follows: 
\begin{thry}\label{Wehl}
Consider the sets ${IV}$, ${III}$, ${II_a}$,  ${II_b}$, ${II_c}\subset\Aut_h(W)$ defined in (\ref{WehlerCLasses}). 

\begin{enumerate}

\item  For every primary Hopf surface $H$ there exists  
$f\in  {IV}\cup  {III}\cup  {II_a}\cup  {II_b}\cup  {II_c}$
such that $H\simeq H_f$.   

\item \label{no-inj} For $f$, $f'\in {IV}\cup  {III}\cup  {II_a}\cup  {II_b}\cup  {II_c}$ we have $H_f\simeq H_{f'}$ if and only if either $f=f'$, or $f$ and $f'$ belong to $II_c$, and the corresponding coefficients $\alpha$, $\delta$, $\alpha'$, $\delta'$ satisfy   $\alpha'=\delta$, $\delta'=\alpha$.
\item  For any  $f$ the group  $\Aut_h(W)^f$ of holomorphic automorphisms of $W$  commuting with $f$ is given by the table below:

$$  \begin{array}  {|c|c|}
\hline   & \\ [-0.8em]
\hbox{The class of } f &  \Aut_h(W)^f   
 \\ [0.1em]
 \hline  & \\ [-0.8em]
IV & \GL(2,\C)  
\\ [0.2em]
 \hline  & \\ [-0.8em]
III &  \left\{\begin{pmatrix}
z\\w 	
\end{pmatrix}\mapsto\ \begin{pmatrix}az+bw^r\\dw\end{pmatrix}\vline\  a\in\C^*,\ d\in\C^*, b\in \C\right\}    
\\ [0.8em]
 \hline  & \\ [-0.8em]
II_a  &\left\{\begin{pmatrix}
z\\w 	
\end{pmatrix}\mapsto\ \begin{pmatrix}a^rz+bw^r\\aw\end{pmatrix}\vline\  a\in \C^*, b\in \C\right\}     
\\ [0.8em]
 \hline  & \\ [-0.8em]
II_b &  \left\{\begin{pmatrix}
z\\w 	
\end{pmatrix}\mapsto\ \begin{pmatrix}az+bw\\aw\end{pmatrix}\vline\  a\in \C^*, b\in  \C\right\}   
\\ [0.8em]
\hline  & \\ [-0.8em]
II_c & \left\{\begin{pmatrix}
z\\w 	
\end{pmatrix}\mapsto\ \begin{pmatrix}az\\dw\end{pmatrix}\vline\  a\in\C^*,\ d\in\C^*\right\}   
\\ [0.8em]
\hline
\end{array}
$$
\item In each case the cyclic group $\langle f\rangle$ is a central subgroup of $\Aut_h(W)^f$, and the automorphism group $\Aut_h(H_f)$ is identified with   $\Aut_h(W)^f/\langle f\rangle$.
\end{enumerate}
\end{thry}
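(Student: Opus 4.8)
The plan is to reduce everything to the study of holomorphic contractions of $(\C^2,0)$ and their centralizers, in four steps matching the four assertions. The basic setup is Hartogs' extension theorem: every biholomorphism of $W=\C^2\setminus\{0\}$ extends across the origin, giving a canonical identification of $\Aut_h(W)$ with the group of biholomorphisms of $(\C^2,0)$ fixing $0$, under which a primary Hopf surface $H_f$ corresponds to the conjugacy class of the contraction $f$. For assertion (1) I would start from Kodaira's normal form $f(z,w)=(\alpha z+\lambda w^n,\beta w)$ quoted above and split into the cases $\lambda=0$ and $\lambda\neq 0$. If $\lambda\neq 0$ then $\alpha=\beta^n$, and conjugating by $(z,w)\mapsto(z,cw)$ with $c^n=\lambda$ normalizes the coefficient of $w^n$ to $1$; this produces $II_b$ (when $n=1$, so $\alpha=\beta$) and $II_a$ (when $n\ge 2$). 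If $\lambda=0$ then $f$ is diagonal, and one distinguishes $\alpha=\beta$ (class $IV$), $\alpha=\beta^r$ with $r\ge 2$ (class $III$), and the remaining non-resonant diagonal case (class $II_c$). This gives surjectivity onto the five classes.

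For assertion (2), a biholomorphism $H_f\simeq H_{f'}$ lifts to a biholomorphism $\Phi$ of universal covers conjugating the deck group $\langle f\rangle$ to $\langle f'\rangle$, so $\Phi\circ f\circ\Phi^{-1}=f'^{\pm 1}$; since $f,f'$ are contractions and conjugation preserves the spectrum of the linear part, the exponent $-1$ would force eigenvalues of modulus $>1$ and is excluded, so $\Phi\circ f\circ\Phi^{-1}=f'$. Hence $H_f\simeq H_{f'}$ iff $f,f'$ are holomorphically conjugate, and I would separate the classes by two invariants: the unordered pair of eigenvalues of the linear part together with its Jordan type, and whether $f$ is holomorphically linearizable. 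Jordan type separates $IV$ (scalar $\alpha\,\id$) from $II_b$ (Jordan block) and isolates $II_c$; linearizability separates $III$ (linearizable) from $II_a$ (not). Within each class the parameters are recovered from the eigenvalues: for $III,II_a$ the inequality $|\delta^r|<|\delta|$ recovers $\delta$ and $r$, while for $IV,II_b$ the common eigenvalue is $\alpha$. Only in $II_c$ is the eigenvalue data an unordered pair $\{\alpha,\delta\}$ although the normal form records an ordering, and the coordinate swap $(z,w)\mapsto(w,z)$ realizes exactly the conjugacy $(\alpha,\delta)\leftrightarrow(\delta,\alpha)$ — the single listed exception.

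For assertion (3) I would solve the functional equation $g\circ f=f\circ g$ for $g\in\Aut_h(\C^2,0)$ by decomposing $g$ into components homogeneous for the weight grading attached to the semisimple part of $df_0$ (assigning to $z^mw^n$ the weight $\lambda_1^m\lambda_2^n$, where $\lambda_1,\lambda_2$ are the eigenvalues) and matching the two sides weight by weight. First $dg_0$ must commute with $df_0$, which already fixes the linear part (diagonal when the eigenvalues are distinct, upper-triangular Toeplitz for the Jordan block $II_b$). The crucial observation is that on each weight space the substitution operator $P\mapsto P\circ f$ equals that weight times a unipotent operator; since the commutation relation for the $i$-th component forces the weight to equal $\lambda_i$, every non-resonant homogeneous component of $g$ must vanish and the resonant ones are pinned down. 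This yields $\GL(2,\C)$ for $IV$ ($f=\alpha\,\id$), purely diagonal $g$ for $II_c$, the form $g=(az+bw^r,dw)$ for $III$, and the linear upper-triangular Toeplitz group for $II_b$; for $II_a$ the same bookkeeping gives $g=(a^rz+bw^r,aw)$, the relation between the two diagonal entries arising precisely from matching the resonant $w^r$-terms produced by $g_2^{\,r}$.

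For assertion (4), centrality of $\langle f\rangle$ is immediate, since by definition every element of $\Aut_h(W)^f$ commutes with $f$ and hence with all $f^k$. For the quotient, covering-space theory gives a surjection from the subgroup of $\Aut_h(W)$ normalizing $\langle f\rangle$ onto $\Aut_h(H_f)$ with kernel $\langle f\rangle$, and the contraction argument of the second step shows this normalizer coincides with the centralizer $\Aut_h(W)^f$, whence $\Aut_h(H_f)\cong\Aut_h(W)^f/\langle f\rangle$. I expect the main obstacle to be the analytic heart shared by the second and third steps: proving that linearizability and the resonant coefficients are genuine biholomorphic invariants (so that $III\neq II_a$ and the centralizers listed are exhaustive). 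This rests on the convergence and essential uniqueness of the Poincar\'e--Dulac normal form in the Poincar\'e domain and, for $II_a$, on the careful weighted-homogeneous analysis of the non-diagonal commutation equation.
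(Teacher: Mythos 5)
Your proposal is correct in outline, but the comparison here is unusual: the paper does not prove Theorem \ref{Wehl} at all --- it is quoted from Wehler \cite{We}, with the author only adjusting the definition of $II_c$ (dropping Wehler's restriction $|\alpha|<|\delta|$) and recording the resulting exception to injectivity in part (\ref{no-inj}). So your argument is not an alternative to anything in the paper; it is a reconstruction of the cited classification. As such it follows the standard route and is sound: Hartogs extension identifies $\Aut_h(W)$ with germs at the origin; the exclusion of $\Phi\circ f\circ\Phi^{-1}=f'^{-1}$ via the spectrum of $d f_0$ is a legitimate variant of the argument the paper does spell out elsewhere (Proposition \ref{PrHolAnti}, which uses the two ends of $W$ instead); the reduction of parts (1)--(2) to conjugacy of contracting germs, separated by Jordan type, eigenvalue pair and linearizability, is exactly the Poincar\'e--Dulac picture in the Poincar\'e domain; and your weighted-homogeneous computation of the centralizers checks out (I verified the $III$ and $II_a$ cases explicitly, including the relation $A=a^r$ forced by matching the resonant $w^r$-terms). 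You correctly isolate the one genuinely analytic input --- convergence and essential uniqueness of the normal form, which is what makes $III\neq II_a$ and makes the centralizer tables exhaustive --- as a classical theorem to be cited rather than reproved.

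One point deserves care, because your own computation exposes it. The non-resonance needed both for the conjugacy invariants in part (2) and for the $II_c$ centralizer to consist of \emph{diagonal} maps is the two-sided condition $\alpha\ne\delta^r$ \emph{and} $\delta\ne\alpha^r$ for all $r$. The definition of $II_c$ in (\ref{WehlerCLasses}) only imposes the first; if one literally allowed $\delta=\alpha^m$ with $m\ge 2$, the pair $(\alpha,\delta)$ would satisfy the stated condition while $H_f$ is of class $III$ after swapping coordinates, and the centralizer would acquire a $z^m$-term in the second component. Your derivation from Kodaira's normal form, which carries the ordering $|\alpha|\le|\beta|$, lands you in the correct (symmetric) non-resonant class automatically, but you should state the symmetric condition explicitly when you define the target of your case analysis, so that the separation of $II_c$ from $III$ in part (2) and the $II_c$ row of the table in part (3) are actually established for every $f$ in the set as defined.
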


Therefore the name of the class gives the dimension of the automorphism group of the corresponding surface.

\begin{re}
 In \cite{We} the case $|\alpha|=|\delta|$ is omitted in the definition of $II_c$, so the exception (\ref{no-inj}) to the injectivity of the map 
$$ {IV}\cup  {III}\cup  {II_a}\cup  {II_b}\cup  {II_c}\ni f\mapsto [H_f]$$ is not mentioned either.
\end{re}

\subsection{Anti-holomorphic automorphisms of  primary Hopf surfaces }

We start with a general remark about topological automorphisms of Hopf surfaces:

\begin{pr}\label{hatsigma}

Let $H=H_f= {W}/{\langle f \rangle}$ be a primary Hopf surface and let $ \pi:W\to\ H$ be the canonical map.  Let $\sigma:H\to\ H$ be a homeomorphism. Then 
\begin{enumerate}
\item There exists a homeomorphism $\hat{\sigma}:W\to\ W$ such that $\pi \circ \hat{\sigma}=\sigma \circ \pi$.
\item For any such homeomorphism $\hat{\sigma}$ we have $\hat{\sigma}\circ\ f \circ\ \hat{\sigma}^{-1}\in\{f,f^{-1}\}$. 
\end{enumerate}
\end{pr}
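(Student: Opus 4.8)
The plan is to treat this as a purely covering-space statement, using only that $\pi\colon W\to H$ is the universal covering (so $W$ is simply connected, being homotopy equivalent to $S^3$) and that its deck transformation group is the infinite cyclic group $\langle f\rangle\cong\Z$. No complex-analytic input is needed, which is essential since $\sigma$ is merely assumed to be a homeomorphism and we cannot invoke any rigidity of holomorphic maps.

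For part (1), I would apply the standard lifting criterion to the continuous map $\sigma\circ\pi\colon W\to H$. Since $\pi_1(W)=0$, the obstruction $(\sigma\circ\pi)_*\pi_1(W)\subseteq\pi_*\pi_1(W)$ vanishes automatically, so a continuous lift $\hat\sigma\colon W\to W$ with $\pi\circ\hat\sigma=\sigma\circ\pi$ exists. To upgrade $\hat\sigma$ to a homeomorphism, I would lift $\sigma^{-1}\circ\pi$ in the same way to some $\widehat{\sigma^{-1}}\colon W\to W$; then both $\hat\sigma\circ\widehat{\sigma^{-1}}$ and $\widehat{\sigma^{-1}}\circ\hat\sigma$ cover $\pi$, hence are deck transformations, i.e. powers of $f$. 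As these compositions are bijections, $\hat\sigma$ is itself bijective, and solving for $\hat\sigma^{-1}$ in terms of $\widehat{\sigma^{-1}}$ and a power of $f$ exhibits $\hat\sigma^{-1}$ as continuous.

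For part (2), let $\hat\sigma$ be any lift as above and set $g\coloneq\hat\sigma\circ f\circ\hat\sigma^{-1}$. First I would verify that $g$ is a deck transformation: from $\pi\circ\hat\sigma=\sigma\circ\pi$ one obtains $\pi\circ\hat\sigma^{-1}=\sigma^{-1}\circ\pi$, and since $\pi\circ f=\pi$ this gives $\pi\circ g=\sigma\circ\pi\circ\hat\sigma^{-1}=\sigma\circ\sigma^{-1}\circ\pi=\pi$, so $g\in\langle f\rangle$. The same computation performed with $\hat\sigma^{-1}$ shows that conjugation by $\hat\sigma$ maps $\langle f\rangle$ into itself and, by symmetry, onto itself; hence it restricts to a group automorphism of $\langle f\rangle\cong\Z$. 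The only automorphisms of $\Z$ are $\pm\id$, and since $f$ generates $\langle f\rangle$, its image $g$ must again be a generator; therefore $g\in\{f,f^{-1}\}$.

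The argument is essentially routine covering-space theory, and the only point requiring a little care is the passage from a continuous lift $\hat\sigma$ to a homeomorphism, which is precisely where one must exploit the invertibility of $\sigma$ by feeding $\sigma^{-1}$ through the same lifting machine. Everything else reduces to the two structural facts $\pi_1(W)=0$ and $\mathrm{Deck}(\pi)=\langle f\rangle\cong\Z$.
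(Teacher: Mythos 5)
Your proof is correct and follows essentially the same route as the paper: lift $\sigma\circ\pi$ using simple connectivity of $W$, then observe that conjugation by $\hat\sigma$ is an automorphism of the deck group $\langle f\rangle\cong\Z$, hence sends the generator $f$ to $f^{\pm 1}$. The only cosmetic difference is in part (1), where the paper invokes uniqueness of the universal covering to obtain a homeomorphism directly, while you produce a continuous lift and upgrade it by also lifting $\sigma^{-1}\circ\pi$; both are standard and valid.
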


\begin{proof}

\begin{equation}
\begin{tikzcd}[row sep=large, column sep=large]
W\ar[d,"\pi"']\ar[dr, "\sigma\circ \pi" description]\ar[r, dashed, "\hat\sigma"] & W\ar[d,"\pi"]\\
H\ar[r,"\sigma"] & H	
\end{tikzcd}	
\end{equation}

(1) The composition $\sigma\circ \pi$   remains a covering  and, since $W$ is simply connected, the uniqueness theorem of the universal covering,  guarantees the existence of a homeomorphism $\hat{\sigma}:W\to\ W$ verifying the equality 
$$\pi\circ\hat{\sigma} = \sigma \circ \pi.$$ 
(2) The group
$$\Aut_{H}(W)=\{g:W\to\ W|\ g \hbox{ homeomorphism},\ \pi\circ\ g=\pi\}$$
 of topological automorphisms of the universal covering $\pi$ (of deck transformations) coincides with the cyclic group $\langle  f\rangle$. On the other hand the map
 $$
 g\mapsto \hat{\sigma}\circ g \circ \hat{\sigma}^{-1}
 $$
 is a group automorphism of $\Aut_{H}(W)$, so it coincides either with $\id_{\Aut_{H}(W)}$ or with the automorphism $g\mapsto g^{-1}$.  Replacing $g$ by $f$ we obtain $
 \hat{\sigma}\circ f \circ \hat{\sigma}^{-1}\in\{f, f^{-1}\}$ as claimed.
 \end{proof}  
 
In the case when $\sigma:H \to\ H$ is holomorphic or anti-holomorphic we have a more precise result:
  
\begin{pr}\label{PrHolAnti}
Let $\sigma:H \to\ H$ be a holomorphic (anti-holomorphic) automorphism of $H=H_f$. Then
\begin{enumerate}
	\item There exists a a holomorphic (anti-holomorphic) automorphism $\hat \sigma$ of $W$ such that $\pi\circ\hat \sigma=\sigma\circ\pi$.
	\item For any such  automorphism $\hat{\sigma}$ we have $\hat{\sigma}\circ\ f \circ\ \hat{\sigma}^{-1}=f$.
\end{enumerate}
	
\end{pr}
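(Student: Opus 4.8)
The plan is to leverage Proposition \ref{hatsigma}, which already handles the purely topological lifting, and then upgrade its conclusions using the holomorphic/anti-holomorphic structure. For part (1), since $\sigma$ is in particular a homeomorphism, Proposition \ref{hatsigma}(1) furnishes a topological lift $\hat\sigma:W\to W$ with $\pi\circ\hat\sigma=\sigma\circ\pi$. The point is that $\pi$ is a local biholomorphism (it is the universal covering of a complex manifold, so it is holomorphic and a local isomorphism of complex structures). Locally we can therefore write $\hat\sigma=\pi^{-1}\circ\sigma\circ\pi$, a composition of a local biholomorphic inverse, the map $\sigma$, and $\pi$; hence $\hat\sigma$ is holomorphic wherever $\sigma$ is holomorphic, and anti-holomorphic wherever $\sigma$ is anti-holomorphic. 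Thus the topological lift is automatically of the required type.

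For part (2), I would start from the weaker conclusion of Proposition \ref{hatsigma}(2), namely $\hat\sigma\circ f\circ\hat\sigma^{-1}\in\{f,f^{-1}\}$, and rule out the value $f^{-1}$. The key observation is that $f$ is a holomorphic \emph{contraction}: by Kodaira's normal form $0<|\alpha|\le|\beta|<1$, so all eigenvalues of the linear part have modulus strictly less than $1$, and $f$ has $0$ as an attracting fixed point in the ambient $\C^2$ (even though $0\notin W$). Consequently $f^{-1}$ is a dilation, with $0$ as a repelling fixed point. I would make this dynamical distinction precise: for any point $p\in W$, the forward orbit $f^k(p)$ converges to $0$ as $k\to+\infty$, whereas $f^{-k}(p)$ escapes every compact subset of $W$ (its norm tends to $\infty$). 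This asymptotic behaviour is preserved under the conjugation $g\mapsto\hat\sigma\circ g\circ\hat\sigma^{-1}$ by a homeomorphism of $W$ that extends appropriately, because $\hat\sigma$ carries orbits of $f$ to orbits of $\hat\sigma\circ f\circ\hat\sigma^{-1}$.

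The cleanest way to conclude is to compare the local behaviour at the origin. The map $\hat\sigma$, being holomorphic or anti-holomorphic on $W=\C^2\setminus\{0\}$ and bounded near $0$ (any continuous extension of a self-map of the sphere sends bounded sets to bounded sets, or one invokes Hartogs-type extension since the singularity at a point in $\C^2$ is removable), extends across the puncture to a (anti-)biholomorphism of a neighbourhood of $0$ fixing $0$. Then $\hat\sigma\circ f\circ\hat\sigma^{-1}$ fixes $0$ with the same contraction property as $f$: its derivative at $0$ is conjugate (by $D\hat\sigma_0$, possibly composed with conjugation) to $Df_0$, so its eigenvalues have modulus $|\alpha|,|\beta|<1$. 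On the other hand $f^{-1}$ has derivative with eigenvalues $\alpha^{-1},\beta^{-1}$ of modulus $>1$. Since a conjugacy cannot turn a contraction into a dilation, the case $f^{-1}$ is excluded, forcing $\hat\sigma\circ f\circ\hat\sigma^{-1}=f$.

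The main obstacle I anticipate is the removable-singularity step: justifying that $\hat\sigma$ extends (anti-)holomorphically across $0$ and that the origin is fixed. One must argue that $\hat\sigma$ is bounded near $0$ — equivalently, that $\hat\sigma$ does not send punctured neighbourhoods of $0$ out to infinity — which follows because $\hat\sigma$ is a proper self-homeomorphism of $W$ that therefore extends to a self-homeomorphism of the one-point compactification $\C^2$ fixing $0$, combined with the analytic-continuation principle (Riemann/Hartogs removable singularity in dimension $2$, where isolated singularities of bounded holomorphic functions are automatically removable). Once the fixed point and the extension are in hand, the eigenvalue-modulus comparison is immediate and settles the claim.
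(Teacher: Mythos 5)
Your proposal is correct and follows essentially the same route as the paper: lift $\sigma$ by covering theory, note the lift is automatically (anti-)holomorphic since $\pi$ is a local biholomorphism, extend $\hat\sigma$ across the puncture to an automorphism of $\C^2$ fixing $0$, and then rule out $\hat{\sigma}\circ f\circ\hat{\sigma}^{-1}=f^{-1}$ by contrasting the contracting behaviour of $f$ at $0$ with the dilating behaviour of $f^{-1}$ --- the paper does this via limits of orbits in the end compactification $W\cup\{0,\infty\}$, you via the moduli of the eigenvalues of the derivative at $0$, and both work. One remark on the step you flag as the main obstacle: no boundedness or properness argument is needed there, because in dimension $2$ Hartogs' extension theorem removes isolated singularities of holomorphic functions unconditionally (unlike the one-variable Riemann removable singularity theorem), which is exactly what the paper invokes; the only point genuinely requiring a (short) argument is that the extension must fix $0$, which follows from injectivity of $\hat\sigma$ on $W$ together with the open mapping theorem.
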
	
\begin{proof}
Since $\pi$ is locally biholomorphic, it follows that $\hat{\sigma}$ is 	 holomorphic (anti-holomorphic) if $\sigma$ is  holomorphic (anti-holomorphic). By Hartogs theorem (applied to $\hat{\sigma}$ or to its composition with the conjugation automorphism) it follows that $\hat{\sigma}$ extends to a holomorphic (anti-holomorphic) automorphism $\tilde\sigma$ of $\C^2$ with $\tilde\sigma(0)=0$.

We can suppose that $f\in  {IV}\cup  {III}\cup  {II_a}\cup  {II_b}\cup  {II_c}$. Any such $f$ is a holomorphic contraction. It follows that for any $w_0\in W$ we have
$$
\lim_{n\to\infty} f^n(w_0)=0,\ \lim_{n\to\infty} f^{-n}(w_0)=\infty
$$
in the end compactification $W\cup\{0,\infty\}$ of $W$. Since $\hat \sigma$ extends to a homeomorphism $\tilde\sigma$ of $\C^2$ with $\tilde\sigma(0)=0$, it follows that the permutation $\mathrm{end}(\hat\sigma)$ induced by $\hat \sigma$ on the set of ends  $\{0,\infty\}$ is $\id_{\{0,\infty\}}$. Therefore
\begin{equation*}
\begin{split}
\lim_{n\to \infty} (\hat{\sigma}\circ\ f \circ\ \hat{\sigma}^{-1})^n(w_0)&=\lim_{n\to \infty} (\hat{\sigma}\circ\ f^n \circ\ \hat{\sigma}^{-1})(w_0)=\mathrm{end}(\hat\sigma)(\lim_{n\to \infty}f^n(\hat{\sigma}^{-1}(w_0)))\\
&=\mathrm{end}(\hat\sigma)(0)=0,	
\end{split}	
\end{equation*}
whereas $\lim_{n\to \infty} f^{-1}(w_0)=\infty$. Therefore the case $\hat{\sigma}\circ\ f \circ\ \hat{\sigma}^{-1}=f^{-1}$ is ruled out.
\end{proof}
 
We introduce the subclass $II'_c$ of $II_c$ defined by
$$
 II'_c\coloneq   \left\{f:W\to W|\ f\begin{pmatrix}
z\\w 	
\end{pmatrix}=\begin{pmatrix}\alpha z\\ \bar \alpha w\end{pmatrix}\vline\    0<|\alpha|<1,\ \alpha\not\in\R \right\}.
$$

\begin{pr}\label{AH-autom}
 Let $f\in  {IV}\cup  {III}\cup  {II_a}\cup  {II_b}\cup  {II_c}$. The following conditions are equivalent:
\begin{enumerate}
	\item The primary Hopf surface $H_f$ admits anti-holomorphic automorphisms.
	\item Either  the coefficients of $f$ are real, or $f\in II'_c$.
\end{enumerate}	
\end{pr}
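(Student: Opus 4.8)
The plan is to reduce the existence of an anti-holomorphic automorphism of $H_f$ to an ordinary biholomorphism question, and then to read off the answer from Wehler's injectivity statement, Theorem \ref{Wehl}(\ref{no-inj}). Write $c:W\to W$ for the standard conjugation $c(z,w)=(\bar z,\bar w)$, and for a contraction $f$ given by an explicit formula let $\bar f\coloneq c f c^{-1}$ be the contraction obtained by conjugating all coefficients of $f$. Since $cf=\bar f c$, the anti-holomorphic automorphism $c$ of $W$ carries $\langle f\rangle$-orbits to $\langle\bar f\rangle$-orbits and hence descends to an anti-holomorphic isomorphism $\bar c:H_f\to H_{\bar f}$.

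First I would establish the equivalence: $H_f$ admits an anti-holomorphic automorphism if and only if $H_f\simeq H_{\bar f}$. Indeed, given an anti-holomorphic automorphism $\sigma$ of $H_f$, the composite $\bar c\circ\sigma:H_f\to H_{\bar f}$ is a biholomorphism, a composite of two anti-holomorphic isomorphisms being holomorphic; conversely, given a biholomorphism $\Phi:H_f\to H_{\bar f}$, the composite $\bar c^{-1}\circ\Phi:H_f\to H_f$ is an anti-holomorphic automorphism. (Equivalently, one may phrase this on the level of lifts using Proposition \ref{PrHolAnti}: an anti-holomorphic $\hat\sigma$ commuting with $f$ may be written $\hat\sigma=g\circ c$ with $g\in\Aut_h(W)$, and the relation $\hat\sigma f=f\hat\sigma$ becomes $g\bar f g^{-1}=f$, i.e. $f$ and $\bar f$ are holomorphically conjugate.)

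Next I would check that $\bar f$ again lies in the normal-form list $IV\cup III\cup II_a\cup II_b\cup II_c$, in the same class as $f$ but with conjugated coefficients; the normalised coefficients $1$ in front of $w^r$ (in $II_a$) and $w$ (in $II_b$) are real and are therefore unchanged. This is what allows me to apply Theorem \ref{Wehl}(\ref{no-inj}) directly to the pair $(f,\bar f)$. That theorem gives $H_f\simeq H_{\bar f}$ precisely when either $f=\bar f$, or both $f$ and $\bar f$ lie in $II_c$ with swapped coefficients. The first alternative means that every coefficient of $f$ equals its conjugate, i.e. the coefficients of $f$ are real. In the second alternative, writing $f(z,w)=(\alpha z,\delta w)$, the swap condition reads $\bar\alpha=\delta$ (equivalently $\bar\delta=\alpha$); since $f\in II_c$ forbids $\alpha=\delta$, this forces $\alpha\notin\R$ and identifies $f$ with an element of $II'_c$. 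Combining the two alternatives yields exactly condition (2), so the two conditions are equivalent.

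The main obstacle is the careful application of Wehler's injectivity in the $II_c$ case: it is precisely the non-injectivity exception (the coefficient swap), together with the inclusion of the omitted subclass $|\alpha|=|\delta|$, that produces the genuinely new family $II'_c$ of contractions with non-real coefficients whose Hopf surfaces nonetheless carry anti-holomorphic automorphisms. One must also verify the harmless-looking but necessary point that conjugation sends each Wehler normal form to a normal form of the same type, so that the classification theorem is applicable without any further reduction.
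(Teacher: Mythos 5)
Your proposal is correct and follows essentially the same route as the paper: reduce to the question $H_f\simeq H_{\bar f}$ via the descended conjugation, check that conjugating coefficients preserves the Wehler normal forms, and then invoke Theorem \ref{Wehl}(\ref{no-inj}), with the $II_c$ coefficient-swap exception producing exactly the subclass $II'_c$. No gaps.
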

\begin{proof}
The  $H_f$ admits an anti-holomorphic automorphism   if and only if $H_f$ is biholomorphic to $\bar H_f$. On the other hand the conjugation automorphism   $c:W\to W$ induces an anti-holomorphic isomorphism $s:H_f\to H_{\fg}$, where $\fg\coloneq c\circ f\circ c^{-1}$. Therefore $\bar H_f\simeq H_{\fg}$, so $H_f$ admits an anti-holomorphic automorphism if and only if $H_f\simeq H_{\fg}$.
 Now note that  $\fg$ is obtained from $f$ by conjugating the coefficients of the polynomial expression which defines $f$. 
On the other hand  Werner's classes are conjugation invariant, in particular  $\fg $ also belongs to ${IV}\cup  {III}\cup  {II_a}\cup  {II_b}\cup  {II_c}$.
By the classification Theorem \ref{Wehl}, it follows that $H_f\simeq H_{\fg}$  if and only if either the coefficients of $f$ and $\fg$ coincide (in other words the coefficients of $f$ are real), or $f$ and $\fg$ belong to $II_c$ and the coefficients $\bar\alpha$, $\bar\delta$ of $\fg$  are obtained from the the coefficients $\alpha$, $\delta$ of $f$ by changing the order. The latter condition is equivalent to $f\in II'_c$.

 \end{proof}

\begin{re}\label{new-direct-proof}
A direct proof of  Proposition \ref{AH-autom} can be obtained	using Proposition \ref{PrHolAnti} and the Taylor expansion of the anti-holomorphic automorphism $\tilde \sigma$ of $\C^2$ obtained by applying Hartogs Theorem to the lift $\hat\sigma$ of an anti-holomorphic automorphism $\sigma$.
\end{re}

For a primary Hopf surface $H$ we  denote by $\mathrm{Ah}(H)$ the set of anti-holomorphic automorphisms. If $H=H_f$ with $f\in  {IV}\cup  {III}\cup  {II_a}\cup  {II_b}\cup  {II_c}$ this set   can be obtained explicitly using the idea of Remark \ref{new-direct-proof}. An anti-holomorphic automorphism $\sigma\in \mathrm{Ah}(H)$ has a lift $\hat \sigma\in \mathrm{Ah}(W)$, which extends to an anti-holomorphic automorphism $\tilde\sigma\in  \mathrm{Ah}(\C^2)$ with $\tilde\sigma (0)=0$. Denoting by $\tilde f\in\Aut_h(\C^2)$ the extension of $f$, we see that the condition $\hat \sigma\circ f=  f\circ \hat \sigma$ is equivalent to $\tilde \sigma\circ\tilde f=\tilde f\circ \tilde \sigma$, which can be interpreted in terms of the Taylor expansion 
$$
\tilde \sigma(z,w):=\left(\sum_{p,q\in \N}a_{pq}\bar z^p\bar w^q,\sum_{p,q\in \N}b_{pq}\bar z^p\bar w^q\right)
$$
of $\tilde \sigma$ at 0. Using this method we obtain easily  
\begin{pr}\label{ahAuto} Let $f\in  {IV}\cup  {III}\cup  {II_a}\cup  {II_b}\cup  {II_c}$ with real coefficients. The set
$$
\mathrm{Ah}(W)^f\coloneq \{u\in \mathrm{Ah}(W)|\ u\circ f=f\circ u\}
$$
is given by the table:
\begin{equation}
\begin{array}{|c|c|}
  \hline &\\ [-0.8em]
\hbox{The class of }f &   \mathrm{Ah}(W)^f	\\ [0.2em]
\hline &\\ [-0.8em]
IV   &\left\{\begin{pmatrix}z\\w\end{pmatrix}\mapsto A\begin{pmatrix}\bar z\\\bar w\end{pmatrix}\vline\ A\in\GL(2,\C)\right\}\\ [0.8em]
\hline  &\\ [-0.8em]
III    &\left\{\begin{pmatrix}z\\w\end{pmatrix}\mapsto \begin{pmatrix}a\bar z+b\bar w^r\\d\bar w\end{pmatrix}\vline\   a\in\C^*,\ d\in\C^*,\ b\in \C\right\}\\ [0.8em]
\hline &\\ [-0.8em]
II_a   &\left\{\begin{pmatrix}z\\w\end{pmatrix}\mapsto \begin{pmatrix}a^r\bar z+b\bar w^r\\a \bar w\end{pmatrix}\vline\   a\in\C^*,\ b\in \C\right\}\\ [0.8em]
\hline &\\ [-0.8em]
II_b   &\left\{\begin{pmatrix}z\\w\end{pmatrix}\mapsto \begin{pmatrix}a \bar z+b\bar w \\a \bar w\end{pmatrix}\vline\   a\in\C^*,\ b\in \C\right\}\\ [0.8em]
\hline &\\ [-0.8em]
II_c\setminus II'_c   &\left\{\begin{pmatrix}z\\w\end{pmatrix}\mapsto \begin{pmatrix}a \bar z  \\ d \bar w\end{pmatrix}\vline\     a\in\C^*,\ d\in\C^*\right\}\\ [0.8em]
\hline  &\\ [-0.8em]
II'_c &\left\{\begin{pmatrix}z\\w\end{pmatrix}\mapsto \begin{pmatrix}a \bar w  \\ d \bar z\end{pmatrix}\vline\     a\in\C^*,\ d\in\C^*\right\}\\
[0.8em]
\hline
\end{array}\ .	
\end{equation}

In each case the cyclic group $\langle f\rangle$ acts freely on the set $\mathrm{Ah}(W)^f$, and $\mathrm{Ah}(H_f)$ is identified with the quotient set $\mathrm{Ah}(W)^f/\langle f\rangle$.

\end{pr}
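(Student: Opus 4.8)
The plan is to reduce the computation entirely to Wehler's description of the holomorphic centralizer $\Aut_h(W)^f$ in Theorem \ref{Wehl}(3), by composing with the standard conjugation $c(z,w)=(\bar z,\bar w)$. The one elementary relation driving everything is the intertwining identity $c\circ f=\fg\circ c$, where $\fg\coloneq c\circ f\circ c^{-1}$ is the contraction obtained by conjugating the coefficients of $f$; in particular $\fg=f$ precisely when the coefficients of $f$ are real. Since $c$ is an anti-holomorphic involution, $u\mapsto c\circ u$ sends an anti-holomorphic automorphism to a holomorphic one, and a one-line manipulation using $c\circ f=\fg\circ c$ and $c^2=\id$ shows that $u\circ f=f\circ u$ holds if and only if $g\coloneq c\circ u$ satisfies $g\circ f\circ g^{-1}=\fg$. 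Thus
$$\Ah(W)^f\ \longrightarrow\ \{g\in\Aut_h(W)\mid g\circ f\circ g^{-1}=\fg\},\qquad u\mapsto c\circ u,$$
is a bijection.

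Next I would read off the table from this bijection. For the classes with real coefficients, namely $IV$, $III$, $II_a$, $II_b$ and $II_c\setminus II'_c$, one has $\fg=f$, so the right-hand set is exactly the centralizer $\Aut_h(W)^f$ tabulated in Theorem \ref{Wehl}(3); post-composing each entry there with $c$ (which conjugates the scalar coefficients $a,b,d$ and substitutes $\bar z,\bar w$ for $z,w$) produces precisely the corresponding rows. For $f\in II'_c$, where $f(z,w)=(\alpha z,\bar\alpha w)$ and $\fg(z,w)=(\bar\alpha z,\alpha w)\neq f$, the right-hand set is no longer a centralizer; I would exhibit the coordinate swap $g_0(z,w)=(w,z)$, check that $g_0\circ f\circ g_0^{-1}=\fg$, and conclude that the set of intertwiners equals the coset $g_0\cdot\Aut_h(W)^f=\{(z,w)\mapsto(dw,az)\mid a,d\in\C^*\}$. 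Composing with $c$ yields the final row $(z,w)\mapsto(a\bar w,d\bar z)$. Together with Proposition \ref{AH-autom}, which guarantees that $\Ah(W)^f=\emptyset$ outside these cases, this gives the whole table.

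It remains to treat the descent to $H_f$. The cyclic group $\langle f\rangle$ acts on $\Ah(W)^f$ by $u\mapsto f^k\circ u$; this preserves $\Ah(W)^f$ because $u$ commutes with $f$, and it is free since $f$ is a contraction and therefore of infinite order, so $f^k\circ u=u$ forces $k=0$. Every $u\in\Ah(W)^f$ maps $\langle f\rangle$-orbits to $\langle f\rangle$-orbits and hence descends through $\pi$ to an anti-holomorphic automorphism $\bar u$ of $H_f$; conversely, by Proposition \ref{PrHolAnti} every $\sigma\in\Ah(H_f)$ admits a lift lying in $\Ah(W)^f$, and two lifts of the same $\sigma$ differ by a deck transformation, i.e.\ by an element of $\langle f\rangle$. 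Hence $u\mapsto\bar u$ induces the asserted bijection $\Ah(W)^f/\langle f\rangle\simeq\Ah(H_f)$.

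The only genuinely non-routine step I anticipate is the class $II'_c$: there the relevant set is the set of holomorphic isomorphisms conjugating $f$ to the \emph{distinct} contraction $\fg$ rather than a centralizer, so one must spot the explicit intertwiner $g_0$ in order to realise it as a single coset of $\Aut_h(W)^f$. Everything else is the bookkeeping of composing Wehler's list with $c$.
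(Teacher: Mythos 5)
Your argument is correct, and it takes a genuinely different route from the paper. The paper obtains the table by the direct method sketched just before the proposition: lift $\sigma$ to $\hat\sigma\in\Ah(W)$, extend by Hartogs to $\tilde\sigma\in\Ah(\C^2)$ with $\tilde\sigma(0)=0$, and impose $\tilde\sigma\circ\tilde f=\tilde f\circ\tilde\sigma$ on the antiholomorphic Taylor expansion of $\tilde\sigma$ at the origin — i.e.\ it redoes, in the antiholomorphic setting, the same coefficient bookkeeping that underlies Wehler's computation of $\Aut_h(W)^f$. You instead observe that $u\mapsto c\circ u$ is a bijection from $\Ah(W)^f$ onto $\{g\in\Aut_h(W)\mid g\circ f\circ g^{-1}=\fg\}$, which for real coefficients is exactly the centralizer already tabulated in Theorem \ref{Wehl}(3), and for $f\in II'_c$ is the coset $g_0\cdot\Aut_h(W)^f$ of that centralizer determined by the explicit intertwiner $g_0(z,w)=(w,z)$ (a one-line check that $g_0\circ f\circ g_0^{-1}=\fg$, plus the standard fact that the set of intertwiners, once nonempty, is a single coset). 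This buys you a proof with no power-series computation at all, at the cost of relying on Wehler's holomorphic table — which the paper has in any case already quoted and used. Your treatment of the descent to $H_f$ (freeness of the $\langle f\rangle$-action because $f$ has infinite order, existence of lifts in $\Ah(W)^f$ by Proposition \ref{PrHolAnti}, uniqueness up to deck transformations) matches the paper's implicit argument. One cosmetic remark: the hypothesis of the proposition as stated ("with real coefficients") formally excludes the $II'_c$ row, whose contractions have $\alpha\notin\R$; your proof correctly covers both regimes, which is what the table intends.
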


\section{The classification of Real primary Hopf surfaces}\label{sect2}

\subsection{Even Real structures, odd Real structures on  primary Hopf surfaces}

We start by the following simple result:

\begin{pr}\label{liftsOfsigma}	
	Let $H=H_f$ be a primary Hopf surface. Let $\sigma:H\to\ H$ be an anti-holomorphic involution of $H$ and $\hat{\sigma}:W\to\ W$ be a lift of $\sigma$ (see Proposition \ref{PrHolAnti}). Then 
	
	\begin{enumerate}
		\item There exists $n\in \Z$ such that $\hat{\sigma}^2=f^n$.
	
			\item The parity of   $n$ in the previous formula is well defined (depends only on $\sigma$, not on the lift $\hat{\sigma}$).
	\end{enumerate}

	\end{pr}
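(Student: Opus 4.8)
The plan is to exploit the fact that $\hat\sigma^2$ covers $\sigma^2=\id_H$, together with the commutation relation $\hat\sigma\circ f=f\circ\hat\sigma$ provided by Proposition \ref{PrHolAnti}(2).

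For (1), I would first observe that $\hat\sigma^2$ is a lift of the identity of $H$: from $\pi\circ\hat\sigma=\sigma\circ\pi$ one gets
$$\pi\circ\hat\sigma^2=\sigma\circ\pi\circ\hat\sigma=\sigma^2\circ\pi=\pi,$$
so $\hat\sigma^2$ is a deck transformation of the universal covering $\pi\colon W\to H$. Since the group of deck transformations is exactly the cyclic group $\langle f\rangle$ (as recalled in the proof of Proposition \ref{hatsigma}), there is a unique $n\in\Z$ with $\hat\sigma^2=f^n$, which gives the claim.

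For (2), I would compare two lifts $\hat\sigma$, $\hat\sigma'$ of the same $\sigma$. Both satisfy $\pi\circ\hat\sigma=\pi\circ\hat\sigma'=\sigma\circ\pi$, so $\hat\sigma'\circ\hat\sigma^{-1}$ covers the identity and is therefore a deck transformation; hence $\hat\sigma'=f^k\circ\hat\sigma$ for some $k\in\Z$. Writing $\hat\sigma^2=f^n$ and using that $\hat\sigma$ commutes with $f$ (Proposition \ref{PrHolAnti}(2)), hence with $f^k$, I would compute
$$(\hat\sigma')^2=f^k\circ\hat\sigma\circ f^k\circ\hat\sigma=f^{k}\circ f^{k}\circ\hat\sigma^2=f^{2k+n}.$$
Thus the exponent attached to $\hat\sigma'$ is $n+2k$, which has the same parity as $n$; this shows that the parity is independent of the chosen lift.

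The argument is essentially formal, so there is no serious obstacle; the only point requiring care is that the two exponents differ by an \emph{even} integer rather than an arbitrary one. This is precisely where Proposition \ref{PrHolAnti}(2) — the fact that every anti-holomorphic lift commutes with $f$ (rather than merely satisfying $\hat\sigma f\hat\sigma^{-1}\in\{f,f^{-1}\}$ as in Proposition \ref{hatsigma}) — is indispensable: the commutation lets the two factors $f^k$ coalesce into $f^{2k}$, guaranteeing that $n\bmod 2$ is well defined.
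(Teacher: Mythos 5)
Your proposal is correct and follows essentially the same route as the paper: part (1) by identifying $\hat\sigma^2$ as a deck transformation of $\pi$, hence an element of $\langle f\rangle$, and part (2) by noting that two lifts differ by some $f^k$ and using the commutation $\hat\sigma\circ f=f\circ\hat\sigma$ from Proposition \ref{PrHolAnti} to obtain $(\hat\sigma')^2=\hat\sigma^2\circ f^{2k}$. Your closing remark correctly pinpoints the role of that commutation in making the parity well defined.
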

	\begin{proof}
	Indeed, since $\sigma^2=\id_{H_f}$ it follows that	$\hat{\sigma}^2\in\Aut_H(W)=\langle f\rangle$. This proves the first claim. 
	
	 Let now  $\hat\sigma$,  $\hat\sigma'$  two lifts of $\sigma$ and $n$, $n'\in\Z$ be the associated integers. We  have $\hat\sigma'\circ \hat\sigma^{-1}\in\Aut_H(W)$, so there exists $k\in\N$ such that $\hat\sigma'=\hat\sigma\circ f^k$.  Since $\hat\sigma$ commutes with $f$, we get $\hat\sigma'^2=\hat\sigma^2\circ f^{2k}$, so $n'=n+2k$.
	\end{proof}

	Taking into account this proposition, it's natural to define:

\begin{dt} \label{even-odd-def} A Real structure $\sigma:H \to\ H$ on $H$ is said to be:	
	\begin{enumerate}
		\item even, if one of the following equivalent conditions is verified:
	\begin{enumerate}
	\item For any lift $\hat{\sigma}:W\to\ W$ of $\sigma$, $\hat{\sigma}^2$ coincides with an even power of $f$.
	\item There exists a lift $\hat{\sigma}:W\to\ W$ of $\sigma$ such that $\hat{\sigma}^2=\id_W$.      
	\end{enumerate}
	
	\item odd, if one of the following equivalent conditions is verified:
	\begin{enumerate}
	\item For any lift $\hat{\sigma}:W\to\ W$ of $\sigma$, $\hat{\sigma}^2$ coincides with an odd power of $f$.
	\item There exists a lift $\hat{\sigma}:W\to\ W$ of $\sigma$ such that $\hat{\sigma}^2=f$.
	\end{enumerate}
	\end{enumerate}	
	\end{dt}

\begin{ex}\label{ExStandard}
Let $f\in IV\cup III\cup II_a\cup II_b\cup II_c$ be	with real coefficients. The standard complex conjugation $c:W\to W$ induces a Real structure $s_f$ on $H_f$, which is obviously even. The Real structure will be called {\it the standard Real structure} of $H_f$.

Let now $f\in II'_c$ be given by $f\begin{pmatrix}
z\\w	
\end{pmatrix}=\begin{pmatrix}
\alpha z\\\bar \alpha w	
\end{pmatrix}$,	 where $0<|\alpha|<1$ and $\alpha\not\in\R$. The anti-holomorphic automorphism of $c':W\to W$ defined by
$$
 c'\begin{pmatrix}
z\\w	
\end{pmatrix}= \begin{pmatrix}
\bar w\\\bar z	
\end{pmatrix}
$$
 commutes with $f$ and is involutive, so it defines an even Real structure $s_f$ on $H_f$, which will also be   called {\it the standard Real structure} on $H_f$.
\end{ex}

\subsection{The classification of even Real structures on primary Hopf surfaces}

Let $E$ be a complex vector space of dimension $n$. Recall that a real structure on $E$ is an anti-linear involution $a:E\to E$. Recall that any two real structures $a$, $b:E\to E$ on $E$ are equivalent, i.e. there exists $l\in\GL(E)$ such that $a=l\circ b\circ l^{-1}$. Indeed, putting
$$
E^a_\pm\coloneq \ker( a\mp \id_E), \ E^b_\pm\coloneq \ker( b\mp \id_E)
$$
we have real direct sum decompositions
$$
E=E^a_+\oplus E^a_-,\ E=E^b_+\oplus E^b_-
$$
with $E^a_-=i E^a_+$, $E^b_-=i E^b_+$. Choose an   $\R$-linear isomorphism $h:E^b_+\to E^a_+$ and note that $l:E\to E$ defined by
$$
l(x+iy)\coloneq h(x)+i h(y) \hbox{ for any } x,\ y\in E^b_+
$$
is a $\C$-linear isomorphism satisfying  $a\circ l=l\circ b$. In particular, in the special case when $E=\C^n$ and $b$ is the standard conjugation $c:\C^n\to\C^n$, we obtain
 
\begin{re}\label{linearRealRem}
Let $a:\C^n\to\C^n$ be an anti-linear involution on $\C^n$. Then there exists a $\C$-linear automorphism $l:\C^n\to \C^n$ such that $a=l\circ c\circ l^{-1}$. 

In other words  any anti-linear Real structure on $\C^n$ is $\GL(n,\C)$-conjugate to the standard conjugation.  
\end{re}

The   classification of even Real structures on primary Hopf surfaces will follow from the following proposition:
\begin{pr}\label{PropEven}
Let $f\in IV\cup III\cup II_a\cup II_b\cup II_c$ be	with real coefficients and let $\phi$ be a Real structure on $W$ such that $\phi\circ f=f\circ \phi$. Then there exists $\psi\in \Aut_h(W)^f$ such that $\phi=\psi\circ c\circ \psi^{-1}$.

\end{pr}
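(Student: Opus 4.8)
The plan is to reduce the statement to solving, within the explicitly known groups of Theorem~\ref{Wehl} and Proposition~\ref{ahAuto}, the conjugation equation $\psi\circ c=\phi\circ\psi$, which is of course equivalent to $\phi=\psi\circ c\circ\psi^{-1}$. First I would record that, since $f$ has real coefficients, the standard conjugation $c$ commutes with $f$, so $c\in\mathrm{Ah}(W)^f$; by hypothesis $\phi\in\mathrm{Ah}(W)^f$ as well. Hence $\phi$ is one of the anti-holomorphic automorphisms listed in Proposition~\ref{ahAuto}, and the sought $\psi$ is to be found among the holomorphic automorphisms on the corresponding line of the table in Theorem~\ref{Wehl}. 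The involutivity $\phi^2=\id_W$ imposes explicit constraints on the coefficients of $\phi$, and the whole proof consists in checking that these constraints are exactly what is needed to solve $\psi\circ c=\phi\circ\psi$ for $\psi$ in the \emph{same} class. Conceptually, writing $g\coloneq c\circ\phi\in\Aut_h(W)^f$ and $\bar g\coloneq c\circ g\circ c$, the condition $\phi^2=\id_W$ becomes $\bar g\circ g=\id_W$, and producing $\psi$ amounts to writing $g=\bar\psi\circ\psi^{-1}$; so the Proposition is the triviality of the Galois cohomology class of the cocycle $g$ for the action of $\mathrm{Gal}(\C/\R)$ on $\Aut_h(W)^f$.

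For the class $IV$, where $\Aut_h(W)^f=\GL(2,\C)$, this is precisely Hilbert~90 and is already contained in Remark~\ref{linearRealRem}: writing $\phi(v)=A\bar v$ with $A\bar A=I$ (the involutivity), Remark~\ref{linearRealRem} produces $l\in\GL(2,\C)$ with $A=l\,\bar l^{-1}$, and $\psi=l$ solves $\psi\circ c=\phi\circ\psi$. For $II_c\setminus II'_c$ one has $\phi(z,w)=(a\bar z,d\bar w)$ with $|a|=|d|=1$ forced by $\phi^2=\id_W$, and $\psi(z,w)=(pz,qw)$ with $p/\bar p=a$, $q/\bar q=d$ does the job.

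The only genuine computation is in the classes $II_b$, $III$, $II_a$, which carry an off-diagonal (for $III$, $II_a$ nonlinear) parameter. I would treat $III$ as the representative case. Writing $\phi(z,w)=(a\bar z+b\bar w^{\,r},\,d\bar w)$ and looking for $\psi(z,w)=(pz+qw^{\,r},\,sw)$, the equation $\psi\circ c=\phi\circ\psi$ splits into the diagonal conditions $a=p/\bar p$ and $d=s/\bar s$, solvable because $|a|=|d|=1$ (these equalities being forced by $\phi^2=\id_W$), together with the single $\R$-linear equation
\[ q-a\bar q=b\,\bar s^{\,r} \]
for $q\in\C$. The classes $II_b$ (take $r=1$, $d=a$) and $II_a$ (take $d=a$ and first diagonal coefficient $a^{\,r}$) reduce to an equation of exactly the same shape.

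The main obstacle is this last equation. Since $|a|=1$, the $\R$-linear endomorphism $q\mapsto q-a\bar q$ of $\C$ has one-dimensional image, equal to $\{w\in\C:\ w+a\bar w=0\}$, so its solvability is a codimension-one condition on the right-hand side. The key point is that the remaining constraint imposed by $\phi^2=\id_W$, namely $a\bar b+b\,\bar d^{\,r}=0$, is precisely this condition: using $d=s/\bar s$ one computes $b\,\bar s^{\,r}+a\,\overline{b\,\bar s^{\,r}}=s^{\,r}\bigl(b\,\bar d^{\,r}+a\bar b\bigr)=0$, so $b\,\bar s^{\,r}$ lies in the image and $q$ exists. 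The analogous involutivity relation $\mathrm{Re}(a^{\,r}\bar b)=0$ (for $II_a$) or $\mathrm{Re}(a\bar b)=0$ (for $II_b$) is exactly what makes the corresponding equation for $q$ solvable. Once $\psi$ is produced in each class it lies in $\Aut_h(W)^f$ by construction, and $\phi=\psi\circ c\circ\psi^{-1}$, as required.
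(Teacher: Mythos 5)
Your proof is correct and follows essentially the same route as the paper: Remark \ref{linearRealRem} for class $IV$, a direct choice of diagonal $\psi$ for $II_c$, and for $III$, $II_a$, $II_b$ the observation that the rank-one $\R$-linear equation for the off-diagonal coefficient of $\psi$ is solvable precisely because its image is the real line cut out by the off-diagonal involutivity constraint on $\phi$. The Galois-cohomology reformulation ($\bar g\circ g=\id$ and splitting $g=\bar\psi\circ\psi^{-1}$) is a nice conceptual gloss not present in the paper, but the actual verification you carry out is the same coefficient computation.
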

\begin{proof}
\begin{enumerate}
%%%%%%%%%%%%%%%%%%%%%%%%%%%%%%%%%%%%
\item $f\in IV$.  In this case, using Proposition \ref{ahAuto} we see that $\phi$ has the form
$$
\phi\begin{pmatrix}
z\\ w	
\end{pmatrix}=\begin{pmatrix}
a & b\\ c & d	
\end{pmatrix}\begin{pmatrix}
\bar z\\  \bar w	
\end{pmatrix}
$$ 
with $\begin{pmatrix}
a & b\\ c & d	
\end{pmatrix}\in\GL(2,\C)$.  The obvious extension   $\tilde\phi:\C^2\to\C^2$ of $\phi$ to $\C^2$ is an anti-linear Real structure on $\C^2$, so   Remark \ref{linearRealRem} applies and gives a $\C$-linear isomorphism $l:\C^2\to\C^2$  such that $\tilde \phi=l\circ c\circ l^{-1}$. Denoting by $\psi:W\to W$ the automorphism induced by $l$ (which obviously commutes with $f$), we get $\phi=\psi\circ c\circ \psi^{-1}$ as claimed. \vspace{2mm}
%%%%%%%%%%%%%%%%%%%%%%%%%%%%%%%%%%%%%%%%%%%%%%%%%%%%%%%%%

%
\item $f\in III$.   In this case  Proposition \ref{ahAuto} shows that $\phi$ is given by 
$$\phi\begin{pmatrix}z\\w\end{pmatrix}= \begin{pmatrix}a\bar z+b\bar w^r\\d\bar w\end{pmatrix},
$$
where $a$, $d\in\C^*$, $b\in \C$. We have
$$
\phi^2\begin{pmatrix}z\\w\end{pmatrix}=\begin{pmatrix}\vert a\vert^2z+(a\bar b+b\bar d^r)w^r\\\vert d\vert^2w\end{pmatrix}.
$$
The condition $\phi^2=\id$ becomes
\begin{equation}\label{phi2=idIII}
\left\{\begin{array}{ccc}
\vert a\vert^2&=&1 \\
a\bar b+b\bar d^r&=&0 \\
\vert d\vert^2&=&1 
 \end{array}\right..
 \end{equation}
 Let now $ \psi\in\Aut_h(W)^f$. By Wehler's Theorem \ref{Wehl} we have
 $$
  \psi\begin{pmatrix}z\\w\end{pmatrix}=\begin{pmatrix} Az+Bw^r\\ Dw\end{pmatrix}
 $$
 with $A$, $D\in\C^*$, $B\in\C$. 	The condition
$$\phi=\psi\circ\ c \circ\ \psi^{-1}$$ 
is equivalent to the system
\begin{equation}\label{eqIII}
\begin{cases}
a=A\bar A^{-1}  \\
b=B\bar  D^{-r}-A\bar A^{-1} \bar B\bar D^{-r} \\
d=D\bar D^{-1}. 
\end{cases}
\end{equation}
Since $|a|=|d|=1$ we can write $a=e^{i\theta}$, $d=e^{i\delta}$ with $\theta$, $\delta\in\R$. Put $A\coloneq e^{i\frac{\theta}{2}}$, $D\coloneq e^{i\frac{\delta}{2}}$. With this choice we have $A\bar A^{-1}=A^2=a$, $D\bar D^{-1}=D^2=d$, so the first and the third equations in (\ref{eqIII}) are satisfied.  We are seeking $B\in\C$ such that the second equation is also satisfied.

Consider the $\R$-linear map $\lambda:\C\to\C$  defined by
$$
\lambda(z)=u z-v\bar z,
$$
where $u\coloneq \bar  D^{-r}$,  $v\coloneq A\bar A^{-1}  \bar D^{-r}= A^2\bar D^{-r}$. This map is not surjective. Using $|u|=|v|=1$  it follows easily that its image is the real line
\begin{equation}\label{imlambda}
\im(\lambda)=\{\zeta\in\C|\ u^{-1}\zeta+v \bar \zeta=0\}\subset\C.
\end{equation}
Now note that $b\in\im(\lambda)$, because
$$
\hspace*{6mm}u^{-1}b+ v\bar b=\bar  D^{r}b+A^2\bar D^{-r}\bar b=\bar D^{-r}(\bar D^{2r}b+   A^2\bar b)=\bar D^{-r}(\bar d^rb+a\bar b)=0
$$
by the second equation in (\ref{phi2=idIII}). Therefore, there exists $B\in\C$ such that
$$
b=\lambda(B)=\bar  D^{-r} B-A\bar A^{-1}\bar D^{-r} \bar B,
$$
which proves that, with this choice, the second equation in (\ref{eqIII}) is  satisfied, too.
\vspace{2mm}
%%%%%%%%%%%%%%%%%%%%%%%%%%%%%%%%%%%%%%%%%%%%%%%%%%%%%

\item $f\in II_a$. In this case Proposition \ref{ahAuto} shows that $\phi$ has the form 
$$\phi\begin{pmatrix} z\\w\end{pmatrix}=\begin{pmatrix}a^r\bar z+b\bar w^r\\a\bar w\end{pmatrix}$$
 with $a\in\C^*$ and $b\in \C$. The condition $\phi^2=\id_W $ is equivalent to the system
\begin{equation}\label{phi2=idIIa}
\left\{\begin{array}{ccc}
\vert a\vert^2&=&1 \\
a^r\bar b+b\bar a^r&=&0 		
\end{array}\right..
\end{equation}
By Wehler's theorem, an automorphism $\psi\in \Aut_h(W)^f$ has the form
$$\psi\begin{pmatrix}z\\w\end{pmatrix}=\begin{pmatrix}A^rz+Bw^r\\Aw\end{pmatrix}$$
with $A\in\C^*$, $B\in \C$. The condition
$$\phi=\psi\circ\ c \circ\ \psi^{-1}$$ 
is equivalent to the system 
\begin{equation}\label{eqIIa}
\begin{cases}
a=A\bar A^{-1}  \\
b=B\bar A^{-r}-A^{r} \bar B\bar A^{-2r}. 
\end{cases}
\end{equation}
Since $|a| ^2=1$  we can write
	 $a=e^{i\theta}$ with $\theta\in\R$. Putting $A=e^{\frac{i\theta}{2}}$ we have $A\bar A^{-1}=A^2=a$. As in the previous case consider the  $\R$-linear map $\lambda:\C\to  \C$   
	$$\lambda(z)=uz-v\bar z,$$ 
where this time  we choose $u\coloneq A^r$ and $v\coloneq A^{3r}$. We have again $|u|=|v|=1$, so  the image of $ \lambda$ is again given by (\ref{imlambda}). Note that $b\in\im(\lambda)$ because
$$
u^{-1}b+ v \bar b=A^{-r} b+A^{3r} \bar b=A^r(A^{2r}\bar b+ A^{-2r} b)=A^r(a^r \bar b+ b\bar a^r)=0
$$
by the second equation in (\ref{phi2=idIIa}). Therefore there exists $B\in\C$ such that
$$
b=\lambda(B)= A^r B- A^{3r} \bar B=B\bar A^{-r}- A^r \bar B\bar A^{-2r},
$$
which shows that $(A,B)\in\C^*\times\C$ is a solution of the system (\ref{eqIIa}). 
\vspace{2mm}
%%%%%%%%%%%%%%%%%%%%%%%%%%%%%%%%%%%%%%%%%%%%%%%

\item $f\in II_b$.   In this case Proposition \ref{ahAuto} shows that $\phi$ has the form 
$$\phi\begin{pmatrix}
z\\w	
\end{pmatrix}
=\begin{pmatrix} a\bar z+b\bar w\\a\bar w\end{pmatrix}$$
 where $a\in \C^*$, $b\in\C$.  The condition $\phi^2=\id_W $ is equivalent to
\begin{equation}\label{phi2=idIIb}
\left\{\begin{array}{ccc}
\vert a\vert^2&=&1 \\
a\bar b+b\bar a&=&0
\end{array}\right..
\end{equation}

By Wehler's theorem, an automorphism $\psi\in \Aut_h(W)^f$  has the form 
$$\psi\begin{pmatrix}
z\\w	
\end{pmatrix}=\begin{pmatrix}
Az+Bw\\Aw	
\end{pmatrix}$$
where $A\in \C^*$, $B\in\C$. We use the same arguments as in the case $II_a$ but taking $r=1$ in all formulae used in this case. 
\vspace{2mm}
%%%%%%%%%%%%%%%%%%%%%%%%%%%%%%%%%%%%%%%

\item $f\in II_c$. In this case Proposition \ref{ahAuto} shows that $\phi$ has the form 
$$\phi\begin{pmatrix}
z\\w	
\end{pmatrix}
=\begin{pmatrix} a\bar z \\ b\bar w\end{pmatrix}$$
 where $a$, $b\in \C^*$.  The condition $\phi^2=\id_W $ is equivalent to
\begin{equation}\label{phi2=idIIc}
\left\{\begin{array}{ccc}
| a|^2&=&1 \\
|b|^2&=&1
\end{array}\right..
\end{equation}

By Wehler's theorem, in this case an automorphism $\psi\in \Aut_h(W)^f$   has the form 
$$\psi\begin{pmatrix}
z\\w	
\end{pmatrix}=\begin{pmatrix}
Az \\Bw	
\end{pmatrix}$$
where $A$, $B\in \C^*$, and the condition 
$$\phi=\psi\circ\ c \circ\ \psi^{-1}$$
is equivalent to the system
\begin{equation}\label{eqIIc}
\begin{cases}
a=A\bar  A^{-1}  \\
b=B\bar B^{-1}. 
\end{cases}	
\end{equation}
It suffices to put $A\coloneq e^{i\frac{\theta}{2}}$, $B\coloneq e^{i\frac{\beta}{2}}$, where $a=e^{i \theta}$, $b=e^{i \beta}$.
\end{enumerate}
\end{proof}

Using the notation $c'$   introduced in Example \ref{ExStandard} we can state the following analogue of Proposition \ref{PropEven} for the the subclass $II'_c$:

\begin{pr}\label{PropEvenII'c}
Let  $f\in II'_c$ be given by $f\begin{pmatrix}
z\\w	
\end{pmatrix}=\begin{pmatrix}
\alpha z\\\bar \alpha w	
\end{pmatrix}$,	 where $0<|\alpha|<1$ and $\alpha\not\in\R$. Let $\phi$ be a Real structure on $W$ such that $\phi\circ f=f\circ \phi$. There exists $\psi\in\Aut_h(W)^f$ such that $\phi=\psi\circ c'\circ \psi^{-1}$.
\end{pr}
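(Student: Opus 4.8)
The plan is to reduce everything to the explicit description of anti-holomorphic automorphisms already available. Since $\phi$ is an anti-holomorphic involution of $W$ commuting with $f$, it lies in the set $\mathrm{Ah}(W)^f$. By the $II'_c$ row of the table in Proposition \ref{ahAuto}, every element of $\mathrm{Ah}(W)^f$ has the shape
$$
\phi\begin{pmatrix}z\\w\end{pmatrix}=\begin{pmatrix}a\bar w\\ d\bar z\end{pmatrix},\qquad a,\,d\in\C^*,
$$
so the first step is simply to record that $\phi$ must be of this form.

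Next I would extract the constraint coming from the involution condition $\phi^2=\id_W$. A direct computation gives
$$
\phi^2\begin{pmatrix}z\\w\end{pmatrix}=\begin{pmatrix}a\bar d\,z\\ d\bar a\,w\end{pmatrix},
$$
so $\phi^2=\id_W$ is equivalent to $a\bar d=1$ together with $d\bar a=1$. These two equations are complex conjugates of one another, hence amount to the single relation $d=\bar a^{-1}$. This is the only information about $(a,d)$ that I will use.

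The third step identifies the conjugating automorphism. By Wehler's Theorem \ref{Wehl} (the $II_c$ row, which applies since $II'_c\subset II_c$), the group $\Aut_h(W)^f$ consists exactly of the diagonal maps $\psi(z,w)=(Az,Dw)$ with $A,D\in\C^*$, each of which manifestly commutes with $f$. Computing the conjugate,
$$
\psi\circ c'\circ\psi^{-1}\begin{pmatrix}z\\w\end{pmatrix}=\begin{pmatrix}A\bar D^{-1}\bar w\\ D\bar A^{-1}\bar z\end{pmatrix},
$$
so matching with $\phi$ requires $A\bar D^{-1}=a$ and $D\bar A^{-1}=d$. I would then observe that these are not independent: setting $A=a\bar D$ solves the first equation, and substituting into the second gives $D\bar A^{-1}=D(\bar a D)^{-1}=\bar a^{-1}=d$, which holds automatically by the relation $d=\bar a^{-1}$ found above. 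Taking for instance $D=1$, $A=a$ then yields $\psi\in\Aut_h(W)^f$ with $\phi=\psi\circ c'\circ\psi^{-1}$, as required.

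Unlike the case analysis in Proposition \ref{PropEven}, there is essentially no obstacle here: because the commutant $\Aut_h(W)^f$ is already diagonal and the two scaling equations are conjugate to each other, no auxiliary $\R$-linear map or image computation (as in the $III$, $II_a$, $II_b$ cases) is needed. The only point requiring care is to notice that the pair of matching equations, which superficially looks overdetermined, is in fact consistent precisely because of the involution relation $a\bar d=1$; once this is seen, the solvability is immediate and the choice of $\psi$ explicit.
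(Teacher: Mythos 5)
Your proof is correct and follows essentially the same route as the paper: read off the form of $\phi$ from Proposition \ref{ahAuto}, reduce the involution condition to $d=\bar a^{-1}$, and conjugate $c'$ by a diagonal automorphism, arriving at the same choice $\psi(z,w)=(az,w)$ that the paper writes down directly. The only difference is that you derive this $\psi$ by solving the general matching equations rather than exhibiting it outright, which is a harmless elaboration.
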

\begin{proof}
Using Proposition \ref{ahAuto} we see that $\phi$ has the form
$
\phi\begin{pmatrix}
 z\\  w	
\end{pmatrix}=\begin{pmatrix}
a \bar w\\ d \bar z	
\end{pmatrix}$,
and the condition $\phi^2=\id_W$ becomes 	$\bar a= d^{-1}$.  It suffices to note that
$$
 \begin{pmatrix}
 z\\  w	
\end{pmatrix}\mapsto \begin{pmatrix}
 a z\\    w	
\end{pmatrix}
$$
defines an element $\psi\in \Aut_h(W)^f$ and that $\phi=\psi\circ c'\circ \psi^{-1}$.
 \end{proof}

Using Propositions \ref{PropEven}, \ref{PropEvenII'c} we obtain the following classification theorem for even Real structures on primary Hopf surfaces:
\begin{thry}\label{ClassEven}
Let $f\in IV\cup III\cup II_a\cup II_b\cup II_c$  be either	with real coefficients or element of the subclass $II'_c$. Let $\sigma:H_f\to H_f$ be an even Real structure on $H_f$. There exists a holomorphic automorphism $g\in \Aut_h(H_f)$ such that $\sigma=g\circ s_f\circ g^{-1}$, where $s_f$ is the standard Real structure on $H_f$. 	
\end{thry}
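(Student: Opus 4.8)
The plan is to lift everything to the universal cover $W$, apply the structural Propositions \ref{PropEven} and \ref{PropEvenII'c}, and then descend the resulting conjugacy back down to $H_f$. First I would exploit the evenness of $\sigma$ directly: by Definition \ref{even-odd-def}(1)(b) there is a lift $\hat\sigma:W\to W$ of $\sigma$ with $\hat\sigma^2=\id_W$. Being a lift of the anti-holomorphic involution $\sigma$, this $\hat\sigma$ is itself anti-holomorphic (Proposition \ref{PrHolAnti}(1) together with the fact that any two lifts differ by a power of $f$), and by Proposition \ref{PrHolAnti}(2) it commutes with $f$. Thus $\hat\sigma$ is an anti-holomorphic involution of $W$ satisfying $\hat\sigma\circ f=f\circ\hat\sigma$, i.e. a Real structure $\phi:=\hat\sigma$ on $W$ commuting with $f$ — precisely the input required by the two propositions.

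Next I would invoke those propositions according to the type of $f$. If the coefficients of $f$ are real, Proposition \ref{PropEven} yields $\psi\in\Aut_h(W)^f$ with $\hat\sigma=\psi\circ c\circ\psi^{-1}$; if instead $f\in II'_c$, Proposition \ref{PropEvenII'c} yields $\psi\in\Aut_h(W)^f$ with $\hat\sigma=\psi\circ c'\circ\psi^{-1}$. In either case, by Example \ref{ExStandard} the conjugation $c$ (respectively $c'$) is a lift of the standard Real structure $s_f$ on $H_f$.

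Finally I would descend the relation. By Wehler's Theorem \ref{Wehl}(4) the automorphism $\psi\in\Aut_h(W)^f$ descends to some $g\in\Aut_h(H_f)=\Aut_h(W)^f/\langle f\rangle$, characterised by $\pi\circ\psi=g\circ\pi$; likewise $\pi\circ c=s_f\circ\pi$ (resp. $\pi\circ c'=s_f\circ\pi$) and $\pi\circ\hat\sigma=\sigma\circ\pi$. Since $\psi$, $\psi^{-1}$, the relevant conjugation, and $\hat\sigma$ all commute with $f$ and the covering projection $\pi$ is compatible with composition, the equality $\hat\sigma=\psi\circ c\circ\psi^{-1}$ pushes forward to $\sigma\circ\pi=g\circ s_f\circ g^{-1}\circ\pi$, and surjectivity of $\pi$ gives $\sigma=g\circ s_f\circ g^{-1}$, as required.

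The genuine content of the argument is entirely contained in the already-established Propositions \ref{PropEven} and \ref{PropEvenII'c}, where the case-by-case linear-algebra computations live; the only point that needs care here is the very first reduction, namely verifying that the involutive lift guaranteed by evenness is simultaneously anti-holomorphic and $f$-commuting, so that the hypotheses of those propositions are met verbatim. Everything after that is the functorial bookkeeping of descent through the quotient $\pi$.
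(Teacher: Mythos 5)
Your proposal is correct and follows essentially the same route as the paper's proof: use evenness to obtain an involutive (hence Real, anti-holomorphic, $f$-commuting) lift $\hat\sigma$, apply Proposition \ref{PropEven} or \ref{PropEvenII'c} to conjugate it to $c$ or $c'$ by some $\psi\in\Aut_h(W)^f$, and descend through $\pi$. The only difference is that you spell out the justification that the lift is anti-holomorphic and commutes with $f$, and the descent bookkeeping, both of which the paper leaves implicit.
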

In other words, any {\it even} Real structure on $H_f$ is equivalent to its standard Real structure $s_f$.
\begin{proof}
Since $\sigma$ is even, there exists a lift $\hat \sigma:W\to W$ which is an involution, hence  a Real structure on $W$. By Propositions  \ref{PropEven}, \ref{PropEvenII'c} there exists $\psi\in\Aut_h(W)^f$ such that $\hat\sigma=\psi\circ c\circ \psi^{-1}$, respectively $\hat\sigma=\psi\circ c'\circ \psi^{-1}$. Denoting by $g\in\Aut_h(H_f)$ the automorphism induced by $\psi$, we obtain $\sigma= g\circ s_f\circ g^{-1}$.	
\end{proof}

Note that Proposition \ref{liftsOfsigma} and Definition \ref{even-odd-def} generalize in a natural way to primary Hopf $n$-folds for any $n\geq 2$. Moreover, for a primary  Hopf $n$-fold $H_f$ defined by a holomorphic contraction $f$ given by a polynomial formula with real coefficients, we can define the standard Real structure  $s_f$ of $H_f$ as in Example \ref{ExStandard}.

Using   Remark \ref{linearRealRem} we obtain in the same way the classification of even Real structures on a primary Hopf $n$-fold 
$$H_{f_\alpha}\coloneq \qmod{\C^n\setminus\{0\}}{\langle f_\alpha\rangle},$$
 where $0<|\alpha|<1$ and $f_\alpha(z)=\alpha z$.

\begin{re} 
The primary Hopf $n$-fold $H_{f_\alpha}$ admits an even Real structure if and only if $\alpha\in\R$. If this is the case, any even Real structure on $H_{f_\alpha}$ is equivalent to its standard Real structure $s_{f_\alpha}$. 	
\end{re}

\subsection{The classification of odd Real structures on primary Hopf surfaces}

The classification of odd Real  structures is more difficult.  We consider first the case when the diagonal coefficients of $f$ (denoted by $ \alpha$, $\delta$, $\delta^r$ in Theorem \ref{Wehl}) are positive.  We will start with the following  remark which can be proved easily by direct computations:
\begin{re}\label{kroot}
Let $f\in IV \cup III\cup  II_a\cup II_b\cup II_c$	be with real coefficients and positive diagonal coefficients, and let $k\in\N^*$. The automorphism $f^{\frac{1}{k}}\in\Aut_h(W)$ defined in the table below  has the properties:
\begin{enumerate}
\item $f^{\frac{1}{k}}$ is a polynomial holomorphic contraction with real coefficients. 
\item $f^{\frac{1}{k}}$ is a root of order $k$ of $f$.
\item \label{Auf-k-root} $\Aut_h(W)^{f^{\frac{1}{k}}}=\Aut_h(W)^{f}$, $\Ah(W)^{f^{\frac{1}{k}}}=\Ah(W)^{f}$.
\end{enumerate}

$$  \begin{array}  {|c|c|c|}
\hline &&\\ [-1em]
\hbox{The class of } f &  f\begin{pmatrix}
z\\w 	
\end{pmatrix}& f^{\frac{1}{k}}\begin{pmatrix}
z\\w 	
\end{pmatrix}   
 \\ && \\ [-1em]
 \hline &&\\ [-1em]
IV & \begin{pmatrix}\alpha z\\ \alpha w\end{pmatrix} & \begin{pmatrix}\alpha^{\frac{1}{k}} z\\ \alpha^{\frac{1}{k}} w\end{pmatrix}
\\ && \\ [-1em]
 \hline &&\\ [-1em]
III & \begin{pmatrix}\delta^r z\\ \delta w\end{pmatrix}  & \begin{pmatrix}
 \delta^{\frac{r}{k}}z\\ \delta^{\frac{1}{k}}w 	
\end{pmatrix}   
\\ && \\ [-1em]
 \hline &&\\ [-1em]
II_a  & \begin{pmatrix}\delta^r z+w^r\\ \delta w\end{pmatrix} &  \begin{pmatrix}
 \delta^{\frac{r}{k}}z+\frac{1}{k}\delta^{r\big(\frac{1-k}{k}\big)}w^r
\\ 
\delta^{\frac{1}{k}}w	
\end{pmatrix}  
\\ && \\ [-1em]
 \hline &&\\ [-0.7em]
II_b & \begin{pmatrix}\alpha z+w\\ \alpha w\end{pmatrix} & \begin{pmatrix}
 \alpha^{\frac{1}{k}}z+\frac{1}{k}\alpha^{\frac{1-k}{k}}w
\\
\alpha^{\frac{1}{k}}w	
\end{pmatrix}
\\ && \\ [-1em]
\hline &&\\ [-1em]
II_c &\begin{pmatrix}\alpha z\\ \delta w\end{pmatrix} & \begin{pmatrix}
 \alpha^{\frac{1}{k}}z
\\
\delta^{\frac{1}{k}}w	
\end{pmatrix}  
\\ [0.8em] 
\hline
\end{array}
$$
\end{re}
\begin{pr} \label{class-odd-pos-coeff}
Let $f\in IV \cup III\cup  II_a\cup II_b\cup II_c$	be with real coefficients and positive diagonal coefficients, and let $f^{\frac{1}{2}}$ be  the square root of $f$ given by Remark \ref{kroot}. Let $c:W\to W$ be the standard conjugation. Then:
\begin{enumerate}
\item \label{ccircf1/2} The composition $c\circ f^{\frac{1}{2}}$ belongs to $\mathrm{Ah}(W)^f$ and satisfies  $(c\circ f^{\frac{1}{2}})^2=f$.
\item For any $\phi\in \mathrm{Ah}(W)^f$ with $\phi^2=f$ there exists $\psi\in\Aut_h(W)^f$  such that
$$
\phi=\psi\circ (c\circ f^{\frac{1}{2}})\circ \psi^{-1}.
$$
\end{enumerate}	
\end{pr}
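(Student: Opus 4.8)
The plan is to settle part (1) by direct computation and to reduce part (2) to the even case already treated in Proposition \ref{PropEven}. Throughout write $\phi_0\coloneq c\circ f^{\frac12}$, and recall from Remark \ref{kroot} that $f^{\frac12}$ is a polynomial contraction with \emph{real} coefficients satisfying $(f^{\frac12})^2=f$ and commuting with $f$. Since any holomorphic map with real coefficients commutes with the standard conjugation $c$, we have $c\circ f^{\frac12}=f^{\frac12}\circ c$ and $c\circ f=f\circ c$. With these two facts part (1) is immediate: $\phi_0$ is anti-holomorphic, it commutes with $f$ because $(c\circ f^{\frac12})\circ f=c\circ f\circ f^{\frac12}=f\circ(c\circ f^{\frac12})$, so $\phi_0\in\Ah(W)^f$; and
$$\phi_0^2=c\circ f^{\frac12}\circ c\circ f^{\frac12}=c\circ c\circ f^{\frac12}\circ f^{\frac12}=\id_W\circ(f^{\frac12})^2=f.$$

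For part (2) the idea is to \emph{straighten} $\phi$ into an honest involution by dividing out the square root of $f$. Given $\phi\in\Ah(W)^f$ with $\phi^2=f$, I would set
$$\phi'\coloneq \phi\circ (f^{\frac12})^{-1}.$$
The crucial input is Remark \ref{kroot}(\ref{Auf-k-root}), which gives $\Ah(W)^{f^{\frac12}}=\Ah(W)^{f}$; hence $\phi$ commutes with $f^{\frac12}$. Consequently $\phi'$ is anti-holomorphic, commutes with $f$, and
$$(\phi')^2=\phi\circ (f^{\frac12})^{-1}\circ\phi\circ (f^{\frac12})^{-1}=\phi^2\circ f^{-1}=\id_W,$$
so $\phi'$ is an even Real structure on $W$ commuting with $f$. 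Since $f$ has real coefficients, Proposition \ref{PropEven} applies and yields $\psi\in\Aut_h(W)^f$ with $\phi'=\psi\circ c\circ\psi^{-1}$.

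It then remains to carry the factor $f^{\frac12}$ back through the conjugation. Using Remark \ref{kroot}(\ref{Auf-k-root}) once more, now in the form $\Aut_h(W)^{f^{\frac12}}=\Aut_h(W)^{f}$, the automorphism $\psi$ commutes with $f^{\frac12}$, whence
$$\phi=\phi'\circ f^{\frac12}=\psi\circ c\circ\psi^{-1}\circ f^{\frac12}=\psi\circ(c\circ f^{\frac12})\circ\psi^{-1},$$
which is the desired conjugacy. The real content of the argument --- and the step I would check most carefully --- is precisely the pair of identities $\Ah(W)^{f^{\frac12}}=\Ah(W)^{f}$ and $\Aut_h(W)^{f^{\frac12}}=\Aut_h(W)^{f}$ of Remark \ref{kroot}: the first guarantees that $\phi'$ is genuinely an involution, while the second guarantees that the conjugating $\psi$ slides past $f^{\frac12}$. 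These are what make the reduction uniform, so that no separate analysis of the five Wehler classes is needed beyond the one already carried out in Proposition \ref{PropEven}.
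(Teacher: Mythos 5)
Your proposal is correct and follows essentially the same route as the paper: part (1) by noting that $f^{\frac12}$ has real coefficients and hence commutes with $c$, and part (2) by forming $\phi'=\phi\circ f^{-\frac12}$ (which equals the paper's $f^{-\frac12}\circ\phi$ since $\phi$ commutes with $f^{\frac12}$), checking $\phi'^2=\id_W$ via Remark \ref{kroot}(\ref{Auf-k-root}), invoking Proposition \ref{PropEven}, and sliding $\psi$ past $f^{\frac12}$ using the same remark. No substantive difference.
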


In other words, under the assumption of the proposition, any odd Real structure on $H_f$ is equivalent to the odd real structure $\sigma_f$ induced by $c\circ f^{\frac{1}{2}}$.
\begin{proof}
(1) This follows taking into account that, since $f^{\frac{1}{2}}$ has real coefficients, it commutes with $c$. 
\vspace{2mm}\\
(2)
Put $\phi'=f^{-\frac{1}{2}}\circ\phi$, and note that $\phi$ belongs to $\mathrm{Ah}(W)^f$ and (since $\phi$ commutes with $f^{-\frac{1}{2}}$ by Remark \ref{kroot} (\ref{Auf-k-root}))  satisfies $\phi'^2=\id_W$. Therefore $\phi'$ is a Real structure on $W$ which commutes with $f$. By Proposition \ref{PropEven}, there exists $\psi\in \Aut_h(W)^f$ such that $\phi'=\psi\circ c\circ \psi^{-1}$. Therefore
$$f^{-\frac{1}{2}}\circ\phi=\psi\circ c\circ \psi^{-1}.
$$
By   Remark \ref{kroot} (\ref{Auf-k-root}) again $\psi$ commutes with $f^{\frac{1}{2}}$, so we obtain
$$
\phi= \psi\circ (c\circ f^{\frac{1}{2}})\circ \psi^{-1},
$$
as claimed. 
\end{proof}

The proposition below shows that if $H_f$   admits an odd Real structure and 
$$f\in   III\cup  II_a\cup II_b\cup II_c,$$
 has real coefficients, then its diagonal coefficients are always positive, so Proposition \ref{class-odd-pos-coeff} classifies  odd Real structure on all primary Hopf surfaces except those of class $II'_c$ and those of class $IV$ with negative parameter $\alpha$.
 \begin{pr}
 Let $f\in   III\cup  II_a\cup II_b\cup II_c$ be with real coefficients.	 The following conditions are equivalent:
\begin{enumerate}
\item There exists $\phi\in \mathrm{Ah}(W)^f$ such that $\phi^2=f$.
\item The diagonal coefficients of  $f$ are positive.
\end{enumerate}
 \end{pr}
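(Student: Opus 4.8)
The plan is to argue case by case over the four classes $III$, $II_a$, $II_b$, $II_c$, in each case exploiting the explicit description of $\mathrm{Ah}(W)^f$ given by Proposition \ref{ahAuto}. This applies because $f$ has real coefficients and, not belonging to $II'_c$, is of the generic (non-swapping) form listed there. The single observation that organizes all four cases is that every $\phi\in\mathrm{Ah}(W)^f$ is anti-linear in the diagonal variables; hence the diagonal entries of the composition $\phi^2$ are the genuinely positive real numbers $|a|^2$ (and $|d|^2$, when the class carries a second diagonal parameter). Concretely, I would first record from the table of Proposition \ref{ahAuto} the form of $\phi^2$: for $f\in II_c$ with $\phi(z,w)=(a\bar z,d\bar w)$ one gets $\phi^2(z,w)=(|a|^2 z,|d|^2 w)$; for $f\in II_b$ with $\phi(z,w)=(a\bar z+b\bar w,a\bar w)$ one gets $\phi^2(z,w)=(|a|^2 z+(a\bar b+b\bar a)w,\,|a|^2 w)$; and similarly (with a factor $r$ in the relevant exponents) for $III$ and $II_a$.

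For the implication $(1)\Rightarrow(2)$ I compare diagonal entries in the identity $\phi^2=f$. In $II_c$ this reads $|a|^2=\alpha$, $|d|^2=\delta$, forcing $\alpha,\delta>0$; in $II_b$ it reads $|a|^2=\alpha$, forcing $\alpha>0$; in $III$ and $II_a$ the lower-right entry gives $|d|^2=\delta$ (respectively $|a|^2=\delta$), forcing $\delta>0$. Since for $III$ and $II_a$ the two diagonal coefficients of $f$ are $\delta^r$ and $\delta$, their simultaneous positivity is equivalent to $\delta>0$, which is exactly what the diagonal comparison yields. Thus in all four classes the diagonal coefficients of $f$ are positive.

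For $(2)\Rightarrow(1)$ I would build $\phi$ explicitly. Assuming the diagonal coefficients positive, choose $a$ (and, where present, $d$) to be the positive real square roots of the corresponding diagonal coefficients; the diagonal equations are then automatically satisfied. It remains to solve the single off-diagonal equation for $b$. For $f\in III$ the off-diagonal coefficient of $f$ vanishes, so the equation is the homogeneous $a\bar b+b\bar d^r=0$, solved by $b=0$. For $f\in II_a$ and $f\in II_b$ the off-diagonal coefficient of $f$ equals $1$, so the equation is $a^r\bar b+b\bar a^r=1$ (respectively $a\bar b+b\bar a=1$), that is $2\,\mathrm{Re}(\bar a^r b)=1$ (respectively $2\,\mathrm{Re}(\bar a b)=1$); with $a$ chosen real this is $2a^r\,\mathrm{Re}(b)=1$, solved by the real number $b=\tfrac12 a^{-r}$ (respectively $b=\tfrac12 a^{-1}$). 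For $f\in II_c$ there is no off-diagonal equation. In each case the resulting $\phi$ lies in $\mathrm{Ah}(W)^f$ and satisfies $\phi^2=f$.

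There is no deep obstruction in this argument: its substance is the bookkeeping across the four classes together with the structural remark that the diagonal part of $\phi^2$ is forced to be a positive modulus squared. The only point requiring slight care is, in the affine classes $II_a$ and $II_b$, checking that the constant $1$ lies in the image of the real-linear map $b\mapsto a^r\bar b+b\bar a^{r}$ (respectively $b\mapsto a\bar b+b\bar a$); this image is all of $\R$ as soon as $a\ne0$, so the equation is always solvable. I would finally note that, combined with Proposition \ref{class-odd-pos-coeff}, this proposition reduces the classification of odd Real structures to the two remaining families, namely $II'_c$ and the class $IV$ with negative parameter $\alpha$.
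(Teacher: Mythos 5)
Your proof is correct and follows essentially the same route as the paper: the implication $(1)\Rightarrow(2)$ is the same diagonal comparison in $\phi^2=f$ using the table of Proposition \ref{ahAuto} (the paper writes out only the class $III$ and leaves the rest as ``similar''), and the map you construct for $(2)\Rightarrow(1)$ is exactly $c\circ f^{\frac{1}{2}}$ from Remark \ref{kroot}, which is what the paper invokes via Proposition \ref{class-odd-pos-coeff}\,(\ref{ccircf1/2}) instead of re-solving the off-diagonal equation. No gaps.
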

\begin{proof} $(1)\Rightarrow(2)$:
Suppose $f\in III$, so $f\begin{pmatrix} z\\  w\end{pmatrix}=\begin{pmatrix}\delta^r z\\ \delta w\end{pmatrix}$	with $0<|\delta|<1$. An element $\phi\in \mathrm{Ah}(W)^f$ has the form 
$$
\phi\begin{pmatrix} z\\  w\end{pmatrix}= \begin{pmatrix}a\bar z+b\bar w^r\\d\bar w\end{pmatrix}.
$$
The condition $\phi^2=f$ becomes
$$\left\{\begin{array}{ccc}
\vert a\vert^2&=&\delta^r \\
a\bar b+b\bar d^r&=&0 \\
\vert d\vert^2&=&\delta 
\end{array}\right.,$$
which obviously implies $\delta>0$. The other cases are treated in a similar way.
\vspace{2mm}\\
$(2)\Rightarrow(1)$: This follows from Proposition \ref{class-odd-pos-coeff} (\ref{ccircf1/2}).
\end{proof}

The analogue of Proposition \ref{class-odd-pos-coeff} for $f\in II'_c$ follows easily by direct computation: 
\begin{pr}\label{class-odd-II'c} 
Let  $f\in II'_c$ be given by $f\begin{pmatrix}
z\\w	
\end{pmatrix}=\begin{pmatrix}
\alpha z\\\bar \alpha w	
\end{pmatrix}$,	 where $0<|\alpha|<1$ and $\alpha\not\in\R$.	 Let $\alpha^{\frac{1}{2}}$ be a square root of $\alpha$, $\bar \alpha^{\frac{1}{2}}$ be its conjugate, and let $f^{\frac{1}{2}}$ be the square root of $f$ defined by $f^{\frac{1}{2}}\begin{pmatrix}
z\\w	
\end{pmatrix}=\begin{pmatrix}
\alpha^{\frac{1}{2}} z\\ \bar \alpha^{\frac{1}{2}} w	
\end{pmatrix}$. Then  
\begin{enumerate}
\item $c'$ commutes with $f^{\frac{1}{2}}$.
\item $c'\circ 	f^{\frac{1}{2}}\in \Ah(W)^f$ satisfies $(c'\circ 	f^{\frac{1}{2}})^2=f$.
\item For any $\phi\in  \Ah(W)^f$ with $\phi^2=f$ there exists $\psi\in\Aut_h(W)^f$ such that $\phi=\psi\circ(c'\circ 	f^{\frac{1}{2}})\circ\psi^{-1}$.
\end{enumerate}
\end{pr}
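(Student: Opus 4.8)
The plan is to treat the three assertions in order; assertions (1) and (2) are direct computations, while (3) follows the blueprint of the proof of Proposition~\ref{class-odd-pos-coeff}(2), with $c$ replaced by $c'$. Throughout I write $\beta\coloneq\alpha^{\frac12}$, so that $\overline\beta=\bar\alpha^{\frac12}$ and $f^{\frac12}(z,w)=(\beta z,\overline\beta w)$, and I recall that $c'(z,w)=(\bar w,\bar z)$.

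For (1) I simply compute both composites: $c'\circ f^{\frac12}(z,w)=c'(\beta z,\overline\beta w)=(\beta\bar w,\overline\beta\bar z)$ and $f^{\frac12}\circ c'(z,w)=f^{\frac12}(\bar w,\bar z)=(\beta\bar w,\overline\beta\bar z)$, which agree. For (2), the map $c'\circ f^{\frac12}$ is anti-holomorphic (composite of the anti-holomorphic $c'$ with the holomorphic $f^{\frac12}$) and lies in $\Ah(W)^f$ because $c'$ commutes with $f$ (Example~\ref{ExStandard}) and $f^{\frac12}$ commutes with $f$ as a root of it; squaring the explicit formula gives $(c'\circ f^{\frac12})^2(z,w)=(\beta^2 z,\overline\beta^2 w)=(\alpha z,\bar\alpha w)=f(z,w)$, since $\beta^2=\alpha$.

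For (3) I set $\phi'\coloneq f^{-\frac12}\circ\phi$ and repeat the reduction of Proposition~\ref{class-odd-pos-coeff}(2). Granting that $\phi$ (being in $\Ah(W)^f$) commutes with $f^{-\frac12}$, the map $\phi'$ is anti-holomorphic, commutes with $f$, and satisfies $\phi'^2=f^{-\frac12}\circ\phi\circ f^{-\frac12}\circ\phi=f^{-1}\circ\phi^2=f^{-1}\circ f=\id_W$; thus $\phi'$ is an even Real structure on $W$ commuting with $f$. Proposition~\ref{PropEvenII'c} then supplies $\psi\in\Aut_h(W)^f$ with $\phi'=\psi\circ c'\circ\psi^{-1}$, i.e. $f^{-\frac12}\circ\phi=\psi\circ c'\circ\psi^{-1}$. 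Composing on the left with $f^{\frac12}$, and using that $\psi$ commutes with $f^{\frac12}$ and that $f^{\frac12}$ commutes with $c'$ (assertion (1)), I obtain $\phi=\psi\circ f^{\frac12}\circ c'\circ\psi^{-1}=\psi\circ(c'\circ f^{\frac12})\circ\psi^{-1}$, as required.

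The only step requiring more than substitution is the commutation input used in (3): the analogue of Remark~\ref{kroot}(\ref{Auf-k-root}) --- that $\Ah(W)^{f^{\frac12}}=\Ah(W)^f$ and $\Aut_h(W)^{f^{\frac12}}=\Aut_h(W)^f$ --- which is stated there only for real coefficients and so does not apply verbatim to $f\in II'_c$. I would dispose of this by observing that $f^{\frac12}$ itself belongs to $II'_c$, with parameter $\beta=\alpha^{\frac12}$ satisfying $0<|\beta|<1$ and $\beta\notin\R$ (as $\beta\in\R$ would force $\alpha=\beta^2\in\R$). Hence Wehler's Theorem~\ref{Wehl} and Proposition~\ref{ahAuto} describe the commutants $\Aut_h(W)^{f^{\frac12}}$ and $\Ah(W)^{f^{\frac12}}$ by exactly the same formulas $\{(z,w)\mapsto(az,dw)\}$ and $\{(z,w)\mapsto(a\bar w,d\bar z)\}$ as for $f$, giving the two equalities at once; alternatively one checks directly that $(z,w)\mapsto(a\bar w,d\bar z)$ commutes with $(z,w)\mapsto(\beta z,\overline\beta w)$. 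This is the main, and entirely routine, obstacle.
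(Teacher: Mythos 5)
Your proof is correct and follows exactly the route the paper intends: the paper states that Proposition \ref{class-odd-II'c} ``follows easily by direct computation'' as the analogue of Proposition \ref{class-odd-pos-coeff}, and your argument is precisely that reduction (setting $\phi'=f^{-\frac12}\circ\phi$ and invoking Proposition \ref{PropEvenII'c}), with the computations in (1) and (2) carried out explicitly. Your care in justifying the commutant equalities $\Aut_h(W)^{f^{\frac12}}=\Aut_h(W)^f$ and $\Ah(W)^{f^{\frac12}}=\Ah(W)^f$ for $f^{\frac12}\in II'_c$ — a point the paper leaves implicit — is a welcome addition rather than a deviation.
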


In other words, under the assumption of the proposition, for $f\in II'_c$, any odd Real structure on $H_f$ is equivalent to the odd real structure $\sigma_f$ induced by $c'\circ f^{\frac{1}{2}}$.\\

For the classification of odd   Real  structures on class IV primary Hopf surfaces with negative parameter $\alpha$ we will need a simple remark concerning the classification of quaternionic structures on  a finite dimensional real vector space. 

Let $F$ be a real vector space of dimension $4k$. Recall that a left $\H$-vector space structure on $F$ is equivalent to the data of a pair  $(I,J)\in\End(F)\times\End(F)$ such that $I^2=J^2=-\id_F$ and $I\circ J=-J\circ I$. In the presence of such a pair $(I,J)$, we put $K\coloneq I\circ J$, and we define a left quaternonic vector space structure on $F$ (extending its real space structure) by mapping  the quaternonic units $i$, $j$, $k\in\H$  to $I$, $J$, $K$ respectively.

Since two left $\H$-vector spaces of the same dimension are isomorphic, it follows that any two left $\H$-vector space structures $(I,J)$,  $(I',J')$ on $F$ are always equivalent, i.e. there exists an automorphism $l\in\GL(F)$ such that $l\circ I=I'\circ l$,  $l\circ J=J'\circ l$. In the special case $I=I'$ we obtain $l\circ I=I\circ l$, i.e. $l$ is linear with respect to the complex structure defined by $I$, and $J'=l\circ J\circ l^{-1}$. This shows that 
\begin{re}\label{RemQuat}
Let $E$ be a complex vector space of dimension $2k$ and let $J$, $J'$ be anti-linear isomorphisms such that $J^2=J'^2=-\id_E$.  Then there exists an automorphism $l\in\GL(E)$ such that $J'=l\circ J\circ l^{-1}$.
\end{re}

Consider the  anti-linear map $J:\C^{2k} \to \C^{2k}$ defined by 
$$J\begin{pmatrix} z\\w\end{pmatrix} =\begin{pmatrix}-\bar w\\ \bar  z\end{pmatrix}=\begin{pmatrix} 0 & -\id_{\C^k} \\ \id_{\C^k}& 0 \end{pmatrix}\begin{pmatrix} \overline z\\   \overline w \end{pmatrix}$$
for pairs $(z,w)\in \C^k\times\C^k=\C^{2k}$. Note that $J^2=-\id_{\C^{2k}}$.
By Remark \ref{RemQuat} we obtain the following analogue of Remark \ref{linearRealRem}:
\begin{re}\label{ClassQuatStr} Let $a:\C^{2k}\to\C^{2k}$  ba an anti-linear isomorphism  satisfying $a^2=-\id_{\C^2}$. Then there exists $l\in\GL(2k,\C)$ such that $a=l\circ J\circ l^{-1}$, i.e.   $a$ is $\GL(2k,\C)$-conjugate to $J$.
	
\end{re}
With this preparation we can prove:
\begin{pr}\label{oddIV-neg-coeff}
	Let $f:W\to\ W\in IV$ be given by $f\begin{pmatrix}z\\w\end{pmatrix}=\begin{pmatrix}\alpha z\\\alpha w\end{pmatrix}$,
where $\alpha\in(-1,0)$. Let $\alpha^{\frac{1}{2}}$ be a square root of $\alpha$, and let $f^{\frac{1}{2}}$ be the square root of $f$ defined by $f^{\frac{1}{2}}\begin{pmatrix}z\\w\end{pmatrix}=\begin{pmatrix}\alpha^{\frac{1}{2}} z\\\alpha^{\frac{1}{2}} w\end{pmatrix}$. Then
\begin{enumerate}
\item $J$ anti-commutes with $f^{\frac{1}{2}}$.
\item $J\circ f^{\frac{1}{2}}\in \Ah(W)^f$ and satisfies	 $(J\circ f^{\frac{1}{2}})^2=f$.
\item For any $\phi\in \Ah(W)^f$ with $\phi^2=f$ there exists $\psi\in \Aut_h(W)$ such that $\phi=\psi\circ(J\circ f^{\frac{1}{2}})\circ\psi^{-1}$.
\end{enumerate}

\end{pr}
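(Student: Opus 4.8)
The plan is to dispatch (1) and (2) by direct computation and to reduce (3) to the classification of quaternionic structures in Remark~\ref{ClassQuatStr}, in exact parallel with the way Proposition~\ref{class-odd-pos-coeff} reduced the positive-coefficient case to the classification of real structures via Remark~\ref{linearRealRem}.

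For (1) and (2): since $\alpha\in(-1,0)$ the root $\alpha^{\frac12}$ is purely imaginary, so $\overline{\alpha^{\frac12}}=-\alpha^{\frac12}$. Writing $\beta\coloneq\alpha^{\frac12}$ and evaluating on $(z,w)$, I would compute $J\circ f^{\frac12}(z,w)=(\beta\bar w,-\beta\bar z)$ and $f^{\frac12}\circ J(z,w)=(-\beta\bar w,\beta\bar z)$, which gives the anti-commutation $J\circ f^{\frac12}=-f^{\frac12}\circ J$ of (1). For (2), note that $f^{\frac12}$ is a scalar multiplication, hence commutes with $f$, while $J$ commutes with $f$ because $\alpha$ is real; thus $J\circ f^{\frac12}$ is an anti-holomorphic automorphism of $W$ commuting with $f$, i.e. lies in $\Ah(W)^f$. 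Using the anti-commutation together with $J^2=-\id_W$,
$$(J\circ f^{\frac12})^2=J\circ(f^{\frac12}\circ J)\circ f^{\frac12}=-J^2\circ(f^{\frac12})^2=f.$$

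For (3) I would introduce $a\coloneq f^{-\frac12}\circ\phi$ and the model $a_0\coloneq f^{-\frac12}\circ(J\circ f^{\frac12})$. By Proposition~\ref{ahAuto}, $\phi$ is the restriction to $W$ of an anti-linear automorphism $A\circ c$ of $\C^2$ with $A\in\GL(2,\C)$; since $f^{\pm\frac12}$ is multiplication by the purely imaginary scalar $\beta^{\pm1}$ and $\bar\beta=-\beta$, the map $\phi$ anti-commutes with $f^{\frac12}$. Combined with $\phi^2=f$ this gives
$$a^2=f^{-\frac12}\circ\phi\circ f^{-\frac12}\circ\phi=-f^{-1}\circ\phi^2=-\id_W,$$
and the same bookkeeping (or the direct identity $a_0=-J$) yields $a_0^2=-\id_W$. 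Hence $a$ and $a_0$ extend to anti-linear automorphisms of $\C^2$ squaring to $-\id_{\C^2}$, so Remark~\ref{RemQuat} (equivalently Remark~\ref{ClassQuatStr}) furnishes $l\in\GL(2,\C)$, inducing $\psi\in\Aut_h(W)$, with $a=\psi\circ a_0\circ\psi^{-1}$. Composing on the left with the scalar $f^{\frac12}$, which commutes with the linear $\psi$, and using $f^{\frac12}\circ a=\phi$ and $f^{\frac12}\circ a_0=J\circ f^{\frac12}$, I would conclude $\phi=\psi\circ(J\circ f^{\frac12})\circ\psi^{-1}$.

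The conceptual heart---and the only place needing care---is that conjugating $\phi$ by $f^{-\frac12}$ produces a map squaring to $-\id_W$ rather than $+\id_W$; this is forced by $\alpha<0$ making $\beta$ imaginary, and it is exactly why the quaternionic classification of Remark~\ref{ClassQuatStr} replaces the real-structure classification used in the even case. The main pitfall is the sign bookkeeping in the anti-commutation relations: conjugating $a$ directly to the model $a_0$, rather than to the standard $J$, is what keeps the final sign correct, since conjugating $a$ to $J$ would instead produce the spurious $-\psi\circ(J\circ f^{\frac12})\circ\psi^{-1}$.
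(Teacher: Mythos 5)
Your proposal is correct and follows essentially the same route as the paper: parts (1)--(2) by the same direct computation using that $\alpha^{1/2}$ is purely imaginary and $J$ is anti-linear, and part (3) by twisting $\phi$ with $f^{-1/2}$ to obtain an anti-linear map squaring to $-\id_{\C^2}$ and invoking the quaternionic classification of Remark~\ref{ClassQuatStr}. The only cosmetic difference is that you compose with $f^{-1/2}$ on the left and conjugate to $a_0=-J$ via Remark~\ref{RemQuat}, while the paper composes on the right and conjugates $\tilde\phi\circ f^{-1/2}$ directly to $J$; both versions of the sign bookkeeping are sound.
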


Therefore, under the assumptions of the proposition, any odd Real structure $\sigma$ on $H_f$ is  equivalent to $\sg_f$, where $\sg_f$ is the odd Real structure induced by $J\circ f^{\frac{1}{2}}$.

\begin{proof}
(1) It suffices to note that $f^{\frac{1}{2}}=\alpha^{\frac{1}{2}}\id_W$  with $\alpha^{\frac{1}{2}}$ pure imaginary and to recall that $J$ is anti-linear.
\vspace{2mm}\\
(2) Follows from (1) taking into account that $J^2=-\id_{\C^2}$.
\vspace{2mm}\\
(3) Let $\phi\in \Ah(W)^f$ with $\phi^2=f$. The extension $\tilde \phi $ of $\phi$ to $\C^2$ gives an anti-linear isomorphism $\tilde\phi:\C^2\to  \C^2$. Since $\tilde\phi$ anti-commutes with $f^{\pm\frac{1}{2}}$,  the condition  $\phi^2=f$ is equivalent to $(\tilde\phi\circ f^{-\frac{1}{2}})^2=-  \id_{\C^2}$.    By  Remark  \ref{ClassQuatStr}, there exists     $l\in\GL(2,\C)$  such that 
$$\tilde\phi\circ f^{-\frac{1}{2}}=l\circ J\circ  l^{-1}.$$
Denoting by $\psi$ the automorphism of $W$ induced by $l$, which obviously commutes with $f^{\frac{1}{2}}$, we obtain $\phi=\psi\circ (J\circ f^{\frac{1}{2}})\circ  \psi^{-1}$, as claimed. 	
\end{proof}

Using Propositions \ref{class-odd-pos-coeff}, \ref{oddIV-neg-coeff} we obtain the following classification theorem for odd Real structures on primary Hopf surfaces:
\begin{thry}\label{ClassOdd}   
\begin{enumerate}
\item Let $f\in III\cup  II_a\cup II_b\cup II_c$. 
\begin{enumerate} 
\item 	The following conditions are equivalent:
\begin{enumerate}
\item $H_f$ admits an odd Real structure.
\item  $f$  either has real coefficients and positive diagonal coefficients, or belongs to $II'_c$.
\end{enumerate}	
\item If one of these equivalent conditions is satisfied, any odd Real structure on $H_f$ is equivalent to the  Real structure $\sigma_f$ defined above.
\end{enumerate}
\item Let $f\in IV$  be given by $f(z,w)=(\alpha z,\alpha w)$ where $0<|\alpha|<1$. 	
\begin{enumerate}
\item  The following conditions are equivalent:
\begin{enumerate}
\item $H_f$ admits an odd Real structure.
\item  $\alpha\in\R$.
\end{enumerate}	
\item If $\alpha\in (0,1)$, any odd Real structure on $H_f$ is equivalent to  $\sigma_f$. If $\alpha\in (-1,0)$, any odd Real structure on $H_f$ is equivalent to  $\sg_f$.
\end{enumerate}

\end{enumerate}
	
\end{thry}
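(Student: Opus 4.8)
The plan is to assemble the theorem from the propositions already established, the one conceptual ingredient being a dictionary between odd Real structures on $H_f$ and the elements of $\mathrm{Ah}(W)^f$ whose square equals $f$. First I would record this dictionary precisely: an odd Real structure $\sigma$ on $H_f$ admits, by Definition \ref{even-odd-def}, a lift $\hat\sigma$ with $\hat\sigma^2=f$, and by Proposition \ref{PrHolAnti} this lift may be taken in $\mathrm{Ah}(W)^f$; conversely any $\phi\in\mathrm{Ah}(W)^f$ with $\phi^2=f$ commutes with $f$ and hence descends to an anti-holomorphic self-map of $H_f$ whose square is induced by $f$, that is, to an odd Real structure. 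I would also note that an element $\psi\in\Aut_h(W)^f$ descends, via the identification of Theorem \ref{Wehl}(4), to $g\in\Aut_h(H_f)$, so that a conjugacy $\phi=\psi\circ\phi_0\circ\psi^{-1}$ on $W$ descends to an equivalence $\sigma=g\circ\sigma_0\circ g^{-1}$ on $H_f$. With this dictionary the existence statements become ``$\mathrm{Ah}(W)^f$ contains an element squaring to $f$'' and the classification statements become ``all such elements are $\Aut_h(W)^f$-conjugate to a distinguished one''.

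For Part (1) I would argue the equivalence (a) as follows. An odd Real structure is in particular an anti-holomorphic automorphism, so its existence forces, by Proposition \ref{AH-autom}, that $f$ have real coefficients or lie in $II'_c$. If $f\in II'_c$ this is already condition (ii); if instead $f$ has real coefficients, then the dictionary turns (i) into the existence of $\phi\in\mathrm{Ah}(W)^f$ with $\phi^2=f$, which by the (unlabelled) Proposition immediately preceding Proposition \ref{class-odd-II'c} is equivalent to the positivity of the diagonal coefficients, the remaining case of (ii). Conversely, if $f$ has real and positive diagonal coefficients, Proposition \ref{class-odd-pos-coeff}(1) exhibits $c\circ f^{\frac{1}{2}}$ as an element of $\mathrm{Ah}(W)^f$ squaring to $f$, and if $f\in II'_c$ then Proposition \ref{class-odd-II'c}(2) exhibits $c'\circ f^{\frac{1}{2}}$; so (ii) implies (i). For the classification (b), Proposition \ref{class-odd-pos-coeff}(2) (real, positive diagonal case) and Proposition \ref{class-odd-II'c}(3) (case $f\in II'_c$) show that every admissible $\phi$ is $\Aut_h(W)^f$-conjugate to $c\circ f^{\frac{1}{2}}$, respectively $c'\circ f^{\frac{1}{2}}$, whence by the descent remark every odd Real structure is equivalent to $\sigma_f$.

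For Part (2) the argument is parallel but shorter. Since a class $IV$ contraction is never of type $II'_c$, Proposition \ref{AH-autom} shows that $H_f$ carries an anti-holomorphic automorphism exactly when $\alpha\in\R$, giving $(i)\Rightarrow\alpha\in\R$; and for $\alpha\in(0,1)$ Proposition \ref{class-odd-pos-coeff}(1), respectively for $\alpha\in(-1,0)$ Proposition \ref{oddIV-neg-coeff}(2), produces an element of $\mathrm{Ah}(W)^f$ squaring to $f$, giving the converse and completing (a). For the classification, Proposition \ref{class-odd-pos-coeff}(2) conjugates every admissible $\phi$ to $c\circ f^{\frac{1}{2}}$ when $\alpha\in(0,1)$, and Proposition \ref{oddIV-neg-coeff}(3) conjugates it to $J\circ f^{\frac{1}{2}}$ when $\alpha\in(-1,0)$; descending these conjugacies gives equivalence to $\sigma_f$, respectively $\sg_f$.

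The substantive analytic content has already been absorbed into the cited propositions, so the only point demanding genuine care is the dictionary itself --- specifically checking that the descended involution squares to the identity rather than to a nontrivial power of $f$, and that equivalence of Real structures on $H_f$ corresponds exactly to $\Aut_h(W)^f$-conjugacy modulo $\langle f\rangle$ of their lifts. This well-definedness rests on the parity invariant of Proposition \ref{liftsOfsigma}, which guarantees both that ``odd'' is a property of $\sigma$ alone and that a lift realizing $\hat\sigma^2=f$ can be selected; once this is secured, the theorem follows by the case-by-case matching against the tabulated propositions, and I do not anticipate any further obstacle.
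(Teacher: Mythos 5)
Your proposal is correct and follows the same route as the paper, which states Theorem \ref{ClassOdd} as a direct consequence of Propositions \ref{AH-autom}, \ref{class-odd-pos-coeff}, \ref{class-odd-II'c}, \ref{oddIV-neg-coeff} and the unlabelled proposition on positivity of diagonal coefficients, without writing out the assembly. The only added value in your write-up is that you make explicit the descent dictionary (lifts squaring to $f$ versus odd Real structures, and $\Aut_h(W)^f$-conjugacy descending to equivalence on $H_f$), which the paper leaves implicit; this is sound and rests correctly on Proposition \ref{liftsOfsigma} and Proposition \ref{PrHolAnti}.
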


Consider again  the primary   Hopf  $n$-fold 
$H_{f_\alpha}\coloneq  {\C^{n}\setminus\{0\}}/{\langle f_\alpha\rangle}$.
Using  Remark \ref{ClassQuatStr} and defining  the odd Real structures $\sigma_{f_\alpha}$ (for $\alpha \in(0,1)$), $\sg_{f_\alpha}$  (for $\alpha\in (-1,0)$ and $n$ even) as above, we obtain in a similar way the classification of odd Real structures on  $H_{f_\alpha}$:
\begin{re} Let $H_{f_\alpha}$ be the primary Hopf $n$-fold  associated with the holomorphic contraction $f_\alpha$, where $0<|\alpha|<1$. 
\begin{enumerate}
\item Suppose $n$ is odd. 	$H_{f_\alpha}$ admits an odd Real structure if and only if $\alpha\in(0,1)$.  If this is the case, any odd Real structure on  $H_{f_\alpha}$ is equivalent  to $\sigma_{f_\alpha}$.
\item Suppose $n$ is even. $H_{f_\alpha}$ admits an odd Real structure if and only if $\alpha\in\R$.  If $\alpha\in (0,1)$ any odd Real structure on  $H_{f_\alpha}$ is equivalent  to $\sigma_{f_\alpha}$. If $\alpha\in (-1,0)$ any odd Real structure on  $H_{f_\alpha}$ is equivalent  to $\sg_{f_\alpha}$.
\end{enumerate}
 	 
\end{re}

\section{The automorphism group and the Real Picard group}
\subsection{The automorphism group}
 
 For the automorphism group of an even Real primary Hopf surface $(H_f,s_f)$ we have the following result which follows easily from Theorem \ref{Wehl}:
 \begin{thry}\label{AutEven}
Let $f\in  {IV}\cup  {III}\cup  {II_a}\cup  {II_b}\cup  {II_c}$. 
\begin{enumerate}
\item  Suppose is with real coefficients. The group  $\Aut_h(W)^{f,c}$ of holomorphic automorphisms of $W$  commuting with $f$ and the standard conjugation $c$ is given by the table below:

$$  \begin{array}  {|c|c|}
\hline   & \\ [-0.8em]
\hbox{The class of } f &  \Aut_h(W)^{f,c}   
 \\ [0.1em]
 \hline  & \\ [-0.8em]
IV & \GL(2,\R)  
\\ [0.2em]
 \hline  & \\ [-0.8em]
III &  \left\{\begin{pmatrix}
z\\w 	
\end{pmatrix}\mapsto\ \begin{pmatrix}az+bw^r\\dw\end{pmatrix}\vline\  a\in\R^*,\ d\in\R^*, b\in \R\right\}    
\\ [0.8em]
 \hline  & \\ [-0.8em]
II_a  &\left\{\begin{pmatrix}
z\\w 	
\end{pmatrix}\mapsto\ \begin{pmatrix}a^rz+bw^r\\aw\end{pmatrix}\vline\  a\in \R^*, b\in \R\right\}     
\\ [0.8em]
 \hline  & \\ [-0.8em]
II_b &  \left\{\begin{pmatrix}
z\\w 	
\end{pmatrix}\mapsto\ \begin{pmatrix}az+bw\\aw\end{pmatrix}\vline\  a\in \R^*, b\in  \R\right\}   
\\ [0.8em]
\hline  & \\ [-0.8em]
II_c & \left\{\begin{pmatrix}
z\\w 	
\end{pmatrix}\mapsto\ \begin{pmatrix}az\\dw\end{pmatrix}\vline\  a\in\R^*,\ d\in\R^*\right\}   
\\ [0.8em]
\hline
\end{array}
$$
\item Suppose $f\in II'_c$. Then
$$
\Aut_h(W)^{f,c'}=\left\{\begin{pmatrix}
z\\w 	
\end{pmatrix}\mapsto\ \begin{pmatrix}az\\ \bar a w\end{pmatrix}\vline\  a\in\C^*\right\}.  
$$ 
\item In each case the cyclic group $\langle f\rangle$ is a central subgroup of $\Aut_h(W)^{f,c}$, respectively $\Aut_h(W)^{f,c'}$, and the automorphism group $\Aut_h(H_f,s_f)$ is identified with the quotient  $\Aut_h(W)^{f,c}/\langle f\rangle$, respectively $\Aut_h(W)^{f,c'}/\langle f\rangle$.
\end{enumerate} 
\end{thry}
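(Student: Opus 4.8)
The plan is to obtain every case by intersecting Wehler's description of $\Aut_h(W)^f$ (Theorem \ref{Wehl}(3)) with the commutation condition $\psi\circ c=c\circ\psi$ (respectively $\psi\circ c'=c'\circ\psi$), and then to deduce the quotient description in (3) from the lifting principle of Proposition \ref{PrHolAnti}. Parts (1) and (2) are essentially bookkeeping; the only genuine content lies in (3).

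For (1), I would start from the general form of an element $\psi\in\Aut_h(W)^f$ read off from the table in Theorem \ref{Wehl}; in each of the five classes this is a polynomial self-map of $W$ whose coefficients $A,B,D,\dots$ run over $\C$ or $\C^*$. A polynomial self-map commutes with the standard conjugation $c$ if and only if all of its coefficients are real: writing out $\psi\circ c$ and $c\circ\psi$ and comparing the two polynomial expressions coefficient by coefficient forces each coefficient to equal its complex conjugate. Imposing this in each row of Wehler's table simply replaces $\C^*$ by $\R^*$ and $\C$ by $\R$, which is exactly the asserted table for $\Aut_h(W)^{f,c}$. This is a direct case-by-case computation with no obstacle. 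For (2) the same method applies with $c$ replaced by $c'\colon(z,w)\mapsto(\bar w,\bar z)$. Since $f\in II'_c\subset II_c$, an element $\psi\in\Aut_h(W)^f$ has the diagonal form $(z,w)\mapsto(az,dw)$ with $a,d\in\C^*$; comparing $\psi\circ c'$ with $c'\circ\psi$ yields the single relation $d=\bar a$, which is precisely the stated description.

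For (3) I would first record that $f$ itself lies in $\Aut_h(W)^{f,c}$ (respectively $\Aut_h(W)^{f,c'}$): $f$ has real coefficients, hence commutes with $c$, and in the $II'_c$ case $f$ commutes with $c'$ as already observed in Example \ref{ExStandard}. Since $\langle f\rangle$ is central in the larger group $\Aut_h(W)^f$ by Theorem \ref{Wehl}(4), it remains central in the subgroup. For the identification I would use the lifting correspondence: by Proposition \ref{PrHolAnti} every holomorphic automorphism $g$ of $H_f$ admits a lift $\hat g\in\Aut_h(W)^f$, unique modulo $\langle f\rangle$, and descends from it. Writing $\kappa$ for $c$ (respectively $c'$), one checks directly that $\hat g\circ\kappa$ is a lift of $g\circ s_f$ while $\kappa\circ\hat g$ is a lift of $s_f\circ g$. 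Hence $g\circ s_f=s_f\circ g$ forces $\hat g\circ\kappa=f^k\circ\kappa\circ\hat g$ for some $k\in\Z$, i.e. $\hat g\circ\kappa\circ\hat g^{-1}=f^k\circ\kappa$.

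The key and only nonroutine step is then to show $k=0$. The left-hand side $\hat g\circ\kappa\circ\hat g^{-1}$ is conjugate to the involution $\kappa$, hence is itself an involution; and since $\kappa$ commutes with $f$ and satisfies $\kappa^2=\id_W$, one computes $(f^k\circ\kappa)^2=f^k\circ\kappa\circ f^k\circ\kappa=f^{2k}\circ\kappa^2=f^{2k}$. Setting this equal to $\id_W$ and using that $f$ has infinite order gives $k=0$, so $\hat g$ commutes with $\kappa$ and therefore $\hat g\in\Aut_h(W)^{f,c}$ (respectively $\Aut_h(W)^{f,c'}$). Conversely any such $\hat g$ descends to an automorphism of $H_f$ commuting with $s_f$, and the kernel of this surjection is exactly $\langle f\rangle$; this yields the asserted isomorphism $\Aut_h(H_f,s_f)\simeq\Aut_h(W)^{f,c}/\langle f\rangle$. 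I expect this involutivity argument to be the main obstacle, everything else being a routine consequence of Theorem \ref{Wehl} and Proposition \ref{PrHolAnti}.
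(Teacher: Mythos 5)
Your proposal is correct and follows the same skeleton as the paper: parts (1) and (2) are obtained by intersecting Wehler's table (Theorem \ref{Wehl}) with the commutation condition, and part (3) is reduced, via the lifting correspondence of Proposition \ref{PrHolAnti}, to showing that $\hat g\circ\kappa=f^{k}\circ\kappa\circ\hat g$ forces $k=0$. The one place where you diverge is how that last step is closed: the paper disposes of it with unspecified case-by-case ``elementary computations'' (presumably writing $\hat g$ in the explicit polynomial form of each class and comparing coefficients), whereas you give a uniform structural argument --- $\hat g\circ\kappa\circ\hat g^{-1}$ is a conjugate of the involution $\kappa$ and hence an involution, while $(f^{k}\circ\kappa)^{2}=f^{2k}$ because $\kappa$ commutes with $f$, so $f^{2k}=\id_W$ and $k=0$ since $f$ is a contraction of infinite order. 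Your version is cleaner and class-independent, and it correctly uses the two facts it needs (that $\kappa^2=\id_W$ and that $\kappa$ commutes with $f$, the latter holding because $f$ has real coefficients, respectively because $f\in II'_c$ commutes with $c'$ as in Example \ref{ExStandard}). The remaining bookkeeping (centrality of $\langle f\rangle$ inherited from Theorem \ref{Wehl}(4), surjectivity of $\Aut_h(W)^{f,c}\to\Aut(H_f,s_f)$ with kernel $\langle f\rangle$) is handled correctly. No gaps.
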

\begin{proof}
The claims (1), (2) follow directly from Theorem \ref{Wehl}. For (3) it suffices to prove that a holomorphic automorphism $\varphi\in\Aut_h(H_f)$ induced by $\hat \varphi\in 	\Aut_h(W)^f$ commutes with $s_f$ if and only if $\hat\varphi$ commutes with $c$, respectively $c'$. In other words we have to prove that 
$c\circ \hat \varphi= \hat \varphi\circ c\circ f^k$, respectively $c'\circ \hat \varphi= \hat \varphi\circ c'\circ f^k$ (with $k\in\Z)$, then $k=0$. 
This follows by elementary computations. 
\end{proof}

Using Theorem \ref{AutEven} we can describe the automorphism groups of even Real primary Hopf surfaces in terms of (semi-direct products of) classical groups. 
For instance, for $f\in IV$, we obtain $\Aut(H_f,s_f)=\GL(2,\R)/\langle \alpha I_2\rangle$. \vspace{1mm}

For $f\in III$ the group $\Aut_h(W)^{f,c}$ coincides with the group 
$$ 
\big\{ g_{a,d,b}|\  (a,d,b)\in\R^*\times\R^*\times\R\big\},\hbox{ where }   g_{a,d,b}\begin{pmatrix}z\\w\end{pmatrix}=\begin{pmatrix}az+abw^r\\dw\end{pmatrix}.
$$
This group can be identified with the semi-direct product $(\R^*\times\R^*)\ltimes_{\rho_r} \R $ associated with the morphism $\rho_r:(\R^*\times\R^*)\to \GL(1,\R)$  given by 
$$
\rho_r(a,d)(b)=ad^{-r}b. 
$$
Since $\rho_r(\delta^r,\delta)=\id_\R$, it follows that $\rho_r$ descends to a morphism 
$$\hat \rho_r: \qmod{\R^*\times\R^*}{\langle (\delta^r,\delta)\rangle}\to \GL(1,\R).$$

With this remark we obtain:
\begin{co}
Let $f\in III$ with real coefficients $\delta^r$, $\delta$. The automorphism group $\Aut(H_f,s_f)$ can be identified with the  semi-direct product 
$$
\left[\qmod{\R^*\times\R^*}{\langle (\delta^r,\delta)\rangle}\right]\ltimes_{\hat \rho_r} \R.
$$	
\end{co}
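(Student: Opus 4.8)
The plan is to read off the result from two facts already established just above the statement. By Theorem~\ref{AutEven}(3), $\Aut(H_f,s_f)$ is the quotient $\Aut_h(W)^{f,c}/\langle f\rangle$; and by the discussion preceding the corollary, $\Aut_h(W)^{f,c}=\{g_{a,d,b}\mid(a,d,b)\in\R^*\times\R^*\times\R\}$ is identified with the semidirect product $(\R^*\times\R^*)\ltimes_{\rho_r}\R$, under which the translation part $\{g_{1,1,b}\}$ is the normal factor $\R$ and the diagonal part $\{g_{a,d,0}\}$ is the factor $\R^*\times\R^*$ acting through $\rho_r$. The whole task therefore reduces to identifying the image of $\langle f\rangle$ in this product and passing to the quotient.

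First I would locate $f$. Since $g_{a,d,0}(z,w)=(az,dw)$, the contraction $f(z,w)=(\delta^r z,\delta w)$ is exactly $g_{\delta^r,\delta,0}$, so $f^k=g_{\delta^{rk},\delta^k,0}$ and $\langle f\rangle$ corresponds to $\langle(\delta^r,\delta)\rangle\times\{0\}$, sitting entirely inside the acting factor $\R^*\times\R^*$ with trivial translation component. Next I would observe that this subgroup is central in the semidirect product: it is central in the abelian factor $\R^*\times\R^*$, and it lies in $\ker\rho_r$ because $\rho_r(\delta^r,\delta)(b)=\delta^r\delta^{-r}b=b$ --- the very computation $\rho_r(\delta^r,\delta)=\id_\R$ recorded before the statement, which is what makes $\rho_r$ descend to $\hat\rho_r$. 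Being central, $\langle(\delta^r,\delta)\rangle\times\{0\}$ is normal, and I would then apply the elementary descent principle that for $K\subseteq\ker\rho\cap Z(H)$ one has $(H\ltimes_\rho N)/(K\times\{e\})\cong(H/K)\ltimes_{\bar\rho}N$, with $\bar\rho$ the induced action. With $H=\R^*\times\R^*$, $N=\R$, $K=\langle(\delta^r,\delta)\rangle$ and $\bar\rho=\hat\rho_r$ this yields
$$\Aut(H_f,s_f)\cong\Aut_h(W)^{f,c}/\langle f\rangle\cong\left[\qmod{\R^*\times\R^*}{\langle(\delta^r,\delta)\rangle}\right]\ltimes_{\hat\rho_r}\R,$$
which is the assertion.

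The argument is short because its substance was absorbed into the reparametrization and the semidirect-product identification made before the statement; the only points demanding care are bookkeeping, not difficulty. The main (mild) obstacle is to be sure that $\langle f\rangle$ genuinely lands in the acting factor and is central, so that the quotient inherits a semidirect-product structure at all: were the translation component of $f$ nonzero, the quotient would not split this transparently. I would therefore state the descent lemma precisely and verify the centrality of $\langle f\rangle$ directly (equivalently, reuse the centrality of $\langle f\rangle$ in $\Aut_h(W)^{f,c}$ already asserted in Theorem~\ref{AutEven}(3)). No analytic input is needed here; everything is elementary group theory applied to the automorphism group computed earlier.
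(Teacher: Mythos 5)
Your proposal is correct and follows essentially the same route as the paper: the paper obtains the corollary directly from Theorem \ref{AutEven}(3) together with the identification $\Aut_h(W)^{f,c}\simeq(\R^*\times\R^*)\ltimes_{\rho_r}\R$ and the observation $\rho_r(\delta^r,\delta)=\id_\R$, leaving the quotient step implicit. You merely make explicit the bookkeeping the paper omits — that $\langle f\rangle$ sits as $\langle(\delta^r,\delta)\rangle\times\{0\}$ inside the acting factor, is central and lies in $\ker\rho_r$, so the descent lemma applies — which is a faithful filling-in rather than a different argument.
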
 

Similar descriptions are obtained for $f\in II_a$ and  $f\in II_b$. For $f\in II_c$ with real coefficients we have obviously  $\Aut(H_f,s_f)=(\R^*\times\R^*)/\langle(\alpha,\delta)\rangle$, and for $f\in II'_c$ we obtain  $\Aut(H_f,s_f)=\C^*/\langle  \alpha\rangle$, which is a 1-dimensional complex torus. 

\vspace{2mm}

The automorphism group of the odd Real  Hopf surfaces is given by the following
\begin{thry}\label{AutOdd} 
Let $f\in IV\cup III\cup II_a\cup II_b\cup II_c$	.
\begin{enumerate}
\item Suppose $f$ has real coefficients and positive diagonal coefficients. Then
$$
\Aut(H_f,\sigma_f)=\Aut(H_f,s_f).
$$
\item  Suppose $f\in II'_c$. Then again
$$
\Aut(H_f,\sigma_f)=\Aut(H_f,s_f).
$$
\item \label{neg-alpha} Suppose that $f\in IV$ with negative diagonal coefficient $\alpha$. Then
$$
\Aut(H_f,\sg_f)=\qmod{\left\{\begin{pmatrix} a &-\bar b\\ b & \bar a \end{pmatrix}\vline\ (a, b)\in \C^2\setminus\{0\} \right\} }{\langle \alpha I_2\rangle}.       
$$
\end{enumerate}
\end{thry}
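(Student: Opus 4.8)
The plan is to reduce, in every case, the commutation of a holomorphic automorphism with the prescribed odd Real structure to a relation between lifts on $W$, and then to solve that relation explicitly. Let $\varphi\in\Aut_h(H_f)$ have lift $\hat\varphi\in\Aut_h(W)^f$ (Proposition \ref{PrHolAnti}), and let $\hat\sigma\in\Ah(W)^f$ be the chosen lift of the odd Real structure. Since $\Aut_H(W)=\langle f\rangle$, and $f$ is central and commutes with both $\hat\varphi$ and $\hat\sigma$, projecting through $\pi$ shows that $\varphi$ commutes with the Real structure if and only if
\[
\hat\varphi\circ\hat\sigma=\hat\sigma\circ\hat\varphi\circ f^{k}\qquad\text{for some }k\in\Z .
\]
So in each case I would write down this relation and decide which $\hat\varphi$, and which $k$, can satisfy it.

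For (1) and (2) the point is that $\hat\sigma=c\circ f^{\frac{1}{2}}$ (respectively $c'\circ f^{\frac{1}{2}}$), where $f^{\frac{1}{2}}$ commutes with $c$ (respectively $c'$) and, because $\Aut_h(W)^f=\Aut_h(W)^{f^{\frac{1}{2}}}$ by Remark \ref{kroot}(\ref{Auf-k-root}), also with $\hat\varphi$. Substituting and cancelling the central invertible factor $f^{\frac{1}{2}}$ from both sides, the displayed relation collapses to
\[
\hat\varphi\circ c=c\circ\hat\varphi\circ f^{k},
\]
which is exactly the defining relation for $\Aut(H_f,s_f)$ treated in Theorem \ref{AutEven}, whose proof shows that necessarily $k=0$ (so $\hat\varphi$ commutes with $c$). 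Hence the subgroup of $\Aut_h(H_f)$ commuting with $\sigma_f$ coincides with the one commuting with $s_f$, giving $\Aut(H_f,\sigma_f)=\Aut(H_f,s_f)$; the class $II'_c$ is identical with $c'$ replacing $c$. No computation beyond Theorem \ref{AutEven} is needed.

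Part (3) is genuinely different, since $\hat\sigma=J\circ f^{\frac{1}{2}}$ with $f^{\frac{1}{2}}=\alpha^{\frac{1}{2}}\,\id_W$ a \emph{pure imaginary} scalar which $J$ \emph{anti}-commutes with (Proposition \ref{oddIV-neg-coeff}). As $f\in IV$ we have $\Aut_h(W)^f=\GL(2,\C)$, so $\hat\varphi$ is a matrix $A$. Inserting $\hat\sigma=J\circ f^{\frac{1}{2}}$ into the relation and using the anti-linearity of $J$ together with $\overline{\alpha^{k}}=\alpha^{k}$, the scalar $\overline{\alpha^{\frac{1}{2}}}$ cancels and the relation becomes the linear identity $A\circ J=\alpha^{k}\,J\circ A$. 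Writing $J=M\circ c$ with $M=\left(\begin{smallmatrix}0&-1\\1&0\end{smallmatrix}\right)$ this reads $AM=\alpha^{k}M\bar A$; taking determinants gives $\det A=\alpha^{2k}\overline{\det A}$, and comparing moduli forces $|\alpha|^{2k}=1$, hence $k=0$ since $0<|\alpha|<1$. With $k=0$ the identity $AM=M\bar A$ is solved entrywise and yields precisely the matrices $\begin{pmatrix} a & -\bar b\\ b & \bar a\end{pmatrix}$ (the standard image of $\H^{*}$ in $\GL(2,\C)$, automatically invertible because $\det=|a|^{2}+|b|^{2}>0$). Dividing by the central subgroup $\langle f\rangle=\langle\alpha I_2\rangle$ gives the stated group.

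The one step I expect to be delicate is ruling out $k\neq0$ in part (3): this is exactly where the contraction hypothesis $0<|\alpha|<1$ enters, through the modulus computation $|\alpha|^{2k}=1$, and it is what pins the relation down to the single value $k=0$ and hence to the quaternionic group. Once $k=0$ is established, solving $AM=M\bar A$ and recognising $\H^{*}$ is routine.
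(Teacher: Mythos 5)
Your proof is correct and follows essentially the same route as the paper, which simply says to argue as in Theorem \ref{AutEven} by elementary computations: lift to $W$, reduce commutation with the Real structure to the relation $\hat\varphi\circ\hat\sigma=\hat\sigma\circ\hat\varphi\circ f^{k}$, force $k=0$ (via the determinant/modulus argument in case (3), where the contraction hypothesis is indeed the key point), and solve the resulting linear identity. Your computations supply exactly the details the paper leaves implicit; the matrices you obtain from $AM=M\bar A$ are $\left(\begin{smallmatrix} a & b\\ -\bar b & \bar a\end{smallmatrix}\right)$, which is the same set as the $\left(\begin{smallmatrix} a & -\bar b\\ b & \bar a\end{smallmatrix}\right)$ in the statement after relabelling.
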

 \begin{proof}
 Use  similar arguments, based on elementary computations, as in the proof of Theorem \ref{AutEven}.	
 \end{proof}

 The group $\left\{\begin{pmatrix} a &-\bar b\\ b & \bar a \end{pmatrix}\vline\ (a, b)\in \C^2\setminus\{0\} \right\}$ can be identified with the subgroup $\R^*_+\SU(2)$ of $\GL(2,\C)$. This subgroup is isomorphic with $\H^*$ via the map:
 % %

$$
 z+jw\mapsto  \begin{pmatrix} z &-\bar w\\ w & \bar z \end{pmatrix}.
 $$
  Therefore in case (\ref{neg-alpha}) we have
\begin{equation}\label{NewQuot} 
 \Aut(H_f,\sg_f)\simeq \qmod{\R^*_+\SU(2)}{\langle \alpha I_2\rangle}.
\end{equation}
 The right hand quotient in (\ref{NewQuot}) can be written as the quotient of ${\R^*_+  \SU(2)}/{\langle \alpha^2 I_2\rangle}$ by the order 2 group $\langle \alpha I_2\rangle/\langle \alpha^2 I_2\rangle$.   Via the Lie group isomorphism   
 $$\Phi: \R^*_+\SU(2)\to \R^*_+\times\SU(2), \  \Phi(A)=( \det(A)^{\frac{1}{2}},\det(A)^{-\frac{1}{2}}A)$$
 the matrices $\alpha I_2$, $\alpha^2I_2$ are mapped to $(|\alpha|,-I_2)$, $(|\alpha|^2,I_2)$ respectively. Therefore $\Phi$ induces an isomorphism 
$$
\phi:\qmod{\R^*_+  \SU(2)}{\langle \alpha^2 I_2\rangle}\textmap{\simeq} \qmod{\R^*_+ \times \SU(2)}{\langle (\alpha^2, I_2)\rangle}=\qmod{\R^*_+}{\langle \alpha^2\rangle}\times \SU(2),
$$
and the image of $\alpha I_2$ in the right hand group is $([|\alpha|],-I_2)$. Identifying ${\R^*_+}/{\langle \alpha^2\rangle}$ with $S^1$ via the isomorphisms $[\rho]\mapsto e^{\pi i\frac{\ln(\rho)}{\ln|\alpha|}}$,  and noting that the image of $[|\alpha|]$ in $S^1$  via this identification is $-1$, we   obtain an isomorphism 
$$\qmod{\R^*_+\SU(2)}{\langle \alpha I_2\rangle}\textmap{\simeq} \qmod{S^1\times \SU(2)}{\langle (-1,-I_2)\rangle}.$$
 Therefore:
  \begin{co}\label{Spinc(3)}
Let $\alpha\in (-1,0)$ and $f\edf \alpha I_2$. The automorphism group of the odd Real 	Hopf surface $(H_f,\sg_f)$ is naturally isomorphic to the group 
$$\Spin^c(3)=S^1\times_{\Z_2} \Spin(3)=S^1\times_{\Z_2} \SU(2).$$
\end{co}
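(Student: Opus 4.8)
The plan is to take the chain of group isomorphisms assembled in the discussion preceding the statement and to recognise its endpoint as the standard model of $\Spin^c(3)$. By Theorem \ref{AutOdd}(\ref{neg-alpha}) and the subsequent identifications, the group $\Aut(H_f,\sg_f)$ has already been brought, via (\ref{NewQuot}), to the form $\qmod{\R^*_+\SU(2)}{\langle \alpha I_2\rangle}$, and then, through the isomorphism $\Phi$ together with the parametrisation $[\rho]\mapsto e^{\pi i\ln(\rho)/\ln|\alpha|}$ of $\R^*_+/\langle\alpha^2\rangle$, to the quotient $\qmod{S^1\times\SU(2)}{\langle(-1,-I_2)\rangle}$. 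Hence the only remaining task is to match this last quotient with the definition of $\Spin^c(3)$.

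First I would recall that $\Spin^c(3)=S^1\times_{\Z_2}\Spin(3)$ is, by definition, the quotient of $S^1\times\Spin(3)$ by the diagonal central subgroup $\Z_2=\langle(-1,\varepsilon)\rangle$, where $-1\in S^1$ is the element of order two and $\varepsilon\in\Spin(3)$ is the nontrivial element of the kernel of the double cover $\Spin(3)\to\SO(3)$. Next I would invoke the exceptional isomorphism $\Spin(3)\simeq\SU(2)$ and the fact that under it $\varepsilon$ corresponds to $-I_2$, the unique nontrivial central element of $\SU(2)$. Substituting gives $\Spin^c(3)\simeq\qmod{S^1\times\SU(2)}{\langle(-1,-I_2)\rangle}$, which is exactly the group produced above; composing the two isomorphisms yields the asserted natural isomorphism $\Aut(H_f,\sg_f)\simeq\Spin^c(3)$.

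The substance of the argument is therefore contained in the preceding computation, and the genuinely delicate point that it settles is that the generator $\alpha I_2$ of the subgroup by which one quotients maps to the diagonal element $(-1,-I_2)$, rather than to $(1,-I_2)$ or $(-1,I_2)$. This is precisely what distinguishes $\Spin^c(3)$ from the trivial products $S^1\times\SU(2)$ and $S^1\times\SO(3)$, and it rests on two sign computations: that $\Phi$ sends $\alpha I_2$ to $(|\alpha|,-I_2)$ because $\alpha<0$ forces $\det(\alpha I_2)^{-\frac{1}{2}}\,\alpha I_2=-I_2$, and that the chosen identification $\R^*_+/\langle\alpha^2\rangle\simeq S^1$ sends $[|\alpha|]$ to $e^{\pi i}=-1$. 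Once these two signs are in hand the identification with $\Spin^c(3)$ is immediate from the definition.
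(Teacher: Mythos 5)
Your proposal is correct and follows essentially the same route as the paper: the corollary is indeed obtained there as the endpoint of the chain $\Aut(H_f,\sg_f)\simeq\R^*_+\SU(2)/\langle\alpha I_2\rangle\simeq(S^1\times\SU(2))/\langle(-1,-I_2)\rangle$, and your two sign checks (that $\det(\alpha I_2)^{-\frac{1}{2}}\alpha I_2=-I_2$ because $\alpha<0$, and that $[|\alpha|]\mapsto e^{\pi i}=-1$ under the chosen identification $\R^*_+/\langle\alpha^2\rangle\simeq S^1$) are exactly the points the paper's computation establishes. Nothing is missing.
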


\subsection{The Real Picard group of a Real primary  Hopf surface}

For a class VII surface $X$ the canonical Lie group morphism
\begin{equation}\label{isoPic}
\Hom (\pi_1(X,x_0),\C^*)=\Hom(H_1(X,\Z),\C^*)\to \Pic(X)	
\end{equation}
is injective and its image is the subgroup $\Pic^T(X)$ of isomorphism classes of holomorphic line bundles with torsion Chern class (see \cite[Remark 3.2.3]{Te}). For a primary Hopf surface $X$ we have $H^2(X,\Z)=\{0\}$, so  $\Pic^0(X)=\Pic^T(X)=\Pic(X)$, so (\ref{isoPic}) is an isomorphism. Since $\pi_1(X,x_0)\simeq \Z$, we obtain an isomorphism 
$$\lambda:\C^*=\Hom(\pi_1(X,x_0),\C^*)\to\Pic(X)$$
which can be obtained explicitly as follows. Suppose $X=H_f$ for a holomorphic contraction $f\in\Aut_h(W)$ (see section \ref{WehlClass}). For $\zeta\in\C^*$ put 
$$
L_\zeta\coloneq \qmod{W\times\C}{\langle f_\zeta\rangle},
$$
where $f_\zeta: W\times\C\to W\times\C$ is the fiberwise linear automorphism 
$$
f_\zeta (x,z)\coloneq (f(x),\zeta z).
$$
Endowed  with the obvious surjective submersion  $L_\zeta\to H_f$ given by 
$$[(x,z)]_{\langle f_\zeta\rangle}\mapsto [x]_{\langle f\rangle}\,,$$
$L_\zeta$ is naturally a holomorphic line bundle on $H_f$.  Recall (see for instance \cite[Section 2.2]{Te} that
\begin{re}\label{lambdaLzeta}
The map $ \lambda:\C^*\to \Pic (H_f)$  defined by $\lambda(\zeta)=[L_\zeta]$ is a Lie group isomorphism. 	
\end{re}

The following proposition shows that, for  any $f\in IV\cup III\cup II_a\cup II_b\cup II_c$ and any Real structure $s$ on $H_f$,  the anti-holomorphic involutive isomorphism 
$$\bar s^*: \Pic(H_f)\to \Pic(H_f)$$
 induced by  $s$ (see section \ref{intro}) is given by the same formula.  

\begin{pr}\label{sbar*}
Let $H$ be a primary Hopf surface, and let $s\in \Ah(H)$ (not necessarily involutive).   Then for any $\zeta\in \C^*$ we have
$$\bar s^*([L_\zeta])\simeq [L_{\bar \zeta}].
$$ 
\end{pr}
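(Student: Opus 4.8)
The plan is to unwind the definition of $\bar s^*$ via the explicit model of the line bundles $L_\zeta$ and track how an anti-holomorphic lift of $s$ acts on the total space. First I would recall that $\bar s^*([L])=[s^*(\bar L)]$, where $\bar L$ is regarded as a holomorphic bundle on $\bar H$ and pulled back via the holomorphic map $s\colon H\to\bar H$. Applying this to $L=L_\zeta$, the key observation is that conjugation turns the fiberwise multiplication by $\zeta$ into multiplication by $\bar\zeta$: passing from $L_\zeta$ to $\bar{L}_\zeta$ replaces the cocycle $f_\zeta(x,z)=(f(x),\zeta z)$ by its complex conjugate, so $\bar L_\zeta$ is, as a holomorphic bundle on $\bar H$, given by the quotient of $\bar W\times\C$ by $(x,z)\mapsto(\bar f(x),\bar\zeta z)$ in the appropriate sense.

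Next I would produce the lift explicitly. By Proposition \ref{PrHolAnti}, $s$ admits an anti-holomorphic lift $\hat s\colon W\to W$ commuting with $f$. The plan is to define a bundle map $\hat s_\zeta\colon W\times\C\to W\times\C$ by $\hat s_\zeta(x,z)\edf(\hat s(x),\bar z)$ (or a suitable variant), which is anti-linear on fibers and covers $\hat s$. Because $\hat s$ commutes with $f$ and conjugation in the base carries $\zeta$ to $\bar\zeta$ on the fiber, $\hat s_\zeta$ should intertwine $f_\zeta$ with $f_{\bar\zeta}$; that is, $\hat s_\zeta\circ f_\zeta=f_{\bar\zeta}\circ\hat s_\zeta$. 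Descending to the quotients, $\hat s_\zeta$ then induces an isomorphism between the holomorphic bundle $s^*(\bar L_\zeta)$ and $L_{\bar\zeta}$ over $H$. Concretely, one checks that the composite ``conjugate the fiber, then apply $\hat s$ in the base'' matches exactly the two operations (conjugation of the bundle, pullback by $s$) defining $\bar s^*$.

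The main obstacle I expect is bookkeeping the two conjugations correctly: the conjugation $E\mapsto\bar E$ reverses the complex structure on the fibers and on the base simultaneously, while the pullback $s^*$ by the holomorphic map $s\colon H\to\bar H$ undoes the base conjugation, leaving a holomorphic bundle on $H$ whose cocycle involves $\bar\zeta$. I would be careful to verify that $\hat s_\zeta$ is genuinely holomorphic as a map $s^*(\bar L_\zeta)\to L_{\bar\zeta}$ (rather than merely anti-holomorphic), since the two ambient anti-holomorphicities must cancel; this is where a sign/conjugation error would most easily creep in. Once the intertwining relation $\hat s_\zeta\circ f_\zeta=f_{\bar\zeta}\circ\hat s_\zeta$ is verified and seen to respect the deck-group identifications, the induced isomorphism $s^*(\bar L_\zeta)\simeq L_{\bar\zeta}$ on $H$ follows immediately, giving $\bar s^*([L_\zeta])=[L_{\bar\zeta}]$ as claimed. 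Since $\lambda$ of Remark \ref{lambdaLzeta} is an isomorphism, this simultaneously identifies $\bar s^*$ with complex conjugation on $\C^*$ under $\lambda$.
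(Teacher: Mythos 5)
Your proposal is correct and follows essentially the same route as the paper: both arguments lift $s$ to $\hat s\in\Ah(W)^f$ via Proposition \ref{PrHolAnti}, consider the map $(x,z)\mapsto(\hat s(x),\bar z)$ on $W\times\C$, verify the intertwining relation between $f_\zeta$ and $f_{\bar\zeta}$, and then reinterpret the descended fiberwise anti-linear, $s$-covering map as a holomorphic isomorphism onto (or from) $s^*(\bar L_\zeta)$. The only difference is the direction of the map (the paper descends $L_{\bar\zeta}\to L_\zeta$ while you go the other way), which is immaterial.
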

\begin{proof} 
By Proposition \ref{PrHolAnti}	 we know that $s$ is induced by an anti-holomorphic automorphism $\hat s\in \Ah(W)^f$. The commutative diagram
$$
\begin{tikzcd}[row sep=large, column sep=large ]
W\times\C \ar[r, " (\hat s{,}\bar{\ })"] \ar[d, "f_{\bar\zeta}=(f{,}\bar\zeta\cdot)"']&  W\times \C \ar[d, "f_{\zeta}=(f{,}  \zeta\cdot)"] \\
W\times\C \ar[r, " (\hat s{,}\bar{\ })"]&  W\times \C
\end{tikzcd}
$$
shows that the map $(\hat s{,}\bar{\ })$ descends to a well defined, fiberwise anti-linear, anti-holomorphic, $s$-lifting isomorphism $L_{\bar \zeta}\to L_{\zeta}$. The same map can be regarded as a fiberwise linear, holomorphic,   $s$-lifting isomorphism $L_{\bar \zeta}\to \bar L_{\zeta}$, where this time $s$ has been regarded as a holomorphic map $H_f\to  \bar H_f$. Therefore $L_{\bar \zeta}\simeq s^*(\bar  L_\zeta)=\bar s^*(L_\zeta)$.
\end{proof}

Proposition \ref{sbar*} shows in particular that, for any Real primary Hopf surface $(H,s)$, the associated Real structure  $\bar s^*$ on $\Pic(H)$  is always given, via the isomorphism $\lambda$, by the standard conjugation $\C^*\to\C^*$. \\

If follows that if a line bundle $L_\zeta$ on a Real Hopf surface $(H,s)$ admits an anti-holomorphic Real structure $\phi$, then $\zeta\in\R^*$. Let $\zeta\in\R^*$.  The proof of Proposition \ref{sbar*} shows that the map $(\hat s, \bar{\ })$ descends to a fiberwise anti-linear, anti-holomorphic, $s$-lifting isomorphism $\phi_0:L_\zeta\to L_\zeta$. Any fiberwise anti-linear, anti-holomorphic, $s$-lifting isomorphism $\phi:L_\zeta\to L_\zeta$ has the form $\phi=\nu \phi_0$ for a constant $\nu \in\C^*$. Indeed, the composition $\phi\circ \phi_0^{-1}$ is a holomorphic $\id_H$-covering automorphism of $L$, so   $\phi\circ \phi_0^{-1}=\nu \id_{L_ \zeta}$ with $\nu \in\C^*$.

We have 
\begin{equation}\label{phi2onL}
\phi^2=(\nu \phi_0)\circ (\nu \phi_0)=|\nu |^2\phi_0^2.	
\end{equation}
On the other hand $\phi_0^2$ is induced by the map $W\times\C\to W\times\C$ given by 
\begin{equation}\label{phi02onL}
W\times\C\ni (x,z)\mapsto (\hat s^2(x),z). 	
\end{equation}

\begin{itemize}
\item If $s$ is even, we can choose $\hat s$ such that $\hat s^2=\id_W$; in this case $\phi_0$ is already an anti-holomorphic Real structure on $L_ \zeta$ and formula (\ref{phi2onL}) shows that the set of all  anti-holomorphic Real structures on $L_ \zeta$ is $S^1\phi_0$.
\item 	If $s$ is odd, we can choose $\hat s$ such that $\hat s^2=f$; in this case formula (\ref{phi02onL}) shows that 
$$\phi^2_0([x,z])=[f(x),z]=[x,\zeta^{-1}z],$$
so $\phi^2_0=\zeta^{-1}\id_{L_\zeta}$.  Taking into account (\ref{phi2onL}) it follows that $\phi=\nu \phi_0$ is involutive if and only if $\zeta=|\nu|^2$.  Therefore, in this case, $L_\zeta$ admits anti-holomorphic Real structures if and only if $\zeta>0$, and if this is the case, the set of  anti-holomorphic Real structures on $L_\zeta$  is  $S^1\sqrt{\zeta}\phi_0$.
\end{itemize}

We have proved:
\begin{pr}\label{SetAHRealStrL}
Let $(H_f,s)$ be a Real primary Hopf surface, let $\hat s\in \Ah(W)^f$ be a lift of $s$ with $\hat s^2\in \{\id_W,f\}$ and let $\zeta\in \R^*$. 
\begin{enumerate}
\item If $\hat s^2=\id_W$ (i.e. if $s$ is even) then the set of anti-holomorphic Real structures on $L_\zeta$ is $S^1\phi_0$.
\item If $\hat s^2=f$ (i.e. if $s$ is odd) then $L_ \zeta$ admits  anti-holomorphic Real structures if and only if $\zeta>0$, and, if this the case,  the set of anti-holomorphic Real structures on $L_\zeta$ is $S^1\sqrt{\zeta}\phi_0$.
\end{enumerate}
\end{pr}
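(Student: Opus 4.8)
The plan is to build on the structural facts already assembled just before the statement and to reduce the whole classification to an elementary computation of $\phi_0^2$ in each of the two cases. First I would record the reduction justifying the hypothesis $\zeta\in\R^*$: if $L_\zeta$ carries an anti-holomorphic Real structure $\phi$ covering $s$, then $\phi$ realizes a holomorphic isomorphism $L_\zeta\simeq s^*(\bar L_\zeta)=\bar s^*(L_\zeta)$, so $[L_\zeta]=\bar s^*[L_\zeta]$. By Proposition \ref{sbar*} we have $\bar s^*[L_\zeta]=[L_{\bar\zeta}]$, and since $\lambda:\C^*\to\Pic(H_f)$ is an isomorphism (Remark \ref{lambdaLzeta}), this forces $\zeta=\bar\zeta$, i.e. $\zeta\in\R^*$.

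Next I would fix the reference map $\phi_0:L_\zeta\to L_\zeta$ obtained, exactly as in the proof of Proposition \ref{sbar*}, by descending $(\hat s,\bar{\ }):W\times\C\to W\times\C$; this descent is legitimate precisely because $\zeta$ is real, so the source and target cocycles $f_{\bar\zeta}$ and $f_\zeta$ coincide. This $\phi_0$ is a fiberwise anti-linear, anti-holomorphic, $s$-lifting isomorphism. I would then parametrize all such isomorphisms: any anti-holomorphic $s$-lifting $\phi$ satisfies that $\phi\circ\phi_0^{-1}$ is a \emph{holomorphic} automorphism of $L_\zeta$ covering $\id_H$ (a composition of two anti-holomorphic maps), hence multiplication by a global nowhere-vanishing holomorphic function on the compact connected surface $H_f$, i.e. a nonzero constant. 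Thus $\phi=\nu\phi_0$ with $\nu\in\C^*$.

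The heart of the argument is the computation of the square. Since $\phi_0$ is anti-linear, $\phi^2=(\nu\phi_0)^2=|\nu|^2\phi_0^2$, so everything reduces to identifying $\phi_0^2$. Applying $(\hat s,\bar{\ })$ twice on $W\times\C$ yields $(\hat s^2,\id_\C)$, so $\phi_0^2$ is induced by $(x,z)\mapsto(\hat s^2(x),z)$, and the two cases split. In the even case $\hat s^2=\id_W$, whence $\phi_0^2=\id_{L_\zeta}$ and $\phi^2=|\nu|^2\id$; thus $\phi$ is involutive exactly when $|\nu|=1$, giving the set $S^1\phi_0$. In the odd case $\hat s^2=f$, and the key point is that on $L_\zeta=(W\times\C)/\langle f_\zeta\rangle$ one has $[f(x),z]=[x,\zeta^{-1}z]$ (apply $f_\zeta^{-1}$), so $\phi_0^2=\zeta^{-1}\id_{L_\zeta}$ and $\phi^2=|\nu|^2\zeta^{-1}\id$. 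Hence $\phi$ is an involution iff $|\nu|^2=\zeta$, which is solvable iff $\zeta>0$, in which case $\nu$ ranges over $S^1\sqrt{\zeta}$ and the set of Real structures is $S^1\sqrt{\zeta}\,\phi_0$.

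The step I expect to require the most care is the odd-case identification $\phi_0^2=\zeta^{-1}\id_{L_\zeta}$: one must track the descent relation in the quotient $(W\times\C)/\langle f_\zeta\rangle$ correctly, since it is exactly the interaction between $\hat s^2=f$ and the fiber scaling by $\zeta$ that produces the positivity obstruction $\zeta>0$. Everything else is formal — the reduction to $\zeta\in\R^*$, the construction of $\phi_0$, and the parametrization $\phi=\nu\phi_0$ all follow immediately from the cited results.
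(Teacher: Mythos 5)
Your proposal is correct and follows essentially the same route as the paper: construct $\phi_0$ by descending $(\hat s,\bar{\ })$, parametrize all anti-holomorphic $s$-lifting isomorphisms as $\nu\phi_0$ with $\nu\in\C^*$, and reduce to the computation $\phi_0^2=\id_{L_\zeta}$ (even case) versus $\phi_0^2=\zeta^{-1}\id_{L_\zeta}$ (odd case) via the descent relation $[f(x),z]=[x,\zeta^{-1}z]$. The only (welcome) addition is your explicit justification of $\zeta\in\R^*$ through Proposition \ref{sbar*} and the injectivity of $\lambda$, which the paper leaves implicit.
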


In all cases the group $\C^*\id_{L_\zeta}$ of holomorphic automorphisms acts on the set of  anti-holomorphic Real structures on $L_\zeta$ by conjugation, and the explicit  formula for this action is
$$
(z\id) \circ \phi \circ (z\id)^{-1}=z\bar z^{-1}\phi=({z}{|z|^{-1}})^2\phi.
$$
Since the map $\C^*\to S^1$ given by $z\mapsto   ({z}{|z|^{-1}})^2$ is obviously surjective, it follows that any $\psi\in S^1\phi$ is isomorphic (equivalent) to $\phi$.  This shows that, under the assumptions of Proposition \ref{SetAHRealStrL}, all anti-holomorphic Real structures  on $L_\zeta$ are isomorphic to either $\phi_0$ (if $\hat s^2=\id_W$), or to $\sqrt{\zeta}\phi_0$ (if $\hat s^2=f$ and $ \zeta>0$). \vspace{1mm}

In conclusion, we obtain:
\begin{pr}\label{Pic(X)R}
Let $(H,s)$ be a Real primary Hopf surface.  
\begin{enumerate}
\item Via the  isomorphism $ \C^*\textmap{\lambda}\Pic(H)$, the Real structure $ \Pic(H)\textmap{\bar s^*}\Pic(H)$ induced by $s$ coincides with the standard conjugation.

\item The map $[L_\zeta,\phi]\mapsto \zeta$ defines 
\begin{enumerate}
\item An isomorphism $\Pic_\R(H)\to \R^*$ if $s$ is an even Real structure.
\item An isomorphism $\Pic_\R(H)\to \R_{>0}$ if $s$ is an odd Real structure.
\end{enumerate}
\end{enumerate}
\end{pr}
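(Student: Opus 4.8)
The plan is to read off both assertions from the explicit description of $\Pic(H)$ provided by Remark \ref{lambdaLzeta}, combined with Propositions \ref{sbar*} and \ref{SetAHRealStrL}. Statement (1) is immediate: using the Lie group isomorphism $\lambda:\C^*\to\Pic(H)$, $\lambda(\zeta)=[L_\zeta]$, together with Proposition \ref{sbar*}, which gives $\bar s^*([L_\zeta])=[L_{\bar\zeta}]$, one gets $\bar s^*\circ\lambda(\zeta)=\lambda(\bar\zeta)$ for every $\zeta\in\C^*$. Hence $\lambda^{-1}\circ\bar s^*\circ\lambda$ is the standard conjugation $\zeta\mapsto\bar\zeta$ on $\C^*$, with no further computation required.

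For (2) I would first check that the assignment is well defined and multiplicative. If $(L,\phi)$ is a Real holomorphic line bundle on $(H,s)$, then $\phi$ exhibits an isomorphism $L\simeq s^*(\bar L)$, so $[L]$ is fixed by $\bar s^*$; by (1) this forces $L\simeq L_\zeta$ with $\zeta\in\R^*$, and $\zeta$ depends only on $[L]$, hence only on the class $[L,\phi]$. Thus $[L,\phi]\mapsto\zeta$ is a well-defined map into $\R^*$, and it factors as the forgetful homomorphism $\Pic_\R(H)\to\Pic(H)$, $[L,\phi]\mapsto[L]$, followed by $\lambda^{-1}$. Since tensor product of Real line bundles induces tensor product on the underlying bundles and $\lambda$ is a group isomorphism, the map is a group homomorphism.

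It remains to compute its kernel and its image, both governed by Proposition \ref{SetAHRealStrL} and the conjugation formula established just above. For injectivity, suppose $[L_\zeta,\phi]$ and $[L_\zeta,\phi']$ share the same image $\zeta$. An isomorphism of Real line bundles covering $\id_H$ is a holomorphic automorphism $\Psi$ of $L_\zeta$ with $\Psi\circ\phi=\phi'\circ\Psi$, and since $H$ is compact and connected one has $\Psi=z\,\id_{L_\zeta}$ for some $z\in\C^*$. The formula $(z\,\id)\circ\phi\circ(z\,\id)^{-1}=(z|z|^{-1})^2\phi$ together with the surjectivity of $z\mapsto(z|z|^{-1})^2$ onto $S^1$ shows that every Real structure in $S^1\phi_0$ (respectively $S^1\sqrt{\zeta}\phi_0$) is equivalent to $\phi_0$ (respectively $\sqrt{\zeta}\phi_0$); hence any two Real structures on $L_\zeta$ define the same class, giving injectivity. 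For the image, Proposition \ref{SetAHRealStrL} states that $L_\zeta$ admits an anti-holomorphic Real structure for every $\zeta\in\R^*$ when $s$ is even, and only for $\zeta>0$ when $s$ is odd, yielding surjectivity onto $\R^*$ in the even case and onto $\R_{>0}$ in the odd case.

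The only genuinely delicate point is the injectivity step: one must know that all anti-holomorphic Real structures sharing the same underlying bundle $L_\zeta$ are mutually equivalent. This is precisely the payoff of computing the conjugation action of $\C^*\,\id_{L_\zeta}$ on the $S^1$-torsor of Real structures described in Proposition \ref{SetAHRealStrL} and observing the surjectivity of $z\mapsto(z|z|^{-1})^2$; everything else is formal bookkeeping with the isomorphism $\lambda$ and Proposition \ref{sbar*}.
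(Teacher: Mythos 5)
Your proposal is correct and follows essentially the same route as the paper: part (1) is read off from Proposition \ref{sbar*} via $\lambda$, and part (2) combines Proposition \ref{SetAHRealStrL} (for the image) with the computation of the conjugation action $(z\,\id)\circ\phi\circ(z\,\id)^{-1}=(z|z|^{-1})^2\phi$ and the surjectivity of $z\mapsto (z|z|^{-1})^2$ onto $S^1$ (for injectivity), which is exactly the argument the paper develops in the discussion preceding the proposition. Your explicit verification that the map is a well-defined group homomorphism is a minor organizational addition, not a different method.
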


The second statement shows that, for odd Real primary Hopf surfaces, the obvious group morphism $\Pic_\R(H)\to \Pic(H)(\R)$ is not surjective; the classes $[L_\zeta]$  with $\zeta\in\R_{<0}$ do not correspond to Real holomorphic line bundles in the sense of Definition \ref{RealBdlsHol}, although they are fixed points of $\bar s^*$.

\section{The differential-topological classification}

\subsection{The differential-topological classification of even Real Hopf surfaces }

On the product $S^1\times S^3$ we consider the following involutions 

\begin{equation}\label{model-involutions}
\begin{split}
\tau(\zeta,(u,v))&\edf (\zeta,(\bar u,\bar v))\\
\tau'(\zeta,(u,v))&\edf(\zeta,(\bar u,\zeta\bar v)).
\end{split}
\end{equation}

Our  goal is the following classification result:
\begin{thry}\label{ClassDiffEven}
Any even Real primary Hopf surface  $(H,\sigma)$ is diffeomorphic, as a  $\Z_2$-manifold, to either 	$(S^1\times S^3,\tau)$ or $(S^1\times S^3,\tau')$	.
\end{thry}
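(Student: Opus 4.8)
The plan is to first normalize the Real structure and then realize $(H_f,\sigma)$ explicitly as a mapping torus carrying a conjugation-type involution. By Theorem~\ref{ClassEven} every even Real structure on $H_f$ is equivalent to the standard one $s_f$, so it suffices to treat the pairs $(H_f,s_f)$ with $f$ of real coefficients, together with the pairs with $f\in II'_c$.

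First I would treat the case in which $f$ has \emph{positive} diagonal coefficients. Using the roots $f^{1/k}$ of Remark~\ref{kroot} and interpolating, I would build a smooth one-parameter group $(f^t)_{t\in\R}\subset\Aut_h(W)$ with real coefficients, with $f^1=f$, acting freely and properly on $W$ and commuting with the conjugation $c$. Next I would exhibit a compact hypersurface $\Sigma\subset W$ (a suitable level set of a proper function strictly decreasing along the orbits) meeting each orbit once and transversally, together with a diffeomorphism $\Sigma\to S^3$ intertwining $c$ with $\kappa(u,v)=(\bar u,\bar v)$ and the sign maps $(z,w)\mapsto(\pm z,\pm w)$ with $(u,v)\mapsto(\pm u,\pm v)$. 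Then $(\sigma,t)\mapsto f^t(\sigma)$ is an equivariant diffeomorphism $\Sigma\times\R\to W$ turning $f$ into $(\sigma,t)\mapsto(\sigma,t+1)$ and $c$ into $(\sigma,t)\mapsto(\kappa\sigma,t)$, so that passing to the quotient gives $(H_f,s_f)\cong(S^3\times S^1,\tau)$. For $f\in II'_c$ the same scheme applies with $f^t(z,w)=(\alpha^t z,\bar\alpha^t w)$ and $c'$ (see Example~\ref{ExStandard}) instead of $c$: the round sphere is transversal, $c'$ commutes with $f^t$, and $c'|_{S^3}(u,v)=(\bar v,\bar u)$ is $\U(2)$-conjugate to $\kappa$, again giving $\tau$.

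When $f$ has real but not all positive diagonal coefficients, I would write $f=g\circ\iota$ with $g$ of positive diagonal coefficients and $\iota\in\{(z,w)\mapsto(\pm z,\pm w)\}$ absorbing the signs; one checks that $\iota$ is involutive and commutes with both $g$ (hence $g^t$) and $c$. Applying the construction above to $g$ identifies $W\cong\Sigma\times\R\cong S^3\times\R$ so that $\iota$ becomes $(\sigma,t)\mapsto(\iota\sigma,t)$ and $f$ becomes $(\sigma,t)\mapsto(\iota\sigma,t+1)$. Hence $H_f$ is the mapping torus of the linear isometry $\iota|_{S^3}$, equipped with the involution $[\sigma,t]\mapsto[\kappa\sigma,t]$.

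The remaining, and main, step is to untwist this mapping torus equivariantly, and the outcome is dictated by the action of $\iota$ on the real circle $\mathrm{Fix}(\kappa)\cap S^3$. If $\iota$ preserves its orientation (the cases $\iota=\pm\id$), I would join $\iota$ to $\id$ by a path $\rho_s\in SO(4)$ of isometries \emph{commuting with} $\kappa$ (a simultaneous real rotation of the real and the imaginary coordinate planes) and use the trivialization $[\sigma,t]\mapsto(\rho_t\sigma,e^{2\pi i t})$, which carries the involution to $\tau$. If $\iota$ reverses it (the cases $\iota=(z,-w),(-z,w)$), no $\kappa$-compatible such path exists; instead an explicit phase twist, e.g. $[(u,v),t]\mapsto(e^{2\pi i t},(u,e^{i\pi t}v))$, trivializes the mapping torus and carries the involution to $\tau'$, as one verifies by a direct computation of the intertwining relation. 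Since the four sign choices for $\iota$ split into two orientation-preserving and two orientation-reversing ones, every even Real primary Hopf surface falls into the type $\tau$ or $\tau'$. I expect the genuinely delicate point to be exactly this last verification: checking that each proposed trivialization descends to the mapping torus and conjugates $[\sigma,t]\mapsto[\kappa\sigma,t]$ to precisely $\tau$ or $\tau'$, this being where the $\zeta$-dependence built into $\tau'$ is produced.
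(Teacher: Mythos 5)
Your proposal is correct and follows essentially the same route as the paper: normalization via Theorem~\ref{ClassEven}, the one-parameter group $(f^t)$ together with a compact transversal $\Sigma\cong S^3$ compatible with conjugation and the sign maps, and an explicit phase-twisted trivialization that produces $\tau'$ exactly when one diagonal coefficient is negative. The only organizational differences are that the paper reaches the twisted cases by realizing $H_f$ as the free quotient of $H_{f^2}\cong S^1\times S^3$ by $j'$ or $j''$ and then applying the maps $a'(\zeta,(u,v))=(\zeta^2,(u,\zeta v))$ and $a''(\zeta,(u,v))=(\zeta^2,R_\zeta(u,v))$ of Lemma~\ref{Q'Q''} --- which are precisely your untwisting maps after the substitution $\zeta=e^{i\pi t}$ --- and that it treats $II'_c$ by conjugating $(f,c')$ to $(f_{|\alpha|},c)$ via Lemma~\ref{II'c-lg} rather than by a complex-power flow; note also that, exactly as in the paper (whence the enlarged classes $\widetilde{II}_a$, $\widetilde{II}_b$), your decomposition $f=g\circ\iota$ forces a non-standard off-diagonal coefficient for $g$ when $f\in II_a\cup II_b$ has negative diagonal entries, so your flow must likewise be defined on these slightly enlarged classes.
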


We will also need the following involutions on $S^1\times S^3$:
\begin{equation}
j'(\zeta, (u,v))\edf(-\zeta,(u,-v)), \ 	j''(\zeta, (u,v))\edf(-\zeta,(-u,-v)).
\end{equation}

The order 2 groups $\langle j'\rangle$,  $\langle j''\rangle$ act freely and properly discontinuously on $S^1\times S^3$, so the we obtain double covers
$$
q':S^1\times S^3\to Q'\edf \qmod{S^1\times S^3}{\langle j'\rangle}, 	\ q'':S^1\times S^3\to Q''\edf \qmod{S^1\times S^3}{\langle j''\rangle}.
$$
Note that $\tau$ commutes with $j'$ and $j''$, so we obtain induced involutions
$$
\theta': Q'\to Q',\ \theta'': Q''\to Q''
$$
induced by $\tau$.
We will need the following notation:
\begin{dt}\label{Rzeta}
For $\zeta\in S^1$ we denote by 	$R_\zeta\in \SO(2)\subset\GL(2,\C)$   the $2\times 2$ matrix which corresponds to $\zeta$ via the standard isomorphism $S^1\to\SO(2)$.
\end{dt}

\begin{lm}\label{Q'Q''} Consider the maps $a': S^1\times  S^3\to S^1\times  S^3$, $a'': S^1\times  S^3\to S^1\times  S^3$ defined by
$$
a'(\zeta,(u,v))\edf (\zeta^2, (u,\zeta v),\   a''(\zeta,(u,v))\edf (\zeta^2,R_\zeta(u,v)),
$$
\begin{enumerate}
\item $a'$ 	 is $\langle j'\rangle$-invariant and induces a diffeomorphism of $\Z_2$-spaces 
$$\hat a': (Q',\theta')\textmap{\simeq} (S^1\times S^3,\tau').$$
\item  $a''$ 	 is $\langle j''\rangle$-invariant and induces a diffeomorphism of $\Z_2$-spaces 
$$\hat a'': (Q'',\theta'')\textmap{\simeq}  (S^1\times S^3,\tau).$$
 \end{enumerate}
\end{lm}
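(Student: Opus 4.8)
The plan is to do everything by explicit computation upstairs on $S^1\times S^3$ and then descend to the quotients; since the two statements have identical structure, I would treat them in parallel. The first step is to verify the invariance that makes $a'$, $a''$ descend. For (1),
$$a'(j'(\zeta,(u,v)))=a'(-\zeta,(u,-v))=\big((-\zeta)^2,(u,(-\zeta)(-v))\big)=(\zeta^2,(u,\zeta v))=a'(\zeta,(u,v)),$$
so $a'$ is $\langle j'\rangle$-invariant and factors through a smooth map $\hat a'\colon Q'\to S^1\times S^3$. For (2) the analogous check uses the elementary identity $R_{-\zeta}=-R_\zeta$ (rotation by $\theta+\pi$ is $-\id$ composed with rotation by $\theta$), which gives $R_{-\zeta}(-u,-v)=R_\zeta(u,v)$ and hence $a''\circ j''=a''$, producing $\hat a''\colon Q''\to S^1\times S^3$.

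Next I would prove that $\hat a'$ and $\hat a''$ are diffeomorphisms, most cleanly by writing down inverses. Because $\zeta\mapsto\zeta^2$ is a local diffeomorphism of $S^1$ whose two preimages of a point differ only by sign, I would pick a local branch $\zeta$ of the square root of the first coordinate and set $(\eta,(u,w))\mapsto[\zeta,(u,\bar\zeta w)]_{\langle j'\rangle}$ for (1), and $(\eta,(p,q))\mapsto[\zeta,R_{\bar\zeta}(p,q)]_{\langle j''\rangle}$ for (2). In each case the alternative choice $-\zeta$ lands in the same $\langle j'\rangle$- (resp. $\langle j''\rangle$-)orbit, so the assignment is well defined and smooth and visibly inverts $\hat a'$ (resp. $\hat a''$). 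This step also uses that $R_\zeta$ preserves $S^3$, which holds because $R_\zeta$ is orthogonal with real entries, so $|R_\zeta(u,v)|^2=|u|^2+|v|^2$ on $\C^2$.

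Finally I would establish the $\Z_2$-equivariance. Since $\theta'$ and $\theta''$ are by definition induced by $\tau$, it suffices to verify the intertwining relations $a'\circ\tau=\tau'\circ a'$ and $a''\circ\tau=\tau\circ a''$ on $S^1\times S^3$, which then pass to the quotients. For (1), using $|\zeta|=1$ so that $\zeta^2\bar\zeta=\zeta$, one finds $\tau'(a'(\zeta,(u,v)))=(\zeta^2,(\bar u,\zeta\bar v))=a'(\zeta,(\bar u,\bar v))=a'(\tau(\zeta,(u,v)))$. For (2) the relevant point is that $R_\zeta$ has real entries, so conjugation commutes with it, $\overline{R_\zeta(u,v)}=R_\zeta(\bar u,\bar v)$, giving $\tau(a''(\zeta,(u,v)))=(\zeta^2,R_\zeta(\bar u,\bar v))=a''(\tau(\zeta,(u,v)))$.

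No step is a genuine obstacle: the lemma is essentially bookkeeping. The only two places I expect to require a moment's care are the identity $R_{-\zeta}=-R_\zeta$ entering the $\langle j''\rangle$-invariance of $a''$, and the compatibility of complex conjugation with the real matrix $R_\zeta$ entering the equivariance of $a''$; everything else is direct substitution.
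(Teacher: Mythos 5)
Your proof is correct and follows essentially the same route as the paper's: verify the $\langle j'\rangle$- and $\langle j''\rangle$-invariance, the intertwining relations with $\tau$ and $\tau'$, and that the induced maps on the quotients are diffeomorphisms. The only cosmetic difference is that you exhibit explicit local inverses via a branch of the square root, whereas the paper concludes from bijectivity plus the fact that $a'$, $a''$ are local diffeomorphisms; your filled-in computations (in particular $R_{-\zeta}=-R_\zeta$ and the reality of $R_\zeta$) are exactly the ones the paper leaves as ``easy to see.''
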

\begin{proof}
It's easy to see that 	 $a'$ ($ a''$)	 is $\langle j'\rangle$-invariant (respectively $\langle j''\rangle$-invariant) and that the induced maps $\hat a':Q'\to S^1\times S^3$, $\hat a'':Q''\to S^1\times S^3$ are bijective  and verify $\tau'\circ \hat a'=\hat a'\circ \theta'$, $\tau\circ \hat a''=\hat a''\circ \theta''$.   On the other hand $\hat a'$, $\hat a''$ are local diffeomorphisms because $a'$, $a''$ have this property. Therefore $\hat a'$, $\hat a''$  are diffeomorphisms. 
\end{proof} 

The idea of the proof of Theorem \ref{ClassDiffEven} is the following: using our classification Theorem \ref{ClassEven} we may suppose that $(H,\sigma)=(H_f,s_f)$, where $f\in IV\cup III\cup  {II}_a\cup  {II}_b\cup II_c$ is either with real coefficients, or $f\in II'_c$ and $s_f$ is the canonical Real structure on $H_f$. We will show (see Propositions \ref{SignsCoeff}, \ref {ClassDiffII'c} proved below) that $(H_f,s_f)$ is equivariantly diffeomorphic  to $(S^1\times  S^3,\tau)$,  to $(Q',\theta')$ or to $(Q'',\theta'')$. Theorem \ref{ClassDiffEven} will then follow by Lemma \ref{Q'Q''}.

\begin{pr}\label{SignsCoeff}
Let $f\in IV\cup III\cup  {II}_a\cup  {II}_b\cup II_c$ be with real coefficients. 
\begin{enumerate}
\item If the diagonal coefficients of $f$ are positive, then $(H_f,s_f)$ is equivariantly diffeomorphic to $(S^1\times  S^3,\tau)$. 
\item If a single diagonal coefficients of $f$ is negative, then $(H_f,s_f)$ is equivariantly diffeomorphic to $(Q',\theta')$. 
\item If both diagonal coefficients of $f$ are negative, then $(H_f,s_f)$ is equivariantly diffeomorphic to $(Q'',\theta'')$. 	
\end{enumerate}	
\end{pr}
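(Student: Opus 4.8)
The plan is to realize each $(H_f,s_f)$ as a quotient of $S^1\times S^3$ coming from a contraction flow, and then to read off the involutions $\tau$, $j'$, $j''$ directly from the signs of the diagonal coefficients. Throughout I use the three sign involutions of $W$, namely $\varepsilon_1(z,w)=(-z,w)$, $\varepsilon_2(z,w)=(z,-w)$ and $\varepsilon_{12}(z,w)=(-z,-w)$, all of which commute with $c$.

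First I would treat the positive case (1). For $f$ with positive diagonal coefficients I construct a smooth $1$-parameter group $(f^t)_{t\in\R}$ of automorphisms of $W$ with real coefficients and $f^1=f$, obtained by replacing the exponent $\tfrac1k$ in the formulas of Remark \ref{kroot} by a real parameter $t$ (for instance $f^t(z,w)=(\delta^{rt}z+t\delta^{r(t-1)}w^r,\delta^t w)$ in class $II_a$); a direct check gives $f^s\circ f^t=f^{s+t}$. Since $|\delta^t|\neq 1$ for $t\neq 0$ the action is free and proper and every orbit runs from $\infty$ to $0$. Next I produce a compact transversal $\Sigma$: for the diagonal classes $IV$, $III$, $II_c$ the Euclidean sphere $\{|z|^2+|w|^2=1\}$ is already transversal, because the radial derivative of $|z|^2+|w|^2$ along the flow is strictly negative; for the shear classes $II_a$, $II_b$ this sphere fails, and I replace it by a flow-adapted ellipsoid $\Sigma_\lambda=\{\lambda|z|^2+|w|^2=1\}$, showing that for $\lambda>0$ small enough $\lambda|z|^2+|w|^2$ is a strict Lyapunov function for the flow. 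In every case $\Sigma$ is invariant under $c$ and under $\varepsilon_1,\varepsilon_2,\varepsilon_{12}$, and $(z,w)\mapsto(\sqrt{\lambda}\,z,w)$ identifies it with $S^3$ compatibly with all these maps. The flow then gives a diffeomorphism $W\cong\Sigma\times\R\cong S^3\times\R$ carrying $c$ to (conjugation on $S^3$)$\times\id_\R$ and $f^t$ to translation by $t$; dividing by $\langle f\rangle=\langle f^1\rangle$ yields $(H_f,s_f)\cong(S^3\times S^1,\tau)$, which is (1).

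For the negative cases (2), (3) I pass through $f^2$, which has positive diagonal coefficients, so the construction of (1) applies to it directly: its flow $g^t$ identifies $(H_{f^2},s_{f^2})$ with $(S^3\times S^1,\tau)$, the period being $1$. Let $g^{1/2}=(f^2)^{1/2}$ be the time-$\tfrac12$ map and set $\eta\coloneq f\circ(g^{1/2})^{-1}$. A direct computation with the explicit formulas shows that $\eta$ is exactly one of the sign involutions — $\eta\in\{\varepsilon_1,\varepsilon_2\}$ when a single diagonal coefficient is negative and $\eta=\varepsilon_{12}$ when both are — and that $\eta$ commutes with the whole flow $g^t$. Since $[\langle f\rangle:\langle f^2\rangle]=2$, the covering $H_{f^2}\to H_f$ is a double cover whose deck involution $\bar f$ is induced by $f=\eta\circ g^{1/2}$; in the coordinates $[\xi,t]\mapsto g^t(\xi)$ on $H_{f^2}\cong S^3\times(\R/\Z)$ one computes $\bar f(\xi,t)=(\eta\xi,t+\tfrac12)$, which under $t\leftrightarrow\zeta=e^{2\pi i t}$ becomes $(\eta\xi,-\zeta)$, that is $j'$ when $\eta\in\{\varepsilon_1,\varepsilon_2\}$ and $j''$ when $\eta=\varepsilon_{12}$. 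As $c$ descends to $\tau$ and commutes with $\eta$, it commutes with $j',j''$ and induces $\theta'$, $\theta''$; hence $(H_f,s_f)\cong(Q',\theta')$ in case (2) and $(H_f,s_f)\cong(Q'',\theta'')$ in case (3).

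The main obstacle is the geometric input of the second paragraph: producing the transversal $\Sigma$ and checking transversality together with the compatibility of the identification $\Sigma\cong S^3$ with $c$ and with the three sign involutions. For the diagonal classes this is immediate, but for the shear classes $II_a$ and $II_b$ the naive unit sphere is not transversal, and one must verify that $\lambda|z|^2+|w|^2$ decreases strictly along the flow for $\lambda$ small — a short negative-definiteness estimate for the quadratic form arising in $II_b$, and a slightly more careful estimate handling the $w^r$-term in $II_a$. The remaining work, namely identifying $\eta$ with the correct sign involution in each subcase and matching it to $j'$ or $j''$, is a bookkeeping exercise governed by the parity of $r$ and the signs of $\delta^r$ and $\delta$.
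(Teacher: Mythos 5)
Your overall architecture is exactly that of the paper: a one--parameter flow $(f^t)$ with $f^1=f$, a compact transversal $\Sigma\cong S^3$ compatible with $c$ and the sign involutions, and for the negative cases a passage to $g=f^2$ with the deck involution written as a sign involution composed with the time--$\tfrac12$ map, matching $j'$ or $j''$. However, there is one genuine gap, in the construction of the transversal for class $II_a$. You propose the quadratic ellipsoid $\Sigma_\lambda=\{\lambda|z|^2+|w|^2=1\}$ and claim $\lambda|z|^2+|w|^2$ is a strict Lyapunov function for $\lambda$ small. It is not, for any $\lambda>0$, when $r\ge 2$. Indeed the generator of the flow is $(r\ln(\delta)z+c\delta^{-r}w^r,\ \ln(\delta)w)$, so
\begin{equation*}
\frac{d}{dt}\Big|_{t=0}\big(\lambda|z|^2+|w|^2\big)\big(f^t(Z)\big)
=2\lambda r\ln(\delta)|z|^2+2\lambda c\,\delta^{-r}\Re(\bar z w^r)+2\ln(\delta)|w|^2 .
\end{equation*}
Taking $z=tw^r$ with $t>0$ small (so that the cross term, of size $\lambda t|w|^{2r}$, dominates the $|z|^2$--term, of size $\lambda t^2|w|^{2r}$) and then $|w|$ large (so that $|w|^{2r}$ dominates $|w|^2$), the right-hand side becomes positive; shrinking $\lambda$ does not help, since both the offending cross term and the good $|z|^2$--term scale with $\lambda$ while the only term of degree $2$ in $w$ does not. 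Without global monotonicity an orbit may meet $\Sigma_\lambda$ several times and $F:\R\times\Sigma\to W$ need not be a bijection. The repair is the one the paper uses: replace the quadratic form by the weighted function $|z|^2+B|w|^{2r}$ with $B$ large (matching the weighted homogeneity of the shear $w^r$), for which Young's inequality does give a uniform negative bound; its level set is still carried to $S^3$ by radial projection, which commutes with $c$ and with $(z,w)\mapsto(\pm z,\pm w)$ (a linear rescaling no longer suffices, since the level set is not an ellipsoid).

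Two smaller points. First, when you pass to $g=f^2$ for $f\in II_a$, the square has shear coefficient $2\delta^r$ rather than $1$, so it leaves the class $II_a$; your flow formula must be stated for arbitrary shear coefficient $c$ (as in the enlarged classes $\widetilde{II}_a$, $\widetilde{II}_b$ of the paper) before it can be applied to $g$. Second, when the negative diagonal coefficient is the first one, your $\eta$ is $\varepsilon_1$ and the induced involution is $(\zeta,(u,v))\mapsto(-\zeta,(-u,v))$, which is not literally $j'$; you should either conjugate by the coordinate swap $(u,v)\mapsto(v,u)$ (which commutes with $\tau$) or, as the paper does, normalize beforehand so that the negative coefficient is the second one, using the $II_c$ symmetry $(\alpha,\delta)\leftrightarrow(\delta,\alpha)$.
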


The proof of Proposition \ref{SignsCoeff} requires a preparation. 
Let $f\in II_a$, 
$$f\begin{pmatrix}z\\   w\end{pmatrix}=\begin{pmatrix}\delta^r z+w^r\\ \delta w\end{pmatrix}.$$
Suppose $\delta\in\R$. The second power $f^2$ of $f$ given by
$$f^2\begin{pmatrix}z\\   w\end{pmatrix}=\begin{pmatrix}
 \delta^{2r}z+  2\delta^{r}w^r\\ \delta^2 w\end{pmatrix}$$
 has always positive diagonal coefficients, but, unfortunately $f^2\not\in II_a$. Similar remark for $f\in II_b$. Therefore we will need the slightly larger classes $\widetilde{II}_a\supset II_a$, $\widetilde{II}_b\supset II_b$ defined by
$$
\widetilde{II}_a\edf \left\{ f:W\to W\vert \ f\begin{pmatrix}z\\   w\end{pmatrix}=\begin{pmatrix}\delta^r z+cw^r\\ \delta w\end{pmatrix}, \ 0<|\delta|<1, \ c\in \C \right\}\ \
$$
$$
\widetilde{II}_b\edf \left\{ f:W\to W\vert \ f\begin{pmatrix}z\\   w\end{pmatrix}=\begin{pmatrix}\alpha z+cw\\ \alpha w\end{pmatrix}, \ 0<|\delta|<1, \ c\in \C \right\}.
$$

We begin with the following remark which shows that, any contraction $f\in IV\cup III\cup II_a\cup II_b\cup II_c$ with real coefficients and positive diagonal coefficients  can be identified with the term $f^1$ of an explicit smooth 1-parameter group $(f^t)_{t\in\R}$ of holomorphic automorphisms of $W$. 
More precisely: 
\begin{re}
Let $f\in IV\cup III\cup \widetilde{II}_a\cup \widetilde{II}_b\cup II_c$ with real coefficients and positive diagonal coefficients. For $t\in\R$ we define $f^t\in\Aut_h(W)$ by the formula specified in the third column of the following table:
$$  \begin{array}  {|c|c|c|}
\hline &&\\ [-1em]
\hbox{The class of } f &  f\begin{pmatrix}
z\\w 	
\end{pmatrix}& f^{t}\begin{pmatrix}
z\\w 	
\end{pmatrix}   
 \\ && \\ [-1em]
 \hline &&\\ [-1em]
IV & \begin{pmatrix}\alpha z\\ \alpha w\end{pmatrix} & \begin{pmatrix}\alpha^{t} z\\ \alpha^{t} w\end{pmatrix}
\\ && \\ [-1em]
 \hline &&\\ [-1em]
III & \begin{pmatrix}\delta^r z\\ \delta w\end{pmatrix}  & \begin{pmatrix}
 \delta^{rt}z\\ \delta^{t}w 	
\end{pmatrix}   
\\ && \\ [-1em]
 \hline &&\\ [-1em]
\widetilde{II}_a  & \begin{pmatrix}\delta^r z+cw^r\\ \delta w\end{pmatrix} &  \begin{pmatrix}
 \delta^{rt}z+ct\delta^{r(t-1)}w^r
\\ 
\delta^{t}w	
\end{pmatrix}  
\\ && \\ [-1em]
 \hline &&\\ [-0.7em]
\widetilde{II}_b & \begin{pmatrix}\alpha z+cw\\ \alpha w\end{pmatrix} & \begin{pmatrix}
 \alpha^{t}z+ct\alpha^{t-1}w
\\
\alpha^{t}w	
\end{pmatrix}
\\ && \\ [-1em]
\hline &&\\ [-1em]
II_c &\begin{pmatrix}\alpha z\\ \delta w\end{pmatrix} & \begin{pmatrix}
 \alpha^{t}z
\\
\delta^{t}w	
\end{pmatrix}  
\\ [0.8em] 
\hline
\end{array}
$$

Then
\begin{enumerate}
 \item The family $(f^t)_{t\in\R}$ a 1-parameter  group of automorphisms of $W$, i.e. the map $\R\ni t\mapsto f^t\in\Aut_h(W)$ is a group morphism. 
 \item $f^1=f$.	
\end{enumerate}
 \end{re} 
\begin{proof}
 This follows by elementary computations. 	
\end{proof}

For a map $\eta:W\to \R$ and $Z=(z,w)\in W$ we define $\eta_Z:\R\to \R$ by
\begin{equation}\label{etaZ}
\eta_Z(t)\edf \eta (f^t(Z)).	
\end{equation}

Since $(f^t)_{t\in\R}$ is a 1-parameter group of diffeomorphisms we have the identity
\begin{equation} \label{eta'Z}
\eta'_Z(t)=	\eta'_{f^t(Z)}(0).
\end{equation}

 \begin{re}\label{eta-n}
Let   $B\in (0,\infty)$ and  $q\in \N^*$. Let $\eta_{B}^{q}:W\to (0,\infty)$ be the map defined by
$$
\eta_{B}^{q}(z,w)=|z|^{2}+B|w|^{2q}.
$$
\begin{enumerate}
\item 	$\eta_{B}^{q}$ is a submersion, in particular the fiber $\Sigma_{B}^{q}\edf (\eta_{B}^{q})^{-1}(1)$ is a smooth hypersurface of $W$.
\item The restriction  $n_B^q:\Sigma_B^q\to S^3$ of the normalization map $N:W\to S^3$, $N(Z)\edf \frac{1}{\|Z\|} Z$ to $\Sigma_{B}^{q}$ is a diffeomorphism which commmutes with the involutions $(z,w)\mapsto (z,-w)$, $(z,w)\mapsto (-z,-w)$, $(z,w)\mapsto (\bar z,\bar w)$.
\end{enumerate}	
\end{re}
\begin{proof}
The first claim follows by elementary computation. For the second, note first that for any $Z\in W$ the half-line $\R^*_+ Z$ intersects $\Sigma_{B}^{q}$ in a unique point, which will be denoted $\Ng_{B}^{q}(Z)$. Using the implicit function theorem it follows easily that the obtained map  	$\Ng_{B}^{q}:W\to \Sigma_{B}^{q}$ is smooth, so  the restriction $\ng^q_B\edf \Ng_{B}^{q}|_{S^3}:S^3\to \Sigma_{B}^{q}$ will also be smooth. It suffices to note that
$$
\ng_{B}^{q} \circ n_{B}^{q}=\id_{\Sigma_{B}^{q}},\  n_{B}^{q}\circ \ng_{B}^{q}=\id_{S^3}.
$$
\end{proof}

\begin{lm}\label{ft-F}
Let $f\in IV\cup III\cup \widetilde{II}_a\cup \widetilde{II}_b\cup II_c$ with real coefficients and positive diagonal coefficients.  Let $\eta:W\to (0,+\infty)$ and $C<0$ 
be given by the following table:
$$  \begin{array}  {|c|c|c|c|}
\hline &&\\ [-1em]
\hbox{The class of } f &  f\begin{pmatrix}
z\\w 	
\end{pmatrix}& \eta & C    
 \\ && \\ [-1em]
 \hline &&\\ [-1em]
IV & \begin{pmatrix}\alpha z\\ \alpha w\end{pmatrix} &  \eta^1_1 & 2\ln(\alpha)
\\ && \\ [-1em]
 \hline &&\\ [-1em]
III & \begin{pmatrix}\delta^r z\\ \delta w\end{pmatrix}  & \eta^1_1 & 2\ln(\delta)\\ && \\ [-1em]
 \hline &&\\ [-1em]
\widetilde{II}_a  & \begin{pmatrix}\delta^r z+cw^r\\ \delta w\end{pmatrix} &  \eta^{r}_{B} \hbox{ with} \ B\geq  c^2 \frac{1}{ r^2\delta^{2r}\ln(\delta)^2} &  r\ln(\delta) \\ && \\ [-1em]
 \hline &&\\ [-0.7em]
\widetilde{II}_b & \begin{pmatrix}\alpha z+cw\\ \alpha w\end{pmatrix} &  \eta^{1}_{B} \hbox{ with} \ B\geq c^2\frac{1}{\alpha^2\ln(\alpha)^2} &  \ln(\alpha)\\ && \\ [-1em]
\hline &&\\ [-1em]
II_c &\begin{pmatrix}\alpha z\\ \delta w\end{pmatrix} & \eta^1_1 & 2\max(\ln(\alpha),\ln(\delta))\\ [0.8em] 
\hline
\end{array}
$$
\begin{enumerate}
\item 	In each case and for any $Z\in W$, the map $\eta_Z$ satisfies the differential inequality
\begin{equation}\label{DiffIneq}
 \eta_Z'(t)\leq C \eta_Z(t).
\end{equation}
 In particular $\eta_Z$ is strictly decreasing and 
\begin{equation}\label{limits}
\lim_{t\to \infty} \eta_{Z}(t)=0,\ \lim_{t\to -\infty} \eta_{Z}(t)=+\infty.	
\end{equation}

\item Put $\Sigma\edf \eta^{-1}(1)$.  The map $F:\R\times \Sigma \to W$ defined by $F(t,Z)=f^t(Z)$ is a diffeomorphism.
\item  Endowing $W$ with  the conjugation $c$ and $\R\times \Sigma$ with the involution $\id\times c_\Sigma$ (where $c_\Sigma$ denotes the involution induced by  $c$ on $\Sigma$), $F$ is equivariant.     
\end{enumerate}

\end{lm}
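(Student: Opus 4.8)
The plan is to prove the three assertions in order, since each builds on the previous one. For part (1), I would compute the logarithmic derivative $\eta_Z'(t)/\eta_Z(t)$ and show it is bounded above by $C$. Using identity (\ref{eta'Z}) it suffices to estimate $\eta_Z'(0)$ in terms of $\eta_Z(0)=\eta(Z)$, uniformly in $Z$; the general $t$ then follows by applying the bound at the point $f^t(Z)$. In the diagonal cases $IV$, $III$, $II_c$ (where $\eta=\eta^1_1=|z|^2+|w|^2$) this is a direct computation: differentiating $|\alpha^t z|^2+|\delta^t w|^2 = \alpha^{2t}|z|^2+\delta^{2t}|w|^2$ gives $\eta_Z'(t)=2\ln(\alpha)\,\alpha^{2t}|z|^2+2\ln(\delta)\,\delta^{2t}|w|^2$, which is bounded above by $C\,\eta_Z(t)$ once $C\ge 2\max(\ln\alpha,\ln\delta)$ (both logarithms negative). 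The integrated form then yields (\ref{limits}) by Gronwall / direct integration: $\eta_Z(t)\le\eta_Z(0)e^{Ct}$ forces the limit $0$ as $t\to+\infty$, and strict monotonicity plus the group law handle $t\to-\infty$.

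For the off-diagonal classes $\widetilde{II}_a$ and $\widetilde{II}_b$ the computation is the genuinely delicate point, and this is where I expect the main obstacle. Here $\eta=\eta^q_B$ with the mixed cross-term coming from the $ct\delta^{r(t-1)}w^r$ (respectively $ct\alpha^{t-1}w$) summand in $f^t$, so $\eta_Z(t)$ acquires a term of the form $|\delta^{rt}z+ct\delta^{r(t-1)}w^r|^2$. Differentiating produces a cross-term linear in $c$ whose sign is not controlled, and the whole point of the hypothesis $B\ge c^2/(r^2\delta^{2r}\ln(\delta)^2)$ is to dominate this indefinite cross-term by the positive $B|w|^{2q}$ contribution. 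The key estimate will be a completing-the-square / Cauchy--Schwarz argument showing that the bad cross-term is absorbed precisely when $B$ is this large, so that $\eta_Z'(t)\le r\ln(\delta)\,\eta_Z(t)$ still holds. I would isolate this as the crux and carry out the square-completion explicitly for $\widetilde{II}_a$, noting that $\widetilde{II}_b$ is the case $r=1$, $q=1$.

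For part (2), the strict monotonicity and the limits (\ref{limits}) from part (1) show that for each $Z\in W$ the orbit $t\mapsto f^t(Z)$ meets the level set $\Sigma=\eta^{-1}(1)$ in exactly one point, by the intermediate value theorem applied to $\eta_Z$; this gives that $F:\R\times\Sigma\to W$ is a bijection. Smoothness of $F$ is clear since $(f^t)$ is a smooth flow, and smoothness of the inverse follows from the implicit function theorem applied to $\eta(f^t(Z))=1$ using $\eta_Z'\ne 0$ (a consequence of the strict inequality in (\ref{DiffIneq}) together with $\eta_Z>0$), so $F$ is a diffeomorphism; alternatively one invokes the submersion property of $\eta$ from Remark \ref{eta-n}(1) to see $\Sigma$ is a smooth hypersurface transverse to the flow. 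Finally, for part (3), since each $f^t$ has real coefficients it commutes with the conjugation $c$, so $c\circ f^t=f^t\circ c$; as $\eta$ is visibly conjugation-invariant, $c$ preserves $\Sigma$ and one reads off directly that $F(t,c_\Sigma(Z))=f^t(c(Z))=c(f^t(Z))=c(F(t,Z))$, which is exactly the equivariance of $F$ with respect to $\id\times c_\Sigma$ and $c$.
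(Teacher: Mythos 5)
Your proposal is correct and follows essentially the same route as the paper: the crux in both is the estimate of $\eta_Z'(0)$ for the classes $\widetilde{II}_a$, $\widetilde{II}_b$, where the indefinite cross-term $2c\,\delta^{-r}\Re(z\bar w^r)$ is absorbed by a Young/Cauchy--Schwarz inequality with a parameter $\varepsilon$, the hypothesis on $B$ guaranteeing that the coefficient of $|w|^{2r}$ stays bounded below by $\tfrac{1}{2}B$, which accounts for the halving of the constant from $2r\ln(\delta)$ to $C=r\ln(\delta)$. The only (inessential) divergence is in part (2): you invoke the implicit function theorem on $\eta(f^t(Z))=1$ to get smoothness of $F^{-1}$, whereas the paper shows directly that $dF_{(0,Z)}$ has trivial kernel using $d\eta$ and the strict negativity of $\eta_Z'(0)$, then propagates via the flow identity $F\circ(\tau_t,\mathrm{id}_\Sigma)=f^t\circ F$; both arguments rest on the same transversality fact.
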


\begin{proof}
(1) The proof of (\ref{DiffIneq}) is based on formula (\ref{eta'Z}). For $f\in IV\cup III\cup II_c$ we have $\eta(z,w)=|z|^2+|w|^2$ and the computation of $\eta_Z'(0)$ is very easy. For $f\in \widetilde{II}_a$ one obtains for any $\varepsilon>0$:

\begin{equation}
\begin{split}
\eta'_Z(0)&=2r\ln(\delta)\big(|z|^2+ \frac{\delta^{-r}}{r\ln(\delta)}c\Re(z{\bar w^r})+ B|w|^{2r}) \\ 	
&\leq 2r\ln(\delta)\left( \bigg( 1+ \frac{\delta^{-r}\varepsilon}{2r \ln(\delta)}\bigg)|z|^2+\bigg(B+ \frac{\delta^{-r}c^2}{2r \ln(\delta)\varepsilon}\bigg)|w^r|^2 \right).
\end{split}	
\end{equation}
We first choose $\varepsilon=\delta^r {r|\ln(\delta)|}$ and we note that, for $B\geq  \frac{c^2}{ r^2\delta^{2r}\ln(\delta)^2}$ we have
$$B+ \frac{\delta^{-r}c^2}{2r \ln(\delta)\varepsilon}\geq \frac{1}{2}B.
$$
The case $f\in II_b$ is similar.
The formulae (\ref{limits}) follow from (\ref{DiffIneq}) by integrating the inequality 
$\frac{\eta'}{\eta}\leq C$.
\vspace{2mm}\\
(2) For the injectivity of $F$ let $(t,Z)$, $(t',Z')\in \R\times\Sigma$ such that $F(t,Z)=F(t',Z')$. This implies  $f^{t-t'}(Z')=Z$. Applying $\eta$ on both sides we obtain $\eta_{Z'}(t-t')=1=\eta_{Z'}(0)$.  Since $\eta_{Z'}$ is strictly decreasing, this implies $t=t'$, so we also have $Z'=Z$.

For the surjectivity, its suffices to note that for any $Z\in W$ there exists $t\in\R$ such that $f^{-t}(Z)\in\Sigma$, which is equivalent to $\eta(f^{-t}(Z))=1$, i.e. $\eta_Z(-t)=1$. But (\ref{limits}) shows that $\eta_Z(\R)=(0,+\infty)$.

$F$ is obviously differentiable. To see that it is a diffeomorphism it suffices to prove that $F$ is a local diffeomorphism, i.e. that the differential $d_{(t,Z)} F$  is invertible for any $(t,Z)\in \R\times\Sigma$. 

For $t\in \R$ denote by $\tau_t:\R\to \R$ the translation by $t$, i.e. $\tau_t(s)=s+t$.  Note that   
$$
F\circ (\tau_t,\id_\Sigma)=f^t\circ F.
$$
This implies
$$
F_{* (s+t,Z)}=F_{*(\tau_t,\id_\Sigma)(s,Z)}\circ (\tau_t,\id_\Sigma)_{*(s,Z)}=f^t_{*F(s,Z)}\circ F_{*(s,Z)}.
$$
For $s=0$ we get
$$
F_{*(t,Z)} =f^t_{*Z}\circ F_{*(0,Z)}.
$$

Since $f^t$ is a diffeomorphism, $f^t_{*Z}$ is a linear isomorphism, so it suffices to prove  that $F_{*(0,Z)}$ is a linear isomorphism. Taking into account the dimensions, it suffices to prove that $\ker(F_{*(0,Z)})=0$.  But
\begin{equation}\label{F*(0,Z)}
F_{*(0,Z)}(h,v)=\frac{\partial F}{\partial t}(0,Z) h+ v, \ \forall (h,v)\in\R \times T_Z(\Sigma).
\end{equation} 

Let  $(h,v)\in\R \times T_Z(\Sigma)$ such that $F_{*(0,Z)}(h,v)=0$. It follows 
$$0=d\eta(F_{*(0,Z)}(h,v))=h\, d\eta(\frac{\partial F}{\partial t}(0,Z))+ d\eta(v).$$
Since $\eta|_{\Sigma}\equiv 1$ and $v\in T_Z(\Sigma)$ we have $d\eta(v)=0$. On the other hand
$$d\eta(\frac{\partial F}{\partial t}(0,Z))=\frac{d}{dt}|_{t=0}\,  \eta (F(t,Z))=\frac{d}{dt}|_{t=0}\,  \eta_Z (t),
$$
which is negative by (\ref{DiffIneq}).  Therefore $F_{*(0,Z)}(h,v)=0$ implies $h=0$. Coming back to (\ref{F*(0,Z)}) we obtain $v=0$.
\vspace{2mm}\\
(3)
Since $f^t$ has real coefficients, for any $t$, we have
$$F(t,\bar Z)=f^t(\bar Z)=\overline{f^t(Z)}=\overline{F(t, Z)}.
$$
\end{proof}

\begin{proof} (of Proposition \ref{SignsCoeff}) 

(1) By Lemma \ref{ft-F} the map $F:\R\times \Sigma \to W$ defined by $F(t,Z)=f^t(Z)$
is a diffeomorphism.   $F$ induces a diffeomorphism  
$$
\tilde F:(\R/\Z)\times \Sigma\to \qmod{W}{\langle f\rangle} = H_f
$$
 given by $\tilde F([t]_{\Z},Z)=[f^t(Z)]_{\langle f \rangle}$.

 Let $e:\R/\Z\to S^1$ be the standard diffeomorphism and $n:\Sigma\to S^3$ be the given by Remark \ref{eta-n}. We obtain the diffeomorphisms
 $$
 S^1\times S^3\textmap{ e^{-1}\times n^{-1}} (\R/\Z)\times \Sigma\textmap{\tilde F} H_f
 $$ 
 which (by Remark \ref{eta-n} (2) and Lemma \ref{ft-F} (3)) are equivariant with respect to the involutions $\tau$ on  $S^1\times S^3$, $\id\times c_\Sigma$   on $(\R/\Z)\times\Sigma$ and $s_f$ on $H_f$.
\vspace{2mm}

(2) Note first that, under  our assumption, we have either $f\in III\cup II_a$   with  $\delta<0$ and $r$ even (in which cases the second diagonal coefficient of $f$ is negative), or $f\in II_c$ with $\alpha\delta<0$. In the latter case we may suppose $\delta<0$ (see Theorem \ref {Wehl}(2)). Therefore we may always assume that the  second diagonal coefficient of $f$ is negative.

We apply Lemma \ref{ft-F} to $g=f^2$ which has real  coefficients and positive diagonal coefficients. Note that for $f\in II_a$ with diagonal coefficients $\delta^r$, $\delta$ we have $f^2\in  \widetilde{II}_a$ with diagonal coefficients $(\delta^2)^r$, $\delta^2$ and non-diagonal coefficient  $c=2\delta^r$.

The diffeomorphism $G:\R\times \Sigma \to W$,  $G(t,Z)=g^t(Z)$ given by Lemma \ref{ft-F} applied to $g$ induces a diffeomorphism 
$
\tilde  G:(\R/\Z)\times \Sigma\to H_g=H_{f^2}
$
(as above) which is equivariant with respect to the involutions $\id\times c_\Sigma$ and $s_g$.

Our primary Hopf surface  $H_f=W/\langle f\rangle $ is identified with the quotient   of $H_g$ by the involution $\hat f$ induced by $f$ on $H_g$,  which  is given explicitly by  
$$\hat f([Z]_{\langle g\rangle})=[f(Z)]_{\langle g\rangle}$$
and whose  fixed point locus is empty.  Let $\Jg':\R\times\Sigma\to \R\times\Sigma$ be the diffeomorphism  
$$
\Jg'(t,(u,v))\edf \big(t+\frac{1}{2},(u,-v)\big),
$$ 
and let $\jg':(\R/\Z)\times\Sigma\to  (\R/\Z)\times\Sigma$ be the {\it involution}  induced by $\Jg'$. Direct computations give
\begin{equation}
G\circ\Jg'=f\circ G,	 
\end{equation}
which obviously implies 
\begin{equation}
\tilde  G\circ\jg'=\hat f\circ \tilde G.	 
\end{equation}
Therefore $\tilde  G$ induces a diffeomorphism
$$
\hat G: \Qg'\edf \qmod{\R/\Z \times\Sigma}{\langle \jg'\rangle }\textmap{\simeq} \qmod{H_g}{\langle \hat f\rangle}=H_f
$$
(between the indicated free quotients) which is equivariant with respect to the following involutions:  $s_f$ on $H_f$ and  the involution  $\tg'$  induced by  $\id\times c_\Sigma$ on $\Qg'$.  
 
 It suffices to note that the diffeomorphism $e\times n:\R/\Z\times\Sigma\to S^1\times S^3$ induces  a  diffeomorphism $\Qg'\to  Q'$, which, by Remark \ref{eta-n} (2) is equivariant with respect to the involutions $\tg'$, $
 \theta'$.
 \vspace{2mm}\\
 (3)   We use similar arguments noting that  in this case 
 \begin{equation}
G\circ\Jg''=f\circ G,	 
\end{equation}
where $\Jg'':\R\times\Sigma\to \R\times\Sigma$  is   given by $\Jg''(t,(u,v))\edf \big(t+\frac{1}{2},(-u,-v)\big)$. Denoting  
by  $\Qg''$ the quotient of $\R/\Z \times\Sigma$ by the involution $\jg''$  induced by $\Jg''$, we  
obtain a  diffeomorphism $\Qg''\to Q''$ induced again by $e\times n$ which is equivariant with respect to the involutions $\tg''$ (defined similarly) and $\theta''$.

\end{proof}

Let $f\in II'_c$  be of the form
$$
f\begin{pmatrix}z\\w\end{pmatrix}=\begin{pmatrix}\alpha z\\\bar \alpha w\end{pmatrix}
$$
with $|\alpha|<1$, $\alpha\in\C\setminus\R$.
Consider the Real structure $c'$ on $W$ given in Example \ref{ExStandard}:
$$
c'\begin{pmatrix}z\\w\end{pmatrix}=\begin{pmatrix}\bar w\\ \bar z\end{pmatrix}.
$$

\begin{lm}
Let $\tau:(0,+\infty)\to \R$ be a ${\cal C}^\infty$ map. The map  
$$
\Psi_\tau:W\to W,\ \Psi_\tau(Z)\edf R_{e^{i\tau(\|Z\|)}} Z
$$
is a diffeomorphism.
\end{lm}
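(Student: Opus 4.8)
The plan is to exploit the single structural feature that makes this map tractable: the matrices $R_\zeta$ are Euclidean isometries, so although $\Psi_\tau$ rotates each point by an angle depending on its distance to the origin, it leaves that distance unchanged. This forces $\Psi_\tau$ to preserve the spheres centred at $0$ and makes its inverse completely explicit.

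First I would record the norm-invariance. By Definition \ref{Rzeta} the matrix $R_\zeta$ is real and orthogonal, so $R_\zeta^* R_\zeta = \trp{R_\zeta} R_\zeta = I_2$, and hence $\|R_\zeta Z\| = \|Z\|$ for every $Z\in\C^2$ and every $\zeta\in S^1$ with respect to the Hermitian norm $\|Z\|^2=|z|^2+|w|^2$. Applying this with $\zeta = e^{i\tau(\|Z\|)}$ gives $\|\Psi_\tau(Z)\| = \|Z\|$ for all $Z\in W$. Three consequences follow at once: $\Psi_\tau$ sends $W$ into $W$, since it cannot map a nonzero vector to $0$; it is smooth on $W$, because $Z\mapsto\|Z\|$ is smooth on $W$, $\rho\mapsto e^{i\tau(\rho)}$ and $\zeta\mapsto R_\zeta$ are smooth, and matrix–vector multiplication is smooth; and it preserves each sphere $\{\|Z\|=\rho\}$, on which it acts by the single rotation $R_{e^{i\tau(\rho)}}$.

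Next I would exhibit the inverse. Let $-\tau$ denote the $\mathcal{C}^\infty$ map $\rho\mapsto-\tau(\rho)$, and consider $\Psi_{-\tau}$, which is of the same form and hence smooth on $W$. Since the standard isomorphism $S^1\to\SO(2)$ is a group morphism we have $R_{\zeta_1}R_{\zeta_2}=R_{\zeta_1\zeta_2}$, and the norm-invariance yields $\tau(\|\Psi_\tau(Z)\|)=\tau(\|Z\|)$; combining these,
$$
\Psi_{-\tau}\bigl(\Psi_\tau(Z)\bigr)=R_{e^{-i\tau(\|\Psi_\tau(Z)\|)}}\,R_{e^{i\tau(\|Z\|)}}\,Z=R_{e^{-i\tau(\|Z\|)}e^{i\tau(\|Z\|)}}\,Z=R_1 Z=Z,
$$
and symmetrically $\Psi_\tau\circ\Psi_{-\tau}=\id_W$. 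As both maps are smooth, $\Psi_\tau$ is a diffeomorphism of $W$.

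There is no genuine obstacle in this argument: the entire content is the remark that the rotation angle depends only on $\|Z\|$ while the rotation leaves $\|Z\|$ fixed, so the angle assigned to the image coincides with the angle assigned to the source and the inverse is obtained simply by reversing the sign of $\tau$. The only point deserving a line of care is to confirm that $R_\zeta$, although we view it inside $\GL(2,\C)$, acts as an isometry of the Hermitian norm, which is immediate from its membership in $\SO(2)$ and the reality of its entries.
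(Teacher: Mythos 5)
Your proof is correct and follows essentially the same route as the paper: the paper verifies the composition law $\Psi_\tau\circ\Psi_\theta=\Psi_{\tau+\theta}$ (which rests on exactly the norm-invariance $\|\Psi_\theta(Z)\|=\|Z\|$ that you make explicit) and concludes that $\Psi_{-\tau}$ is the smooth inverse. Your version merely spells out the isometry step that the paper leaves as a direct computation.
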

\begin{proof}
It suffices to note that $\Psi_\tau$ is obviously  differentiable  and that, for $\tau$, $\theta\in   {\cal C}^\infty((0,+\infty), \R)$, we have $\Psi_\tau\circ \Psi_\theta=\Psi_{\tau+\theta}$. It follows that $\Psi_\tau\circ \Psi_{-\tau}=\Psi_{-\tau}\circ \Psi_\tau=\id_W$, in particular $\Psi_\tau$ is bijective and its inverse is $\Psi_{-\tau}$, which is also differentiable.
\end{proof}

\begin{lm}\label{II'c-lg}
Let $\theta\in\R$ be such that by 	$
\frac{1}{|\alpha|}\alpha=e^{i\theta}$ and $\tau\in {\cal C}^\infty((0,+\infty), \R)$ given by $\tau(t)=\frac{\theta}{\ln|\alpha|}\ln(t)$. 
Let $L\edf \begin{pmatrix}1&i\\ 1 & - i\end{pmatrix}\in\GL(2,\C)$, $l:W\to W$ the associated  diffeomorphism, and $\lg\edf l\circ\Psi_\tau$.  Then 
\begin{enumerate}
\item We have  
$$l^{-1}\circ c'\circ l=c,\ l^{-1}\circ f\circ l=|\alpha| R_{\frac{1}{|\alpha|}\alpha}.$$
\item We have
$$\lg^{-1}\circ f\circ \lg=f_{|\alpha|},\ \lg^{-1}\circ c'\circ \lg=c.
$$
Therefore $\lg$ induces an equivariant diffeomorphism $ (H_{f_{|\alpha|}},s_{f_{|\alpha|}})\stackrel{\hat\lg}{\to}(H_f,s_f)$.
\end{enumerate}

\end{lm}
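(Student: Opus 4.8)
The plan is to establish the two matrix identities in (1) by a direct linear-algebra computation, and then to derive (2) by conjugating with $\Psi_\tau$ and exploiting two structural features of the maps involved: their homogeneity with respect to the Euclidean norm and the translation identity satisfied by the logarithmic weight $\tau$.

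For (1), I would write the two maps in matrix form as $c'(Z)=P\bar Z$ with $P=\left(\begin{smallmatrix}0&1\\1&0\end{smallmatrix}\right)$ and $f(Z)=DZ$ with $D=\mathrm{diag}(\alpha,\bar\alpha)$. The relation $l^{-1}\circ c'\circ l=c$ then amounts to $L^{-1}P\bar L=I_2$, i.e.\ to $\bar L=PL$, which is read off directly from the shape of $L$. The relation $l^{-1}\circ f\circ l=|\alpha|R_{\frac{1}{|\alpha|}\alpha}$ amounts to $L^{-1}DL=\left(\begin{smallmatrix}\Re\alpha&-\Im\alpha\\ \Im\alpha&\Re\alpha\end{smallmatrix}\right)$, a direct evaluation of the product $L^{-1}DL$; writing $\alpha=|\alpha|e^{i\theta}$ (so $\frac{1}{|\alpha|}\alpha=e^{i\theta}$) identifies the right-hand side with $|\alpha|R_{e^{i\theta}}$, as claimed.

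For (2), set $g:=l^{-1}\circ f\circ l=|\alpha|R_{e^{i\theta}}$, available from (1), and use $\lg^{-1}=\Psi_{-\tau}\circ l^{-1}$ together with $l^{-1}\circ c'\circ l=c$ to rewrite the two conjugations as $\lg^{-1}\circ f\circ\lg=\Psi_{-\tau}\circ g\circ\Psi_\tau$ and $\lg^{-1}\circ c'\circ\lg=\Psi_{-\tau}\circ c\circ\Psi_\tau$. Two facts make these collapse. First, $R_\zeta$ and $c$ preserve $\|\cdot\|$ while $g$ rescales it by $|\alpha|$; in particular $\Psi_\tau$ preserves $\|\cdot\|$, so the value of $\tau$ at each stage of the composition is explicitly computable. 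Second, the logarithmic choice $\tau(t)=\frac{\theta}{\ln|\alpha|}\ln t$ yields the translation identity $\tau(|\alpha|t)=\tau(t)+\theta$. Using the homomorphism property $R_\zeta R_{\zeta'}=R_{\zeta\zeta'}$ of the abelian group $\SO(2)$, a short computation then gives $\Psi_{-\tau}\circ g\circ\Psi_\tau(Z)=|\alpha|Z=f_{|\alpha|}(Z)$, the inner rotation by $\tau(\|Z\|)$ and the outer rotation by $-\tau(|\alpha|\,\|Z\|)$ cancelling together with the $\theta$ from $g$ precisely because of the translation identity. For the conjugation, since $R_\zeta$ is a real matrix one has $\overline{R_\zeta Z}=R_\zeta\bar Z$, and norm preservation keeps the weight of $\tau$ unchanged, so the outer and inner rotations cancel outright and $\Psi_{-\tau}\circ c\circ\Psi_\tau=c$.

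Finally, the conjugacy relations $f\circ\lg=\lg\circ f_{|\alpha|}$ and $c'\circ\lg=\lg\circ c$ show that $\lg$ carries the deck group $\langle f_{|\alpha|}\rangle$ to $\langle f\rangle$ and intertwines the two lifted conjugations; hence $\lg$ descends to a diffeomorphism $\hat\lg:H_{f_{|\alpha|}}\to H_f$ equivariant with respect to the standard Real structures $s_{f_{|\alpha|}}$ (induced by $c$) and $s_f$ (induced by $c'$, cf.\ Example \ref{ExStandard}). There is no serious obstacle here: the argument is mechanical once the correct weight $\tau$ is in place, and the two points I would isolate and verify first are the norm-preservation of $\Psi_\tau$ and the identity $\tau(|\alpha|t)=\tau(t)+\theta$, since these are exactly what force the rotational part $R_{e^{i\theta}}$ of $g$ to be absorbed by the twist, leaving the pure homothety $f_{|\alpha|}$.
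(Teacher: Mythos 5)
Your proposal is correct and coincides with the paper's argument, which simply records these identities as ``direct computations'': the matrix identities $\bar L=PL$ and $L^{-1}\mathrm{diag}(\alpha,\bar\alpha)L=|\alpha|R_{e^{i\theta}}$ give (1), and the norm-preservation of $\Psi_\tau$ together with the translation identity $\tau(|\alpha|t)=\tau(t)+\theta$ make the conjugations in (2) collapse exactly as you describe. The descent to the quotients is the standard consequence of the intertwining relations, as in the paper.
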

\begin{proof}
Direct computations
\end{proof}

Taking into account that $f_{|\alpha|}$ belongs to the class $IV$ and has positive diagonal coefficients, we obtain by Proposition \ref{SignsCoeff}(1):
\begin{pr}\label{ClassDiffII'c}
Any even  Real Hopf surface 	$(H_f,s_f)$ with $f\in II'_c$ is equivariantly diffeomorphic to $(S^1\times S^3,\tau)$.
\end{pr}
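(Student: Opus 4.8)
The plan is to reduce directly to the case already settled in Proposition \ref{SignsCoeff}, exploiting the equivariant diffeomorphism manufactured in Lemma \ref{II'c-lg}. All the substantive geometry has been packaged into that lemma, so the argument becomes merely a composition of two equivariant diffeomorphisms.

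Concretely, I would first invoke Lemma \ref{II'c-lg}(2): the diffeomorphism $\lg=l\circ\Psi_\tau$ of $W$ satisfies $\lg^{-1}\circ f\circ \lg=f_{|\alpha|}$ and $\lg^{-1}\circ c'\circ \lg=c$, and hence descends to an equivariant diffeomorphism $\hat\lg:(H_{f_{|\alpha|}}, s_{f_{|\alpha|}})\to (H_f,s_f)$, where $s_{f_{|\alpha|}}$ is the standard even Real structure induced by the conjugation $c$. Next, I would observe that $f_{|\alpha|}(z,w)=(|\alpha| z,|\alpha| w)$ lies in the class $IV$ with diagonal coefficient $|\alpha|\in(0,1)$, which is real and positive; Proposition \ref{SignsCoeff}(1) therefore applies and yields an equivariant diffeomorphism $(H_{f_{|\alpha|}}, s_{f_{|\alpha|}})\to (S^1\times S^3,\tau)$.

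Composing the latter with $\hat\lg^{-1}$ produces the desired equivariant diffeomorphism $(H_f,s_f)\to (S^1\times S^3,\tau)$. There is no genuine obstacle here beyond the bookkeeping of the Real structures: one only has to confirm that the even Real structure on $H_{f_{|\alpha|}}$ to which Lemma \ref{II'c-lg} refers --- namely the one pulled back from $c'$ along $\lg$ --- coincides with the standard structure $s_{f_{|\alpha|}}$ to which Proposition \ref{SignsCoeff} applies, and this is immediate from the identity $\lg^{-1}\circ c'\circ \lg=c$. The only reason this case is separated from Proposition \ref{SignsCoeff} at all is that the defining coefficients of $f\in II'_c$ are \emph{not} real, so one cannot apply the 1-parameter group construction of Lemma \ref{ft-F} to $f$ itself; the whole point of Lemma \ref{II'c-lg} is to rotate $f$ into the real, positive-coefficient contraction $f_{|\alpha|}$, after which the already-proved result takes over.
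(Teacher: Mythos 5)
Your argument is correct and is exactly the paper's proof: the paper also deduces this proposition by combining Lemma \ref{II'c-lg}(2), which turns $\lg$ into an equivariant diffeomorphism $(H_{f_{|\alpha|}},s_{f_{|\alpha|}})\to(H_f,s_f)$, with Proposition \ref{SignsCoeff}(1) applied to the class $IV$ contraction $f_{|\alpha|}$ with positive diagonal coefficient. Your remark that the identity $\lg^{-1}\circ c'\circ\lg=c$ is what guarantees the Real structure on $H_{f_{|\alpha|}}$ is the standard one is precisely the point the paper relies on.
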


\subsection{The differential-topological classification of odd Real Hopf surfaces }

The goal of this section is the following classification theorem

\begin{thry}\label{ClassDiffOdd}
Every odd Real primary Hopf surface is equivariantly diffeomorphic to $(S^1\times S^3,\mu)$, where $\mu$ is the involution 
$$
\mu(\zeta, Z)\edf(-\zeta, \bar Z).
$$
\end{thry}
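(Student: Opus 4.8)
The plan is to use the classification of odd Real structures (Theorem \ref{ClassOdd}) to reduce to three standard models, and then in each case to exhibit an explicit equivariant diffeomorphism onto $(S^1\times S^3,\mu)$. By Theorem \ref{ClassOdd} every odd Real primary Hopf surface is equivalent to one of: \textbf{(A)} $(H_f,\sigma_f)$ with $f\in III\cup II_a\cup II_b\cup II_c$ having real coefficients and positive diagonal coefficients, or $f\in IV$ with $\alpha\in(0,1)$, the chosen lift being $\hat\sigma=c\circ f^{\frac12}$; \textbf{(B)} $(H_f,\sigma_f)$ with $f\in II'_c$, the lift being $\hat\sigma=c'\circ f^{\frac12}$; \textbf{(C)} $(H_f,\sg_f)$ with $f\in IV$, $\alpha\in(-1,0)$, the lift being $\hat\sigma=J\circ f^{\frac12}$. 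In every case the lift satisfies $\hat\sigma^2=f$; it is precisely this relation that, after passing to $\qmod{W}{\langle f\rangle}$, produces the half-period shift responsible for the factor $-\zeta$ in $\mu$.

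For case \textbf{(A)} I would reuse the apparatus of Lemma \ref{ft-F}: the $1$-parameter group $(f^t)_{t\in\R}$ and the transversal $\Sigma=\eta^{-1}(1)$ give an equivariant diffeomorphism $F:\R\times\Sigma\to W$, $F(t,Z)=f^t(Z)$. Since $f^t$ has real coefficients it commutes with $c$, so
\[
\hat\sigma\bigl(F(t,Z)\bigr)=c\bigl(f^{t+\frac12}(Z)\bigr)=f^{t+\frac12}\bigl(c(Z)\bigr)=F\!\left(t+\tfrac12,\,c_\Sigma(Z)\right),
\]
i.e. under $F$ the lift $\hat\sigma$ becomes $(t,Z)\mapsto(t+\tfrac12,c_\Sigma(Z))$. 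Descending to $H_f=(\R/\Z)\times\Sigma$ and using the identifications $e:\R/\Z\to S^1$ and $n:\Sigma\to S^3$ of Remark \ref{eta-n}, the half-shift $t\mapsto t+\tfrac12$ becomes $\zeta\mapsto-\zeta$ and $c_\Sigma$ becomes $Z\mapsto\bar Z$, yielding $(H_f,\sigma_f)\simeq(S^1\times S^3,\mu)$. Case \textbf{(B)} is handled identically, replacing $c$ by $c'$ and taking the flow $f^t(z,w)=(\alpha^t z,\bar\alpha^t w)$ on $\Sigma=S^3$ (which commutes with $c'$ and has $\|f^t(Z)\|=|\alpha|^t\|Z\|$); the only additional remark is that $c'|_{S^3}$ is \emph{unitarily} conjugate to complex conjugation (both are anti-unitary involutions with a totally real fixed form, hence $U(2)$-conjugate), and the conjugating unitary, acting as $\id_{S^1}\times\chi$, carries the resulting involution onto $\mu$.

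The main obstacle is case \textbf{(C)}, where the flow of Lemma \ref{ft-F} is unavailable (negative diagonal coefficient) and $\hat\sigma^2=f$ is multiplication by the \emph{negative} scalar $\alpha$. Here I would pass to the quaternionic model $W=\H\setminus\{0\}$ via $(z,w)\leftrightarrow z+wj$. Then $f$ is multiplication by $\alpha=-|\alpha|$, and a direct computation (using $J=L_j$ and $f^{\frac12}=L_{i|\alpha|^{1/2}}$) identifies $\hat\sigma=J\circ f^{\frac12}$ with left multiplication $L_{q_0}$ by the imaginary quaternion $q_0=-|\alpha|^{1/2}k$, so that $L_{q_0}^2=L_{q_0^2}=L_{\alpha}=f$. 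Writing $q=ru$ with $r=\|q\|$, $u\in S^3$, the deck transformation $f$ becomes $(r,u)\mapsto(|\alpha|r,-u)$, exhibiting $H_f$ as the mapping torus of the antipodal map, while $\hat\sigma$ becomes $(r,u)\mapsto\bigl(|\alpha|^{1/2}r,(-k)u\bigr)$.

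To finish case \textbf{(C)} I would untwist this mapping torus by the right-multiplication gauge $T(r,u)=\bigl(r,\,u\,R(r)\bigr)$ with $R(r)=e^{i\pi\ln r/\ln|\alpha|}\in S^1\subset\H$, chosen so that $R(|\alpha|r)=-R(r)$; then $T$ conjugates the deck transformation to the pure dilation $(r,u)\mapsto(|\alpha|r,u)$, giving $H_f\simeq S^1\times S^3$, and it carries $\hat\sigma$ to the \emph{product} involution $(r,u)\mapsto\bigl(|\alpha|^{1/2}r,\,M(u)\bigr)$ with $M(u)=(-k)\,u\,i$, the accumulated gauge factor being $R(r)^{-1}R(|\alpha|^{1/2}r)=e^{i\pi/2}=i$. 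Finally $M=L_{-k}R_{i}$ is an involution of $S^3$ conjugate to complex conjugation: the latter corresponds to $\kappa(u)=juj^{-1}$, and choosing unit quaternions $a,b$ with $a(-k)a^{-1}=j$, $b^{-1}ib=-j$ (possible since $S^3$ acts transitively on unit imaginary quaternions) gives $\chi=L_aR_b$ with $\chi M\chi^{-1}=\kappa$. Applying $\id_{S^1}\times\chi$ and identifying the half-dilation $r\mapsto|\alpha|^{1/2}r$ with $\zeta\mapsto-\zeta$ on $S^1=\R/(\ln|\alpha|)\Z$ produces the equivariant diffeomorphism $(H_f,\sg_f)\simeq(S^1\times S^3,\mu)$, which completes the proof.
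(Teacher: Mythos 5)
Your reduction to the three cases (A), (B), (C) via Theorem \ref{ClassOdd} and your treatment of case (A) coincide with the paper's proof: both use the diffeomorphism $F(t,Z)=f^t(Z)$ of Lemma \ref{ft-F}, observe that $\hat\sigma=c\circ f^{\frac12}$ becomes the half-shift $(t,Z)\mapsto(t+\frac12,\bar Z)$, and transport this to $\mu$ via $e\times n$. For the other two cases you take genuinely different, and correct, routes. In case (B) the paper does not rerun the flow argument for non-real $\alpha$; it instead invokes Lemma \ref{II'c-lg} to conjugate $(H_f,\sigma_f)$, $f\in II'_c$, onto $(H_{f_{|\alpha|}},\sigma_{f_{|\alpha|}})$ (checking $\lg\circ(c\circ f_{|\alpha|}^{1/2})\circ\lg^{-1}=c'\circ f^{1/2}$) and then falls back on case (A); your direct flow $f^t(z,w)=(\alpha^tz,\bar\alpha^tw)$ with $\Sigma=S^3$ works equally well, but note that Lemma \ref{ft-F} as stated assumes real coefficients, so you do owe the (easy) verification that $F$ is still a diffeomorphism here, and your anti-unitary conjugation of $c'|_{S^3}$ to $c|_{S^3}$ is exactly the content of the matrix $L$ in Lemma \ref{II'c-lg}. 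In case (C) the paper passes to the double cover $H_{f^2}$, realizes $H_f$ as the quotient $Q''$ by $\jg''$, lifts $\sg_f$ to the order-four map $\mg(t,Z)=(t+\frac14,JiZ)$, and untwists via the map $\hat a''$ of Lemma \ref{Q'Q''}; your quaternionic computation -- identifying $\hat\sigma=J\circ f^{\frac12}$ with left multiplication by $-|\alpha|^{1/2}k$, untwisting the antipodal mapping torus by the right gauge $R(r)=e^{i\pi\ln r/\ln|\alpha|}$, and conjugating $L_{-k}R_i$ to $u\mapsto juj^{-1}$ by an $L_aR_b$ -- reaches the same conclusion without introducing the intermediate quotients $Q''$, $\Qg''$, at the price of more explicit algebra. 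Your argument checks out (I verified $q_0^2=\alpha$, the accumulated gauge factor $R(r)^{-1}R(|\alpha|^{1/2}r)=i$, that $M=L_{-k}R_i$ is an involution, and that any two maps of the form $L_pR_q$ with $p,q$ unit imaginary quaternions are conjugate under $S^3\times S^3$); the paper's version has the advantage of reusing the even-case machinery ($Q''$, $\hat a''$) already set up for Theorem \ref{ClassDiffEven}, while yours is more self-contained and makes the $\Spin^c(3)$-flavoured quaternionic symmetry of this case (cf.\ Corollary \ref{Spinc(3)}) visible.
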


\begin{proof}
By the classification Theorem \ref{ClassOdd} we know that any odd Real primary Hopf surface is (equivariantly  biholomorphically) isomorphic to one of the following:
\begin{enumerate}
	\item $(H_f,\sigma_f)$, where $f\in IV\cup III\cup II_a\cup II_b\cup II_c$ has real coefficients and postive diagonal coefficients and $\sigma_f$ is induced by $c\circ f^{\frac{1}{2}}$,  where $f^{\frac{1}{2}}$ is defined in  Remark \ref{kroot}.
	\item $(H_f,\sigma_f)$, where $f\in  II'_c$ and $\sigma_f$ is induced by $c'\circ f^{\frac{1}{2}}$, where $f^{\frac{1}{2}}$ is defined in Proposition \ref{class-odd-II'c}.
	\item $(H_f,\sg_f)$, where $f\in IV$ has negative diagonal coefficient and $\sg$ is induced by $J\circ f^{\frac{1}{2}}$, where $f^{\frac{1}{2}}$ is defined in Proposition \ref{oddIV-neg-coeff}.
\end{enumerate}	
(1) In the first case note that the square root  we use the diffeomorphism 
$$
\tilde F:(\R/\Z)\times \Sigma\to \qmod{W}{\langle f\rangle} = H_f
$$
as in the proof of Proposition \ref{SignsCoeff}, and we note that the involution $\sigma_f$ on $H_f$ corresponds via $\tilde F$ to the involution 
$$
([t],Z)\mapsto ([t+\frac{1}{2}],\bar Z)
$$
on $(\R/\Z)\times \Sigma$, which corresponds to the  involution $\mu$ on $S^1\times S^3$ via $e\times n$.
\vspace{2mm}\\
(2)  Lemma \ref{II'c-lg} gives an equivariant  diffeomorphism $\hat\lg:  H_{f_{|\alpha|}} \to  H_f$ induced by $\lg:W\to W$. A direct computation gives  
$$
\lg\circ (c\circ f_{|\alpha|}^{\frac{1}{2}})\circ\lg^{-1}=c'\circ f^{\frac{1}{2}},
$$
which implies $\hat\lg\circ \sigma_{f_{|\alpha|}}\circ\hat\lg^{-1}=\sigma_f$. Therefore $(H_f,\sigma_f)$ is equivariantly diffeomorphic to $(H_{f_{|\alpha|}}, \sigma_{f_{|\alpha|}})$, which belongs to the class considered above.
\vspace{2mm}\\
(3) Let $f=f_\alpha$ with $\alpha\in (-1,0)$. As in the even case,  put $g\edf f^2=f_{\alpha^2}$, note that by Lemma \ref{ft-F}, the hypersurface $\Sigma$ associated with  $g$  coincides with $S^3$,  and consider the diffeomorphism $\tilde G:(\R/\Z)\times S^3\to H_g$  induced by $G:\R\times S^3\to W$.

 Let $\mg:\R\times S^3\to \R\times S^3$ be the diffeomorphism defined by 
$$
\mg (t,Z)\edf \big(t+\frac{1}{4},JiZ\big).$$
The induced map $\tilde \mg: (\R/\Z)\times S^3\to (\R/\Z)\times  S^3$ is a diffeomorphism of order 4 of $(\R/\Z)\times  S^3$  whose second power $\tilde\mg^2$ is given by the formula:
$$
\tilde \mg ^2([t],Z)=([t+\frac{1}{2}],-Z).
$$
Direct computations give
\begin{equation}\label{mJf1/2}
G\circ \mg =(J\circ f^{\frac{1}{2}})\circ G. 
\end{equation}

Recall that our primary Hopf surface  $H_f=W/\langle f\rangle $ is identified with the (free) quotient   of $H_g$ by the involution $\hat f$ induced by $f$ on $H_g$, and, via $\tilde G$, $\hat f$ corresponds to the involution 
$$
\jg'': (\R/\Z)\times  S^3\to (\R/\Z)\times  S^3,\ \jg''([t],Z)=([t+\frac{1}{2}],-Z).
$$
Therefore $\tilde G$ induces a diffeomorphism
$$
\hat G: \Qg''\edf \qmod{(\R/\Z)\times  S^3}{\langle\jg''\rangle}\to H_f.
$$
Formula (\ref{mJf1/2}) shows that, via $\hat G$, the involution $\sg_f$ on $H_f$ corresponds to the involution $\hat \mg $ induced by $\tilde \mg$ on $\Qg''$. Identifying $\R/\Z$ with $S^1$ and $\Qg''$ with $Q''$ in the usual way, we see that the involution $\hat m$ on $Q''$ which corresponds to $\hat \mg $ is given by
$$
\hat m([\zeta,Z])=[e^{i\frac{\pi}{2}}\zeta, JiZ].
$$

Via the diffeomorphism $\hat a''$ given by Lemma \ref{Q'Q''} the involution on $S^1\times S^3$ which corresponds to $\hat m$ is
$$
\mu':S^1\times S^3 \to S^1\times S^3,\ \mu'(\zeta, Z)=(-\zeta, i\bar Z).
$$
It suffices to note that $(\zeta,Z)\mapsto (\zeta,e^{-i\frac{\pi}{4}}Z)$ defines an equivariant diffeomorphism $(S^1\times S^3,\mu')\to (S^1\times S^3,\mu)$.

\end{proof}

\begin{re}\label{mu-mu0} Let $\mu_0:S^1\times S^3\to S^1\times S^3$ be the involution 
$$
\mu_0(\zeta,Z)=(-\zeta,Z).
$$
Let $\Phi$ be the $\R$-linear orientation preserving  isometry
$$
(z,w)\mapsto (\Re(z)+i\Re(w),\Im(z)+i\Im(w))
$$
and $\phi:S^3\to S^3$ the induced diffeomorphism of $S^3$. Let $\psi:S^1\times S^3\to S^1\times S^3$ be the diffeomorphism $(\zeta,(m,n))\mapsto (\zeta,(m,\zeta n))$. The composition $\psi\circ(\id\times\phi)$ is an equivariant diffeomorphism 
$$(S^1\times S^3,\mu)\to (S^1\times S^3,\mu_0).$$
  Therefore, in the classification Theorem \ref{ClassDiffOdd}, we may replace $\mu$ by $\mu_0$.
	
\end{re}
\begin{proof}
It suffices to note that, putting $\mu'(\zeta, (z,w) )\edf(-\zeta,(z,-w))$, we have
$$
(\id\times \phi)\circ \mu=\mu'\circ (\id\times \phi)
$$
and that $\mu_0\circ \psi=\psi\circ\mu'$. 	
\end{proof}

\subsection{The Real locus \texorpdfstring{$H^s$}{Hs} and the quotient \texorpdfstring{$H/\langle s\rangle$}{HHS}}

\subsubsection{The Real locus}\label{RealLocusSection}

Note first that the fixed point locus $M^\sigma$ of any involution $\sigma$ of a differentiable manifold $M$ is a submanifold of $M$. This follows by the equivariant  slice theorem (\cite[Theorem 5.6]{TTD}),	  which shows that an point $x\in X^\sigma$ has a $\sigma$-invariant open neighborhood which is equivariantly diffeomorphic to $(T_xX,\sigma_{*,x})$.

We have proved that any even Real structure on a primary Hopf surface $H_f$  with $f\in IV\cup III\cup II_a\cup II_b\cup II_c$ is equivalent to the standard Real structure $s_f$ (which is induced by the standard conjugation $c$ when $f$ is with real coefficients, and by $c'$ when $f\in II'_c$). Therefore, for describing the real locus of an arbitrary   even  Real primary Hopf surface, it suffices to consider only this standard Real structure.

 The fixed point locus $W^c$ (respectively $W^{c'}$) is 
$$
W^c=\R^2\setminus\{0\},\ W^{c'}=\{(z,\bar z)|\ z\in \C^*\}. 
$$
Let $f^{c}$, respectively $f^{c'}$ be the contraction induced by $f$ on $W^c$ (respectively $W^{c'}$).  The canonical maps
$$
\qmod{W^c}{\langle f^{c}\rangle }\to \left[\qmod{W}{\langle f\rangle}\right]^{s_f}=H_f^{s_f},\  \qmod{W^{c'}}{\langle f^{c'}\rangle }\to \left[\qmod{W}{\langle f\rangle}\right]^{s_f}=H_f^{s_f}
$$
are obviously diffeomorphisms. 

\begin{pr}
The fixed point locus $H_f^{s_f}$ of a standard even Real primary Hopf surface is 	diffeomorphic either to the torus $T^2$ or to a Klein bottle according to the sign of the determinant of the real part  of $f$. In particular, if $f\in II'_c$, then $H_f^{s_f}$ is a torus.
\end{pr}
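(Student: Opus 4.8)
The plan is to reduce everything to the three model involutions produced by Propositions \ref{SignsCoeff} and \ref{ClassDiffII'c} and to compute their fixed loci, since an equivariant diffeomorphism carries real locus to real locus, and since we already know $H_f^{s_f}\cong W^c/\langle f^c\rangle$ (resp. $W^{c'}/\langle f^{c'}\rangle$). When $f$ has real coefficients with both diagonal coefficients positive, or when $f\in II'_c$, Proposition \ref{SignsCoeff}(1) and Proposition \ref{ClassDiffII'c} give $(H_f,s_f)\simeq (S^1\times S^3,\tau)$; since $(S^3)^{\tau}=\{(u,v)\in S^3\mid u,v\in\R\}\cong S^1$, the $\tau$-fixed locus is $S^1\times S^1=T^2$, and in both situations the determinant of the real part of $f$ is positive (a product of two coefficients of the same sign, or $|\alpha|^2$ for $f\in II'_c$). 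This already settles the $II'_c$ assertion.

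For the remaining cases I would analyze the fixed loci of $\theta'$ on $Q'$ and of $\theta''$ on $Q''$. Lifting to the double cover $S^1\times S^3$ and using that $\zeta=-\zeta$ is impossible on $S^1$, one checks that the only $\theta'$-fixed (resp. $\theta''$-fixed) classes come from genuine $\tau$-fixed points, so each fixed locus is the image under $q'$ (resp. $q''$) of the torus $A\edf S^1\times (S^3)^{\tau}=S^1\times S^1$. Since $j'$ and $j''$ preserve $A$, this image is the quotient of $A$ by the free involution $j'|_A$ (resp. $j''|_A$). Writing $\zeta=e^{i\theta}$ and parametrizing $(S^3)^{\tau}$ by $(u,v)=(\cos\varphi,\sin\varphi)$, these involutions become
\[
j'|_A(\theta,\varphi)=(\theta+\pi,-\varphi),\qquad j''|_A(\theta,\varphi)=(\theta+\pi,\varphi+\pi).
\]

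The crux is distinguishing the torus from the Klein bottle, which is an orientation computation. Both involutions above are free (the shift $\theta\mapsto\theta+\pi$ has no fixed points), so each quotient is a closed surface with $\chi=0$, hence a torus or a Klein bottle, and it is a torus precisely when the involution preserves orientation. The Jacobian of $j'|_A$ is $\mathrm{diag}(1,-1)$, orientation-reversing, yielding a Klein bottle, whereas $j''|_A$ is an orientation-preserving translation, yielding $T^2$. Matching with Proposition \ref{SignsCoeff}, the model $\theta'$ occurs exactly when a single diagonal coefficient is negative, i.e. when the real part of $f$ has negative determinant, and $\theta''$ when both are negative, i.e. positive determinant; together with the positive-coefficient case this gives the stated dichotomy.

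Equivalently, and perhaps more transparently, one may argue directly on $W^c=\R^2\setminus\{0\}$: this is an orientable open annulus, $f^c$ generates a free properly discontinuous $\Z$-action with closed quotient, so $H_f^{s_f}=W^c/\langle f^c\rangle$ is orientable (a torus) iff $f^c$ preserves orientation, i.e. iff the determinant of the linear part of $f$ is positive, and non-orientable (a Klein bottle) otherwise; for $f\in II'_c$ the locus $W^{c'}$ is identified with $\C^*$ on which $f$ acts by $z\mapsto\alpha z$, an orientation-preserving contraction, so the quotient is a complex torus. The main point to get right is this orientation bookkeeping; the identification of the fixed sets themselves is routine.
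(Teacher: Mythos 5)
Your proposal is correct, and its primary route is genuinely different from the paper's. The paper's proof stays upstairs in $W$: it invokes Theorem \ref{ClassDiffEven} to reduce to the two model contractions $g(z,w)=(\tfrac12 z,\tfrac12 w)$ and $h(z,w)=(\tfrac12 z,-\tfrac12 w)$ and then simply observes that $W^c/\langle g^c\rangle$ is a torus while $W^c/\langle h^c\rangle$ is a Klein bottle. Your main argument instead works downstairs with the model involutions: you compute the fixed locus of $\tau$ on $S^1\times S^3$ and of $\theta'$, $\theta''$ on $Q'$, $Q''$ by lifting to the double cover, reduce to the quotient of the torus $A=S^1\times(S^3)^\tau$ by $j'|_A$ resp.\ $j''|_A$, and decide torus versus Klein bottle by the orientation of that free involution; the checks ($\zeta=-\zeta$ impossible, $j'|_A$ and $j''|_A$ given by $(\theta,\varphi)\mapsto(\theta+\pi,-\varphi)$ and $(\theta+\pi,\varphi+\pi)$) are all correct, and this route has the added benefit of exhibiting where the real locus sits inside the models of Lemma \ref{Q'Q''}. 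Your closing alternative — orientability of $W^c/\langle f^c\rangle$ is equivalent to $f^c$ preserving orientation, i.e.\ to positivity of the determinant of the (constant-sign) Jacobian of $f^c$ — is essentially the paper's computation, but stated for general $f$ without reducing to $g$ and $h$; it is the cleanest way to see why the dichotomy is governed precisely by the sign of the determinant, and it handles $II'_c$ uniformly since $z\mapsto\alpha z$ on $\C^*$ has determinant $|\alpha|^2>0$. Both of your arguments are sound; no gaps.
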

\begin{proof}
By the classification Theorem 	\ref{ClassDiffEven} we have only two equivariant diffeomorphism classes of even Real primary Hopf surface. The proof of this theorem  shows  any  even Real primary Hopf surface  is equivariantly diffeomorphic   to either  $(H_g, s_g)$  or   $(H_h, s_h)$, where $g$, $h:W\to W$ are the contractions defined by 
$$
g(z,w)=\big(\frac{1}{2}z, \frac{1}{2}w),\  h(z,w)=\big(\frac{1}{2}z, -\frac{1}{2}w).
$$
Moreover, this proof also shows that if $f\in II'_c$, then $(H_f, s_f)$ is equivariantly diffeomorphic   to $(H_g, s_g)$.  It is easy to see that the quotient   $W^c/\langle g^c \rangle$ is a torus, whereas the quotient $W^c/\langle h^c\rangle$ is a Klein bottle.

\end{proof}

Note that, for $f\in II'_c$, the projections $(z,\bar z)\mapsto z$,  $(z,\bar z)\mapsto \bar z$ induce identifications 
$$\qmod{W^{c'}}{\langle f^{c'}\rangle}\textmap{\simeq} \qmod{\C^*}{\langle \alpha\rangle},\  \qmod{W^{c'}}{\langle f^{c'}\rangle}\textmap{\simeq} \qmod{\C^*}{\langle \bar \alpha\rangle},$$
where $\alpha$, $\bar\alpha$ are the coefficients of $f$. This shows that, in this case, the Real locus $H_f^{s_f}$ comes with a canonical (non-oriented) conformal structure, which is conformally isomorphic to the elliptic curve $E_\alpha\edf {\C^*}/{\langle \alpha\rangle}$. In other words:
\begin{re}
 When $f$ belongs to  the subclass $II'_c$, the real locus $H_f^{s_f}$ is a 2-torus which comes with a natural (non-oriented) conformal structure. 	
\end{re}

For the odd Real structures we have:
\begin{re}
The Real locus of any  odd Real primary Hopf surface	 is empty.
\end{re} 

\begin{proof}
Let $(H_f,\sigma)$ be an odd Real Hopf surface and $\hat\sigma\in Aut_h(W)$	be a lift of $\sigma$ to $W$ such that $\hat\sigma^2=f$. Suppose that $x=[(z,w)]\in  H_f$ is a fixed point of $\sigma$. Therefore there exists $k\in \Z$ such that $\hat\sigma(z,w)=f^k(z,w)$. Therefore $\hat\sigma^{2k-1}(z,w)=(z,w)$, which implies   $\hat\sigma^{2(2k-1)}(z,w)=\hat\sigma^{2k-1}(z,w)=(z,w)$. We obtain $f^{2k-1}(z,w)=(z,w)$. Since $\langle f\rangle$ acts freely on $W$, it follows $2k-1=0$ (contradiction). 
\end{proof}

\subsubsection{The quotient of a Real Hopf surface by its involution}

Note first that the quotient $X/\langle \sigma\rangle$ associated with {\it any} Real complex surface $(X,\sigma)$  is  a topological 4 manifold. This follows using
\begin{enumerate}
\item  The equivariant  slice theorem quoted above.
\item The classification of anti-linear involutions on a complex vector space (see for instance Remark \ref{linearRealRem} in this article).
\item The homeomorphism $\R^2/\langle -\id_{\R^2}\rangle\simeq\R^2$ induced (for instance) by the $-\id_{\R^2}$-invariant  map 
$$\beta:\R^2\to\R^2,\ \beta(\rho\cos(\theta),\rho\sin(\theta))=(\rho^2\cos(2\theta),\rho^2\sin(2\theta)).$$
Using a complex coordinate $\zeta$ on $\R^2$, this map is given by $\zeta\mapsto \zeta^2$.
\end{enumerate}

Taking into account this remark we will describe the quotient associated with a Real  primary Hopf surface as a topological 4-manifold.

We have seen that any even Real primary Hopf surface is isomorphic to $(H_f,s_f)$, where either $f\in IV\cup III\cup II_a\cup II_b\cup II_c$ with real coefficients, or $f\in II'_c$. The involution $s_f$ is induced by the {\it anti-linear} involution  $c$, respectively $c'$ on $W$.

The quotient
$$
Q_f\edf \qmod{H_f}{\langle s_f\rangle }
$$
can be identified with the quotient $\Wg\edf W/\langle c	\rangle$, respectively $\Wg'\edf W/\langle c'	\rangle$ by the contraction $\fg$ induced by $f$ on $\Wg$, respectively $\Wg'$. 

We obtain  a  decomposition  $\C^2=V_+\oplus V_-$ of $\C^2$ as direct sum of $c$-invariant (respectively  $c'$-invariant) 2-dimensional real linear subspaces $V_\pm$ such that $c|_{V_\pm }=\pm\id_{V_\pm}$ (respectively $c'|_{V_\pm }=\pm\id_{V_\pm}$).

Therefore the quotient $\C^2/\langle c\rangle$ ($\C^2/\langle c'\rangle$) can be identified with  the quotient
$$V_+\times \qmod{V_-}{\langle-\id_{V_-}\rangle}\simeq \R^4.$$

Here we have used   linear isomorphisms $V_\pm\simeq\R^2$ and the homeomorphism 
$$\R^2/\langle-\id_{\R^2}\rangle\to \R^2$$
 induced by $\beta$. The image of $W/\langle c\rangle$ ($W/\langle c'\rangle$) via this homeomorphism is $\R^4\setminus\{0\}$ and the image of the fixed point locus $W^c$ ($W^{c'}$) in $\Wg$ ($\Wg'$)  is $(\R^2\setminus\{0\})\times\{0\}$. Using the classification Theorem \ref{ClassDiffEven} we see that in all cases the contraction ${\cal F}$ induced by $f$ on $W/\langle c\rangle$ ($W/\langle c'\rangle$) is orientation preserving. 
 Therefore we obtain the following result, which describes the quotient $Q_f$ associated with an even Real primary Hopf surface, as well  as the image of the fixed point locus $H_f^{s_f}$ in this quotient:
\begin{pr} With the notations above we have
\begin{enumerate}
\item 	The quotient $Q_f\edf  {H_f}/{\langle s_f\rangle}$ can be  identified topologically with the quotient 
$$
{\cal Q}_f=\qmod{\R^4\setminus\{0\}}{\langle {\cal F}\rangle }
$$
of $\R^4\setminus\{0\}$ by the cyclic group generated by an  orientation preserving  contraction ${\cal F}$.
Therefore, in all cases   $Q_f$ is homeomorphic to $S^1\times S^3$.
\item ${\cal F}$ leaves invariant the punctured plane $(\R^2\setminus\{0\})\times\{0\}$ and,  via the above identification, the fixed  point locus $H_f^{s_f}$ corresponds to the quotient of $\R^2\setminus\{0\}$ by the contraction ${\cal F}_0$ induced by ${\cal F}$. 
\item ${\cal F}_0$ is orientation preserving if and only if the diagonal coefficients of $f$ have the same sign. If this is the case $\R^2\setminus\{0\}/\langle {\cal F}_0\rangle$ is a 2-torus. If the diagonal coefficients of $f$ have opposite signs, $\R^2\setminus\{0\}/\langle {\cal F}_0\rangle$ is a Klein bottle.
\end{enumerate}

\end{pr}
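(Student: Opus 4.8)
The plan is to assemble the identifications set up in the paragraphs preceding the statement and then to pin down two orientation signs that govern the global topology. For (1), recall that $Q_f=H_f/\langle s_f\rangle$ has already been identified with $\Wg/\langle\fg\rangle$, and that the splitting $\C^2=V_+\oplus V_-$ together with the map $\beta$ provides a homeomorphism $\C^2/\langle c\rangle\cong V_+\times\bigl(V_-/\langle-\id\rangle\bigr)\cong\R^4$ (and likewise for $c'$) carrying $W/\langle c\rangle$ onto $\R^4\setminus\{0\}$ and the image of $W^c$ onto $(\R^2\setminus\{0\})\times\{0\}$. Transporting $\fg$ through this homeomorphism yields $\cal F$, so $Q_f\cong\cal Q_f=(\R^4\setminus\{0\})/\langle\cal F\rangle$; since $f$ is a contraction and $\beta$ is proper with $\beta(0)=0$, the map $\cal F$ extends to a topological contraction of $\R^4$ with the origin as attracting fixed point.

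First I would verify that $\cal F$ is orientation preserving. Because $f$ has real coefficients (resp. $f\in II'_c$ commutes with $c'$), its extension to $\C^2$ commutes with the relevant anti-linear involution and hence preserves both eigenspaces $V_\pm$; write $f_\pm$ for the restrictions. As $f$ is holomorphic it preserves the orientation of $\R^4$, so $\det_\R f_+$ and $\det_\R f_-$ have the same sign. The map $\beta$ is an orientation-preserving local diffeomorphism away from $0$ and intertwines $f_-$ with the induced map $\overline{f_-}$ on $V_-/\langle-\id\rangle$, so $\overline{f_-}$ has the same orientation behaviour as $f_-$; therefore $\cal F=f_+\times\overline{f_-}$ preserves orientation. (Equivalently, this can be read off the two model contractions supplied by Theorem \ref{ClassDiffEven}.) It then remains to identify $(\R^4\setminus\{0\})/\langle\cal F\rangle$. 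Exhibiting $\cal Q_f$ as a fibre bundle over $S^1$ with fibre $S^3$ — as in the transversal-slice construction of Lemma \ref{ft-F} — its monodromy is an orientation-preserving self-map of $S^3$, which is isotopic to the identity since $\pi_0\mathrm{Diff}^+(S^3)$ is trivial; hence the bundle is trivial and $\cal Q_f\cong S^1\times S^3$. I expect this last topological identification to be the main obstacle, since it is where the low-dimensional input (triviality of orientation-preserving self-maps of $S^3$ up to isotopy) is needed and where one must make the bundle structure of the quotient rigorous uniformly in all classes.

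For (2), the same computation shows that $f$ preserves $V_+$, hence $\cal F$ leaves the image $(\R^2\setminus\{0\})\times\{0\}$ of $W^c=V_+\setminus\{0\}$ invariant; on $V_+$ the involution $c$ (resp. $c'$) acts trivially, so the induced contraction $\cal F_0$ coincides with the contraction $f^c$ of $W^c$ studied in Section \ref{RealLocusSection}, and $H_f^{s_f}=W^c/\langle f^c\rangle$ corresponds to $(\R^2\setminus\{0\})/\langle\cal F_0\rangle$. For (3), the Jacobian of $f^c$ is triangular with the two diagonal coefficients of $f$ on its diagonal, so $\cal F_0$ is orientation preserving exactly when these coefficients have the same sign, i.e. when the determinant of the real part of $f$ is positive. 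Finally $(\R^2\setminus\{0\})/\langle\cal F_0\rangle$ is a closed surface fibring over $S^1$ with fibre $S^1$; it is orientable — hence a torus — precisely when $\cal F_0$ preserves orientation, and non-orientable — hence a Klein bottle — otherwise, which recovers the statement proved in Section \ref{RealLocusSection}.
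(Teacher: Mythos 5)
Your overall route is the paper's: identify $Q_f$ with $\Wg/\langle\fg\rangle$, use the splitting $\C^2=V_+\oplus V_-$ and the map $\beta$ to get $\R^4\setminus\{0\}$, transport $f$ to $\cal F$, and then settle two orientation signs. However, your primary argument for the orientation‑preservation of $\cal F$ has a genuine gap. You claim that since $f$ commutes with $c$ it ``preserves both eigenspaces $V_\pm$'' and that $\cal F=f_+\times\overline{f_-}$. Commuting with $c$ only forces $f$ to preserve $V_+=\mathrm{Fix}(c)$; it preserves $V_-=\{v:\ c(v)=-v\}$ only when $f$ is odd, e.g.\ linear. For $f\in II_a$ with $f(z,w)=(\delta^rz+w^r,\delta w)$ and $r$ even one has $f(ix,iy)=(\delta^rix+i^ry^r,\delta iy)$ with $i^ry^r\in\R$, so $f(V_-)\not\subset V_-$; and even for $r$ odd the term $w^r=(y+i\eta)^r$ mixes the $V_+$- and $V_-$-components, so $f$ is not a product map with respect to $V_+\oplus V_-$ and the formula $\cal F=f_+\times\overline{f_-}$ fails. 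The conclusion (that $\cal F$ preserves orientation) is nevertheless correct, and your parenthetical fallback --- reading it off the two model contractions supplied by Theorem \ref{ClassDiffEven} --- is exactly the argument the paper uses; that is the one you should promote to the main line, discarding the product decomposition.

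Two secondary remarks. First, your fibre-bundle/monodromy argument for $\cal Q_f\cong S^1\times S^3$ supplies a step the paper leaves implicit, but note that $\beta$ is only a homeomorphism (not a diffeomorphism) along $V_+\times\{0\}$, so $\cal F$ is a priori only a topological contraction near the fixed plane and the ``bundle'' is only topological; the cleanest repair is again to invoke Theorem \ref{ClassDiffEven} and compute the quotients of the explicit models $(S^1\times S^3,\tau)$ and $(S^1\times S^3,\tau')$ directly, rather than appealing to $\pi_0\mathrm{Diff}^+(S^3)$. Second, parts (2) and (3) of your proposal are fine and agree with the paper (on $V_+$ the quotient map is the identity, so $\cal F_0=f^c$ and the sign of its triangular Jacobian is the product of the diagonal coefficients), with the minor caveat that for $f\in II'_c$ the ``sign of the diagonal coefficients'' statement must be replaced by the observation that $z\mapsto\alpha z$ is holomorphic, hence orientation preserving, giving a torus.
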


Taking into account the classification Theorem \ref{ClassDiffOdd} and Remark \ref{mu-mu0} we obtain the following simple description of the quotient $H/\langle\sigma\rangle$ of any odd Real primary Hopf surface $(H,\sigma)$:
\begin{pr}
Let $(H,\sigma)$ be an 	 odd Real primary Hopf surface. The quotient $H/\langle\sigma\rangle$ can be identified with $S^1\times S^3$ and the canonical projection
$$
H\to H/\langle\sigma\rangle
$$
is a double cover whose (non-trivial) deck transformation is an anti-holomorphic involution.
\end{pr}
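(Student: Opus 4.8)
The plan is to reduce everything to the explicit model furnished by the classification theorem and then to read off the quotient and the covering directly. First I would invoke Theorem \ref{ClassDiffOdd} together with Remark \ref{mu-mu0}: any odd Real primary Hopf surface $(H,\sigma)$ is equivariantly diffeomorphic to $(S^1\times S^3,\mu_0)$, where $\mu_0(\zeta,Z)=(-\zeta,Z)$. An equivariant diffeomorphism $(H,\sigma)\to (S^1\times S^3,\mu_0)$ intertwines $\sigma$ with $\mu_0$, hence descends to a homeomorphism of quotients $H/\langle\sigma\rangle\to (S^1\times S^3)/\langle\mu_0\rangle$ and identifies the two covering projections. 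Thus it suffices to analyze the model.

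Next I would observe that $\mu_0$ acts freely: since $-\zeta\ne\zeta$ for every $\zeta\in S^1$, the order-two group $\langle\mu_0\rangle$ has no fixed points, and being finite it acts properly discontinuously. Consequently the projection $S^1\times S^3\to (S^1\times S^3)/\langle\mu_0\rangle$ is a regular double cover, and transporting this back through the equivariant diffeomorphism shows that $H\to H/\langle\sigma\rangle$ is a double cover. To identify the quotient I note that $\mu_0$ acts only on the first factor, by the antipodal-type map $\zeta\mapsto-\zeta$ on $S^1$, and trivially on $S^3$; therefore
$$
(S^1\times S^3)/\langle\mu_0\rangle\simeq \big(S^1/\langle\zeta\mapsto-\zeta\rangle\big)\times S^3.
$$
Since the squaring map $\zeta\mapsto\zeta^2$ realizes $S^1/\langle\zeta\mapsto-\zeta\rangle$ as $S^1$ again, we conclude $H/\langle\sigma\rangle\simeq S^1\times S^3$, as claimed.

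Finally, the deck group of the double cover $H\to H/\langle\sigma\rangle$ is exactly $\langle\sigma\rangle=\{\id_H,\sigma\}$, so its unique non-trivial element is $\sigma$ itself; but $\sigma$ is by hypothesis a Real structure on $H$, that is, an anti-holomorphic involution. There is no genuine obstacle here, the whole statement being an immediate consequence of the explicit model. The only point that deserves a line of verification is that the equivariant diffeomorphism truly descends to the quotients and respects the covering projections, which is automatic precisely because it intertwines $\sigma$ with $\mu_0$.
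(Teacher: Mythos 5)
Your proposal is correct and follows essentially the same route as the paper: reduce to the model $(S^1\times S^3,\mu_0)$ via Theorem \ref{ClassDiffOdd} and Remark \ref{mu-mu0}, note that $\mu_0$ acts freely on the first factor only, and identify $S^1/\langle\zeta\mapsto-\zeta\rangle$ with $S^1$ by squaring. The observation that the non-trivial deck transformation is $\sigma$ itself, hence anti-holomorphic by hypothesis, completes the argument exactly as intended.
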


%\bibliographystyle{plain}
%\bibliography{BibHopf.bib}

\end{document}